\newtheorem{theorem}{Theorem}[section]
\newtheorem{Proposition}[theorem]{Proposition}
\newtheorem{Lemma}[theorem]{Lemma}
\newtheorem{proposition}[theorem]{Proposition}
\newtheorem{corollary}[theorem]{Corollary}
\newtheorem{lemma}[theorem]{Lemma}
\newtheorem{conjecture}[theorem]{Conjecture}
\newtheorem{definition}[theorem]{Definition}
\newtheorem{example}[theorem]{Example}
\newtheorem{problem}{Problem}
\def\sym#1{\mathrm{Sym}(#1)}
\def\c#1{\mathrm{con}_{#1}}
\def\comment#1{{\color{red} #1}}
\def\aut#1{\mathrm{Aut}(#1)}
\def\End#1{\mathrm{End}(#1)}
\def\aff#1{\mathrm{Aff}#1}
\def\Aff#1{\mathrm{Aff}#1}
\def\lmlt{\mathrm{LMlt}}
\def\N{Norm}
\def\dis{\mathrm{Dis}}
\def\Q{\mathcal{Q}}
\def\setof#1#2{\{#1\, : \,#2\}}
\newcommand*\xbar[1]{%
   \hbox{%
     \vbox{%
       \hrule height 0.5pt 
       \kern0.5ex
       \hbox{%
         \kern-0.1em
         \ensuremath{#1}%
         \kern-0.1em
       }%
     }%
   }%
}
\def\m{\mathfrak{ip} }
\title{Superconnected left quasigroups and involutory quandles}
\author{M. Bonatto}
\address[M. Bonatto]{Dipartimento di matematica e informatica - UNIFE}
\email{marco.bonatto.87@gmail.com}
\begin{document}


	\begin{abstract}
	In this paper we study the classes of superconnected and superfaithful left quasigroups, that are relevant in the study of Mal'cev varieties of left quasigroups \cite{Maltsev_paper}. Then we focus on quandles and in particular to the involutory ones. We extend the main result of \cite{involutive_quandles_russo} to the infinite case and we offer a characterization of several classes of involutory quandles in terms of the properties of the canonical generators of the displacement group, improving the main results of \cite{Nobu}.
	\end{abstract}
	
	\subjclass[2020]{20N02, 16T25, 57M27.}

\maketitle

\section*{Introduction}

 Algebraic structure of interest in many areas of mathematics often have an underlying {\it left quasigroup} structure. Examples are {\it quandles} that arise in low dimensional topology \cite{J, Matveev}  and the algebraic structure related to the solution of the {\it Yang-Baxter equation} \cite{Rump, Rumples}. The goal of this paper is to keep developing some tools for understanding left quasigroups as started in \cite{semimedial}. In this paper we study the class of {\it superfaithful} and the class of {\it superconnected} left quasigroups. Such notions arise naturally in the framework of {\it Mal'cev conditions} for left quasigroups that we study in a separate paper \cite{Maltsev_paper}.

In some sense superfaithful and superconnected left quasigroups are close to {\it quasigroups}. Indeed latin left quasigroups (i.e. left quasigroup reducts of quasigroups) are superfaithful and connected and the finite ones are also superconnected (the converse is not true). On the other hand, superconnected left quasigroups have a Mal'cev term \cite{Maltsev_paper}.

For quandles, the property of being {\it connected} is topologically relevant (as connected quandles provide knot invariants). The results of this paper and of \cite{Maltsev_paper} suggests that such property is relevant also from an algebraic viewpoint. Indeed, several results on finite latin quandles can be extended to the class of superconnected quandles. For instance, the commutator theory in the sense of \cite{comm} is particularly well-behaved in this class (see Proposition \ref{commutator for superconnected}). In some cases, superconnected quandles are indeed latin, as the nilpotent (see Theorem \ref{nilpotence}) and the {\it involutory} ones (Theorem \ref{involutory superfaith} improves the main result of \cite{involutive_quandles_russo} and partially the main result of \cite{Nobu} that were limited to the finite case). 

Involutory quandles encode the notion of {\it symmetric space} as defined in \cite{Loos} and they are also related to {\it Bruck loops} \cite{Stanos,Petr2}. In the last Section we show that some properties of involutory quandles are determined by the properties of the canonical generators of the {\it displacement group} partially inspired by \cite{Nobu} (see Theorem \ref{involutory superfaith} and Theorem \ref{reductive involutory 2}). As a byproduct we obtain some group theoretical applications on finite groups generated by a conjugacy class of involutions (see Corollaries \ref{group1} and \ref{group2}).

The paper is organized as follows: in Section \ref{Sec:left} we collect all the basic definitions needed in the sequel of the paper, and in \ref{Sec:connected} and \ref{Sec:Idempotent} we collect some basic results on connected and idempotent left quasigroups, respectively (including two characterization of superconnected left quasigroups in Lemma \ref{2 gen involutory} and Corollary \ref{cor_superconnected}). Section \ref{Sec:Racks} is dedicated to racks and quandles. In Section \ref{Sec:superfaithful} we show some construction of (infinite families of) superfaithful quandles and in Sections \ref{Sec:Superconnected} and \ref{Sec:commutator} we explore superconnected quandles. We conclude the paper with Section \ref{Sec:Involutory} about involutory quandles.

We used the software Prover9 \cite{Prover9} to compute some of the examples appearing in the paper and the \cite{RIG} library of GAP as a source of concrete examples.

\section*{Acknowledgments}
The author is grateful to Prof. Gast\'{o}n Garc\'{i}a for the fruitful discussion in La Plata around the notion of superconnected quandle and related problems. The author would also like to thank Prof. David Stanovsk\'{y} for pointing him out the relevant paper \cite{Nobu}.

\section{Preliminary results}

%

\subsection{Left quasigroups}\label{Sec:left}

A left quasigroup is a binary algebraic structure $(Q,\ast,\backslash)$ such that the identities
\begin{displaymath}
x\backslash (x\ast y)\approx y\approx x\ast (x\backslash y)
\end{displaymath}
hold, i.e. the left multiplications $L_a:b\mapsto a\ast b$ are bijective for every $a\in Q$. The dual notion of right quasigroup is defined analogously. The left multiplication group of $Q$ is $\lmlt(Q)=\langle\setof{L_a}{a\in Q}\rangle$.  \\
We denote by $\textbf{H}(Q)$, $\textbf{S}(Q)$ and $\textbf{P}(Q)$ respectively the set of isomorphism classes of homomorphic images, subalgebras and powers of the left quasigroup $Q$. Let $X$ be a subset of $Q$, we denote by $Sg(X)$ the smallest subalgebra of $Q$ containing $X$.

A {\it congruence} of a left quasigroup $Q$ is a equivalence relation $\alpha$ such that the implication
\begin{equation}\label{cong}
a\,\alpha\, b\text{ and } c\,\alpha\, d\, \Rightarrow\, (a*c)\,\alpha\, (b*d) \text{ and } (a\backslash c)\, \alpha\, (b\backslash d)
\end{equation}
holds for every $a,b,c,d\in Q$. Congruences and homomorphic images are essentially the same thing because of the second isomorphism theorem for arbitrary algebraic structures \cite{UA}. Indeed if $\alpha$ is a congruence, the operations
$$[a]_\alpha * [b]_\alpha=[a*b]_\alpha\, \quad [a]_\alpha \backslash  [b]_\alpha=[a\backslash b]_\alpha$$
for every $[a]_\alpha, [b]_\alpha\in Q/\alpha$ are well-defined by virtue of \eqref{cong} and  the quotient set $Q/\alpha$ is a left quasigroup with respect to such operations. On the other hand if $h:Q\mapsto Q'$ is a left quasigroup homomorphism, then $\ker{h}=\setof{(a,b)\in Q^2}{h(a)=h(b)}$ is a congruence of $Q$ and $Im(h)\cong Q/\ker{h}$. The congruences of $Q$ form a lattice denoted by $Con(Q)$ with minimum $0_Q=\setof{(a,a)}{a\in Q}$ and maximum $1_Q=Q\times Q$. If $\alpha$ is a congruence of $Q$, the congruence lattice of $Q/\alpha$ is given by $\setof{\beta/\alpha}{\alpha\leq\beta \in Con(Q)}$, where 
$$[a]_\alpha\, \beta/\alpha\, [b]_\alpha\, \text{ if and only if } \, a\,\beta\, b.$$
Moreover, the mapping
\begin{displaymath}
\pi_{\alpha}:\lmlt (Q)\longrightarrow \lmlt(Q/\alpha),\quad L_{a_1}^{k_1}\ldots L_{a_n}^{k_n} \mapsto L_{[a_1]}^{k_1}\ldots L_{[a_n]}^{k_n},
\end{displaymath}
is a well defined surjective homomorphism of groups (see \cite[Lemma 1.8]{AG} for racks and \cite{CP} for left quasigroups). Moreover,
\begin{equation}\label{formula for pi alpha}
[h(a)]_\alpha=\pi_\alpha(h)([a]_\alpha)
\end{equation}
holds for every $a\in Q$ and every $h\in \lmlt(Q)$.

The {\it displacement group relative to a congruence $\alpha$} is the smallest normal subgroup of $\lmlt(Q)$ containing $\setof{L_a L_b^{-1}}{a\,\alpha\,b}$ (see \cite[Section 3.1]{CP}) i.e. 
$$\dis_\alpha=\langle h L_a L_b^{-1} h^{-1}, \, a\,\alpha \, b, \, h\in \lmlt{(Q)}\rangle.$$
For $\alpha=1_Q$ we denote the relative displacement group as $\dis(Q)$ and we call it the {\it displacement group of $Q$}. 
\begin{lemma}\cite[Lemma 1.4]{semimedial}\label{disQ combinatorial} 
Let $Q$ be a left quasigroup. Then
\begin{equation*}
\dis(Q)=\setof{L_{x_1}^{k_1},\ldots L_{x_n}^{k_n}}{x_1,	\ldots, x_n\in Q,\,\sum_{i=1}^n k_i=0}
\end{equation*}
and in particular $\lmlt(Q)=\dis(Q)\langle L_a\rangle$ for every $a\in Q$.
\end{lemma}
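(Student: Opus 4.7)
The plan is to establish the displayed equality by double inclusion, after which the factorization $\lmlt(Q)=\dis(Q)\langle L_a\rangle$ drops out as a corollary. Denote by $D$ the set on the right-hand side, so that an element of $\lmlt(Q)$ lies in $D$ exactly when it admits at least one expression as a product of left multiplications in which the exponents sum to zero.

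First I would check that $D$ is a normal subgroup of $\lmlt(Q)$. Closure under multiplication and inversion is immediate: concatenating two such expressions preserves the vanishing of the total exponent sum, and reversing the word while negating every exponent preserves it as well. For normality, observe that conjugation by any single $L_c$ inserts a $+1$ and a $-1$ into the exponent sum of any expression, so $L_c D L_c^{-1}\subseteq D$; since the $L_c$ generate $\lmlt(Q)$, $D$ is normal in $\lmlt(Q)$. Because $L_aL_b^{-1}\in D$ for every $a,b\in Q$ (the exponent sum is $1-1=0$), the minimality of $\dis(Q)$ among the normal subgroups of $\lmlt(Q)$ containing these elements gives the inclusion $\dis(Q)\subseteq D$.

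For the reverse inclusion I would pass to the quotient $G:=\lmlt(Q)/\dis(Q)$. By the very definition of $\dis(Q)$, the element $L_xL_a^{-1}$ is trivial in $G$ for every $x\in Q$, so every canonical generator $L_x$ has the same image $\overline{L_a}$ in $G$. Consequently $G=\langle\overline{L_a}\rangle$ is cyclic, and for any element $h=L_{x_1}^{k_1}\cdots L_{x_n}^{k_n}$ the image $\overline{h}$ equals $\overline{L_a}^{\,k_1+\cdots+k_n}$. Specialising to $h\in D$ yields $\overline{h}=\overline{L_a}^{\,0}=\overline{1}$, hence $h\in\dis(Q)$ and $D\subseteq\dis(Q)$. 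The very same computation proves the in-particular clause: for any $h\in\lmlt(Q)$ written as $L_{x_1}^{k_1}\cdots L_{x_n}^{k_n}$, the element $hL_a^{-\sum k_i}$ lies in $\dis(Q)$, so $h\in\dis(Q)\langle L_a\rangle$; the opposite inclusion being trivial, equality follows.

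I do not expect any genuine obstacle. The only point deserving a moment of care is that membership in $D$ is defined through the \emph{existence} of some expression of zero exponent sum, not through any well-definedness of an exponent-sum function on $\lmlt(Q)$; consequently the closure and normality verifications should be phrased as explicit constructions of new zero-sum expressions rather than as invariance statements. With that perspective the argument is a soft quotient-group computation, avoiding any combinatorial manipulation of the left multiplications themselves.
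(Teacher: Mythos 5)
Your argument is correct: you verify that the zero-exponent-sum set is a normal subgroup containing the generators $L_aL_b^{-1}$, and obtain the reverse inclusion from the fact that $\lmlt(Q)/\dis(Q)$ is cyclic on the common image of the $L_x$. The paper only cites this lemma from \cite{semimedial}, and your proof is essentially the standard argument given there, with the in-particular clause following exactly as you describe.
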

If $\alpha, \beta$ are congruences of a left quasigroup $Q$ and $\alpha\leq \beta$, the image of $\dis_\beta$ under $\pi_\alpha$ is $\dis_{\beta/\alpha}$ and in particular the restriction of $\pi_\alpha$ to $\dis (Q)$ gives a surjective homomorphism $\dis (Q)\to\dis(Q/\alpha)$. The kernels of $\pi_\alpha$ and of its restriction will be denoted respectively by $\lmlt^\alpha$ and $\dis^\alpha$. The set-wise block stabilizers in $\lmlt(Q)$ is the subgroup $\lmlt(Q)_{[a]_\alpha}=\setof{h\in \lmlt(Q)}{h([a]_\alpha)=[a]_\alpha}$ (and similarly  $\dis(Q)_{[a]_\alpha}=\setof{h\in \dis(Q)}{h([a]_\alpha)=[a]_\alpha}$). Note that both $\lmlt(Q)_a$ and $\lmlt^\alpha$ are contained in $\lmlt(Q)_{[a]_\alpha}$ (and the same is true for $\dis(Q)_a$, $\dis^\alpha$ and $\dis(Q)_{[a]_\alpha}$).

The {\it Cayley kernel} of a left quasigroup $Q$ is the equivalence relation $\lambda_Q$ defined  as
$$a\,\lambda_Q\, b \quad \text{ if and only if } \quad L_a=L_b.$$ 
In general, the equivalence $\lambda_Q$ is not a congruence. If $\lambda_Q=0_Q$ then $Q$ is called {\it faithful} and if all subalgebras of $Q$ are faithful we say that $Q$ is {\it superfaithful}. In particular, if $Q/\alpha$ is faithful, then $\lambda_Q\leq \alpha$ (indeed, according to \eqref{formula for pi alpha} if $L_a=L_b$ then $L_{[a]}=L_{[b]}$). If $\lambda_Q=1_Q$, i.e. $a*b=f(b)$ for every $a,b\in Q$ where $f\in \sym{Q}$, then $Q$ is called {\it permutation left quasigroup} and denoted by $(Q,f)$. If $f$ is the identity mapping then $a*b=b$ for every $a,b\in Q$ i.e. $Q$ is a {\it projection} left quasigroup. We denote by $\mathcal{P}_n$ the projection left quasigroup of size $n$ and we call {\it trivial left quasigroup} the one-element projection left quasigroup.


A {\it quasigroup} is an algebra $(Q,\ast,\backslash,/)$ such that $(Q,\ast,\backslash)$ is a left quasigroup (the {\it left quasigroup reduct} of $Q$) and $(Q,\ast,/)$ is a right quasigroup, i.e. also the right multiplications $R_a:b\mapsto b\ast a$ are bijective for every $a\in Q$. A left quasigroup is  {\it latin} if it is the left quasigroup reduct of a quasigroup (in the finite case its multiplication table is a latin square). Note that congruences and subalgebras of a quasigroup and of its left quasigroup reduct might be different since we are considering a different signatures. Nevertheless they coincide in the finite case, since the two algebraic structures are term equivalent. We introduce this rather technical distinction in order to make clear that the results of the paper are tied to the choice of the left quasigroup signature (this detail will be more relevant in the related paper \cite{Maltsev_paper}).

Latin left quasigroups are superfaithful. Indeed if $Q$ is a latin left quasigroup and $a*x=b*x$ for some $a,b,x\in Q$ then $a=b$.

A left quasigroups $Q$ is said to be {\it idempotent} if $x*x\approx x$ holds and {\it involutory} if $x*(x*y)\approx y$ holds.

Let $(A,+)$ be an abelian group, $g\in \End{A}$, $f\in \aut{A}$ and $c\in A$. We denote by $\Aff(A,g,f,c)$ the left quasigroup $(A,\cdot)$ where $x\cdot y=g(x)+f(y)+c$ and we call such left quasigroup {\it affine} over $A$. If $\Aff(A,f,g,c)$ is idempotent, then necessarily $c=0$ and $g=1-f$, so we denote it just by $\aff(A,f)$.

\subsection{Connected left quasigroup}\label{Sec:connected}
In this section we introduce the classes of connected and superconnected left quasigroups.
\begin{definition}\label{Def: Homogeneous Connected}
A left quasigroup $Q$ is said to be: 
\begin{itemize}
\item[(i)] {\it connected} if $\lmlt(Q)$ acts transitively on Q.
\item[(ii)] {\it Superconnected} if every subalgebra of $Q$ is connected.
\end{itemize}
\end{definition}

The following is a criterion for connectedness for left-quasigrops. The proof of the same criterion for racks stated in \cite[Proposition 1.3]{GB} can be employed for left quasigroups.
\begin{lemma}\label{criterion for connectedness}
Let $Q$ be left quasigroup and $\alpha\in Con(Q)$. Then $Q$ is connected if and only if $Q/\alpha$ is connected and $\lmlt(Q)_{[a]_\alpha}$ is transitive on $[a]_\alpha$ for every $a\in Q$. 
\end{lemma}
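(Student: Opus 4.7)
The plan is to prove the two directions separately, using formula \eqref{formula for pi alpha} and the surjectivity of $\pi_\alpha$ (both established earlier in Section \ref{Sec:left}) as the essential tools.

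For the forward direction, assume $Q$ is connected. First I would verify that $Q/\alpha$ is connected: given two blocks $[a]_\alpha, [b]_\alpha$, pick $h\in\lmlt(Q)$ with $h(a)=b$, and note that $\pi_\alpha(h)([a]_\alpha)=[h(a)]_\alpha=[b]_\alpha$ by \eqref{formula for pi alpha}, while $\pi_\alpha(h)\in\lmlt(Q/\alpha)$ since $\pi_\alpha$ lands in $\lmlt(Q/\alpha)$. For the block-stabilizer transitivity, pick $a'\in[a]_\alpha$ and choose $h\in\lmlt(Q)$ with $h(a)=a'$; then again $h([a]_\alpha)=[h(a)]_\alpha=[a']_\alpha=[a]_\alpha$, so $h\in\lmlt(Q)_{[a]_\alpha}$ witnesses transitivity on the block.

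For the backward direction, assume $Q/\alpha$ is connected and every block stabilizer is transitive on its block. Given $a,b\in Q$, connectedness of $Q/\alpha$ yields some $\bar h\in\lmlt(Q/\alpha)$ with $\bar h([a]_\alpha)=[b]_\alpha$. Using surjectivity of $\pi_\alpha$, lift to $h\in\lmlt(Q)$ with $\pi_\alpha(h)=\bar h$. By \eqref{formula for pi alpha}, $[h(a)]_\alpha=\pi_\alpha(h)([a]_\alpha)=[b]_\alpha$, so $h(a)$ and $b$ lie in the same block. By hypothesis, $\lmlt(Q)_{[b]_\alpha}$ is transitive on $[b]_\alpha$, so there is $g\in\lmlt(Q)_{[b]_\alpha}$ with $g(h(a))=b$. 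Then $gh\in\lmlt(Q)$ sends $a$ to $b$, proving $Q$ is connected.

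I do not expect any real obstacle here; the only point requiring a little care is that one must lift through $\pi_\alpha$ rather than work directly inside $\lmlt(Q/\alpha)$, but surjectivity of $\pi_\alpha$ (already recorded before the statement) handles this immediately. The proof is essentially a transcription of the standard orbit-of-blocks argument for group actions, adapted via \eqref{formula for pi alpha} to the left quasigroup setting.
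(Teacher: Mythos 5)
Your proof is correct and is exactly the standard orbit-of-blocks argument that the paper delegates to the cited rack version (\cite[Proposition 1.3]{GB}): both directions rest on the surjectivity of $\pi_\alpha$ and on formula \eqref{formula for pi alpha}, used precisely as you use them. No gaps; the only implicit step (that transitivity of the stabilizer on $[a]_\alpha$ follows from being able to move $a$ to any point of its block) is immediate since the stabilizer is a group.
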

 
The property of being superconnected is determined by the connectedness of the two-generated subalgebras.
\begin{lemma}\label{2gen superconnected}
Let $Q$ be a left quasigroup. The following are equivalent:
\begin{itemize}
\item[(i)] $Q$ is superconnected.
\item[(ii)] $Sg(a,b)$ is connected for every $a,b\in Q$.
\end{itemize}
\end{lemma}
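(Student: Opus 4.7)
\medskip
\noindent\textbf{Proof plan.} The implication (i) $\Rightarrow$ (ii) is immediate: if $Q$ is superconnected, then in particular each $Sg(a,b)$, being a subalgebra of $Q$, is connected. The real work is the converse.

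For (ii) $\Rightarrow$ (i), let $S$ be any subalgebra of $Q$ and fix $a,b\in S$; the goal is to produce an element of $\lmlt(S)$ sending $a$ to $b$. The plan is to transport a witness from the two-generated subalgebra $T=Sg(a,b)\subseteq S$, which is connected by hypothesis. Concretely, I would pick $h\in\lmlt(T)$ with $h(a)=b$, write $h$ as a word $L_{c_1}^{k_1}\cdots L_{c_n}^{k_n}$ with each $c_i\in T$, and form the analogous word $\tilde h = L_{c_1}^{k_1}\cdots L_{c_n}^{k_n}$ inside $\lmlt(S)$ (which is legitimate since $c_i\in T\subseteq S$).

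The only point to verify is that $\tilde h(a)=b$. Here I would invoke the fact that a subalgebra is closed under both $\ast$ and $\backslash$, so that for every $c\in T$ the bijections $L_c$ and $L_c^{-1}$ on $T$ coincide with the restrictions of the corresponding bijections on $S$. Applying $\tilde h$ to $a$ step by step inside $S$ therefore produces exactly the same sequence of intermediate elements as applying $h$ to $a$ inside $T$, and in particular $\tilde h(a)=h(a)=b$. This yields transitivity of $\lmlt(S)$ on $S$, hence $S$ is connected.

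I do not expect a serious obstacle: the entire content is the elementary observation that words in left multiplications indexed by elements of a subalgebra behave coherently across the chain $T\subseteq S\subseteq Q$. This coherence is what makes superconnectedness -- a priori a quantification over all subalgebras -- already detectable at the level of the two-generated ones.
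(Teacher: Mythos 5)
Your proposal is correct and follows essentially the same route as the paper: the forward implication is immediate, and for the converse the paper likewise lifts the witness from $\lmlt(Sg(a,b))$ to $\langle L_c : c\in Sg(a,b)\rangle\leq\lmlt(M)$, relying on the same coherence of left multiplications (and their inverses) along the chain $Sg(a,b)\subseteq M\subseteq Q$. Your write-up just spells out this restriction argument in more detail than the paper does.
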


\begin{proof}
The forward implication is clear. To prove the converse, let $M$ be a subalgebra of $Q$ and $a,b\in M$. The subgroup $\lmlt(Sg(a,b))$ is transitive on $Sg(a,b)$ and then so in particular there exists $h\in \langle L_c,\, c\in Sg(a,b)\rangle\leq\lmlt(M)$ such that $h(a)=b$. Therefore $M$ is connected.
\end{proof}

The {\it orbit decomposition} $\mathcal{O}_Q$ defined by the action of $\lmlt(Q)$ (as $a\, \mathcal{O}_Q \, b$ if and only if $a$ and $b$ are in the same orbit with respect to the action of ${\lmlt(Q)}$) is a congruence of $Q$ and $Q/\mathcal{O}_Q$ is a projection left quasigroup \cite[Lemma 1.8]{semimedial}. 

\begin{proposition}\label{pi_0}
Let $Q$ be a left quasigroup and $\alpha\in Con(Q)$. Then $Q/\alpha$ is a projection left quasigroup if and only if $\mathcal{O}_Q \leq \alpha$. In particular, $Q$ is connected if and only if $\mathcal{P}_2\notin \textbf{H}(Q)$.
\end{proposition}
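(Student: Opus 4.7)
The plan is to unwind the definition of projection left quasigroup on $Q/\alpha$ and translate it into a statement about orbits of $\lmlt(Q)$.

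For the forward direction of the equivalence, I would observe that $Q/\alpha$ being a projection left quasigroup means $[a]_\alpha\ast[b]_\alpha=[b]_\alpha$ for all $a,b\in Q$, i.e. $a\ast b\,\alpha\, b$, i.e. $L_a(b)\,\alpha\, b$ for every $a,b\in Q$. This says each $\alpha$-block is stabilized set-wise by every left multiplication, hence by all of $\lmlt(Q)$. Since $\mathcal{O}_Q$-classes are precisely the $\lmlt(Q)$-orbits, each such orbit is contained in an $\alpha$-class, which is exactly $\mathcal{O}_Q\leq\alpha$. For the reverse direction, if $\mathcal{O}_Q\leq\alpha$ then $L_a(b)$ and $b$ lie in the same $\lmlt(Q)$-orbit, hence in the same $\mathcal{O}_Q$-class, hence in the same $\alpha$-class, so $[a]_\alpha\ast[b]_\alpha=[a\ast b]_\alpha=[b]_\alpha$ as desired.

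For the ``in particular'' clause, the forward direction is immediate: if $Q$ is connected then $\mathcal{O}_Q=1_Q$, so any congruence $\alpha$ with $Q/\alpha$ a projection left quasigroup must satisfy $\alpha=1_Q$, hence $Q/\alpha$ is the trivial one-element algebra and cannot equal $\mathcal{P}_2$. For the converse, if $Q$ is not connected then $|Q/\mathcal{O}_Q|\geq 2$. Here the key auxiliary observation is that in a projection left quasigroup \emph{every} equivalence relation is a congruence, since the compatibility condition \eqref{cong} reduces to a tautology when $a\ast c=c$ and $a\backslash c=c$. Picking any equivalence on $Q/\mathcal{O}_Q$ with exactly two classes yields a quotient isomorphic to $\mathcal{P}_2$, and composing with the canonical projection $Q\to Q/\mathcal{O}_Q$ places $\mathcal{P}_2$ in $\textbf{H}(Q)$.

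No step looks genuinely difficult; the only mild subtlety is remembering that projection left quasigroups have a trivial congruence lattice structure (every equivalence is a congruence), which is what makes producing the two-element quotient free. The argument goes through verbatim for arbitrary cardinality, so no finiteness hypothesis is needed.
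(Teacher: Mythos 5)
Your proof is correct and follows essentially the same route as the paper: both directions of the main equivalence are the same unwinding of definitions (the paper phrases the forward direction via the identity $[h(a)]_\alpha=\pi_\alpha(h)([a]_\alpha)$ and the reverse direction via the isomorphism $Q/\alpha\cong\left(Q/\mathcal{O}_Q\right)/\left(\alpha/\mathcal{O}_Q\right)$, but the content is identical to your direct computation with $L_a(b)\,\alpha\,b$). For the ``in particular'' clause you usefully make explicit a step the paper leaves implicit, namely that a projection left quasigroup with at least two elements always has $\mathcal{P}_2$ as a homomorphic image because every equivalence relation on it is a congruence.
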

\begin{proof}
 If $\mathcal{O}_Q \leq \alpha $, then $Q/\alpha\cong \left(Q/\mathcal{O}_Q\right)/\left(\alpha/\mathcal{O}_Q\right)$. Therefore, $Q/\alpha$ is a projection left quasigroup. On the other hand, if $Q/\alpha$ is a projection left quasigroup, by virtue of \eqref{formula for pi alpha}, then $[h( a)]_\alpha=\pi_{\alpha}(h)([ a]_\alpha) = [a]_\alpha$ for every $a\in Q$ and $h\in \lmlt(Q)$. Hence, $\mathcal{O}_Q\leq \alpha$. 
 
 A left quasigroup is connected if and only if $Q/\mathcal{O}_Q$ is trivial, i.e. $Q$ has no proper projection factor. 
\end{proof}
\begin{corollary}\label{cor_superconnected}
A left quasigroup $Q$ is superconnected if and only if $\mathcal{P}_2\notin \textbf{{H}}\textbf{{S}}(Q)$.
\end{corollary}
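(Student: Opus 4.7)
The plan is simply to combine the definition of \emph{superconnected} with Proposition \ref{pi_0}. Recall that $Q$ is superconnected precisely when every subalgebra $M\in \mathbf{S}(Q)$ is connected, and Proposition \ref{pi_0} already identifies connectedness of $M$ with the condition $\mathcal{P}_2\notin \mathbf{H}(M)$. Since $\mathbf{H}\mathbf{S}(Q)=\bigcup_{M\in \mathbf{S}(Q)}\mathbf{H}(M)$, the two conditions should be logically equivalent after only a trivial rearrangement of quantifiers.

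For the forward direction I would assume $Q$ is superconnected and fix an arbitrary $M\in \mathbf{S}(Q)$. By hypothesis $M$ is connected, and Proposition \ref{pi_0} gives $\mathcal{P}_2\notin \mathbf{H}(M)$. Running over all subalgebras one gets $\mathcal{P}_2\notin \mathbf{H}\mathbf{S}(Q)$.

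Conversely, assuming $\mathcal{P}_2\notin \mathbf{H}\mathbf{S}(Q)$, for any $M\in \mathbf{S}(Q)$ we have $\mathbf{H}(M)\subseteq \mathbf{H}\mathbf{S}(Q)$, so in particular $\mathcal{P}_2\notin \mathbf{H}(M)$. Proposition \ref{pi_0} yields that $M$ is connected; since $M$ was arbitrary, $Q$ is superconnected.

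There is essentially no obstacle here: the entire content has already been packaged into Proposition \ref{pi_0}. The only thing worth a second glance is the set-theoretic identity $\mathbf{H}\mathbf{S}(Q)=\bigcup_{M\in\mathbf{S}(Q)}\mathbf{H}(M)$, which is the standard definition of the operator composition and requires no argument.
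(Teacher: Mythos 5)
Your proposal is correct and matches the paper's (implicit) argument: the corollary is stated without proof precisely because it is the combination of the definition of superconnectedness with the last assertion of Proposition \ref{pi_0}, mediated by the identity $\mathbf{H}\mathbf{S}(Q)=\bigcup_{M\in\mathbf{S}(Q)}\mathbf{H}(M)$, exactly as you describe. Nothing is missing.
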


The class of connected left quasigroups is closed under $\textbf{H}$, but it is not a closed under $\textbf{S}$ (for instance it is easy to find connected left quasigroups with projection subalgebras). The class of superconnected left quasigroups is closed under $\textbf{{S}}$ and $\textbf{{H}}$. On the other hand it is not closed under $\textbf{P}$ (e.g. the permutation left quasigroup $Q=(\mathbb{Z}_m,+1)$ is superconnected, but $Q^2$ is not even connected). %

 %
 %
%
%
%
%


The property of being latin is also related to the properties of $2$-generated subalgebras (similarly to what happens for superconnectedness in Lemma \ref{2gen superconnected}). 

\begin{lemma}\label{2gen latin}
Let $Q$ be a left quasigroup. If $Sg(a,b)$ is a finite latin left quasigroup for every $a,b\in Q$ then $Q$ is latin.
\end{lemma}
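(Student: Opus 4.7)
The plan is to show that every right multiplication $R_a$ is a bijection on $Q$. Surjectivity of $R_a$ is immediate: given $a,b\in Q$, the subalgebra $M=Sg(a,b)$ is finite latin by hypothesis, so there is some $x\in M\subseteq Q$ with $x\ast a=b$. The real work is injectivity of $R_a$.

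For injectivity, I would suppose $x_1\ast a=x_2\ast a=b$ in $Q$ and try to reduce both $x_1$ and $x_2$ to a common third element. The key observation is that $Sg(a,b)$ is already finite latin, hence contains a \emph{unique} element $x_0$ with $x_0\ast a=b$. The trick is to show $x_1=x_0=x_2$ by working inside the two-generated subalgebras $S_i=Sg(x_i,a)$ for $i=1,2$. Each $S_i$ is a subalgebra of $Q$, so $b=x_i\ast a\in S_i$; therefore $\{a,b\}\subseteq S_i$, and consequently $Sg(a,b)\subseteq S_i$. In particular $x_0$ lies in $S_i$. But $S_i$ is itself finite latin, so the equation $z\ast a=b$ has at most one solution inside $S_i$; since both $x_i$ and $x_0$ are such solutions, they must coincide. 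Applying this for $i=1,2$ gives $x_1=x_0=x_2$.

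Putting existence and uniqueness together, $R_a$ is a bijection of $Q$ for every $a$, which is exactly the latin condition.

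The main obstacle I anticipated was the mismatch between the $2$-generated hypothesis and a naive argument through $Sg(x_1,x_2)$, which need not contain $a$. The resolution is to argue in the opposite direction: enlarge $Sg(a,b)$ to the two $2$-generated subalgebras $Sg(x_i,a)$ (which automatically contain $b$), and propagate the uniqueness statement from these larger finite latin subalgebras down to the common subalgebra $Sg(a,b)$. Once this inclusion $Sg(a,b)\subseteq Sg(x_i,a)$ is noticed, the rest is a one-line application of finiteness plus injectivity of $R_a$ on each $Sg(x_i,a)$.
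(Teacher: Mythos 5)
Your proof is correct and follows essentially the same route as the paper's: surjectivity of $R_a$ comes from $Sg(a,b)$ being finite latin, and injectivity is obtained by locating a canonical solution of $z\ast a=b$ in a common finite latin subalgebra of $Sg(x_1,a)$ and $Sg(x_2,a)$ and then invoking injectivity of $R_a$ on each $Sg(x_i,a)$. The only (cosmetic) difference is that you take $Sg(a,b)$ as the common subalgebra where the paper uses the intersection $Sg(a,x_1)\cap Sg(a,x_2)$; your choice applies the hypothesis directly, whereas the paper's additionally needs the (easy) observation that this intersection is itself finite and latin.
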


\begin{proof}
Assume that $x*a=y*a$. Then $x*a=y*a\in U=Sg(a,y)\cap Sg(a,x)$, which is finite and latin and so $R_a(U)=U$. Hence, $x=y$ and right multiplications are injective. For every $a,b\in Q$ there exists $x\in Sg(a,b)$ for which $x*a=b$ and so right multiplications are surjective. 
\end{proof}

\begin{example}\label{Z and Q}
\text{}
\begin{itemize}
\item[(i)] If a quasigroup $(Q,*,\backslash,/)$ and its left quasigroup reduct $(Q,*,\backslash)$ are term equivalent then $(Q,*,\backslash)$ is superconnected. Hence, any finite latin left quasigroup is superconnected. The converse is not true, as witnessed by the following superconnected non-latin left quasigroup:
\smallskip
\begin{center}
	$Q=$
\begin{tabular}{|c c c c|}
\hline
1 &2&3&4\\
        2&1&3&4\\
        3&2&1&4\\
       4&2&3&1\\
       \hline
\end{tabular}\,.
\end{center}
\medskip

\item[(ii)] Latin left quasigroup are connected but they might not be superconnected. The left quasigroup $Q=\Aff(\mathbb{Q},-1)$ is latin. The subalgebra generated by $0,1$, i.e. $\Aff(\mathbb{Z},-1)$, is a non-connected subalgebra of $Q$ (and in particular the converse of Lemma \ref{2gen latin} does not hold). 

\end{itemize}

\end{example}

	\subsection{Idempotent left quasigroups}\label{Sec:Idempotent}
The blocks of congruences of idempotent left quasigroups are subalgebras, and in particular, the classes of $\lambda_Q$ are projection subalgebras. 
The orbits of $\lmlt(Q)$ and of $\dis(Q)$ coincide, because of the structure of $\lmlt{(Q)}$ given in Lemma \ref{disQ combinatorial}.  We extend \cite[Proposition 1.4]{Principal} to the setting of idempotent left quasigroups.

\begin{Lemma}\label{all faithful}
Let $Q$ be an idempotent left quasigroup. The following are equivalent:
\begin{itemize}
\item[(i)] $Q$ is superfaithful.
\item[(ii)] $Sg(a,b)$ is superfaithful for every $a,b\in Q$.
\item[(iii)] $\mathcal{P}_2 \notin \textbf{S}(Q)$.
\end{itemize}
In particular, if $Q$ is superconnected then $Q$ is superfaithful.
\end{Lemma}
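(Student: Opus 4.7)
The plan is to prove the cyclic implications (i)$\Rightarrow$(ii)$\Rightarrow$(iii)$\Rightarrow$(i), and then deduce the final statement from Corollary \ref{cor_superconnected}.

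The implication (i)$\Rightarrow$(ii) is immediate since subalgebras of $Sg(a,b)$ are subalgebras of $Q$. For (ii)$\Rightarrow$(iii) I would argue by contrapositive: if $P\in\textbf{S}(Q)$ is a copy of $\mathcal{P}_2$ consisting of two distinct elements $a,b$, then the left multiplications in $P$ are both the identity on $P$, so $L_a=L_b$ there, and $P=Sg(a,b)$ fails to be faithful.

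The core step is (iii)$\Rightarrow$(i). Suppose some subalgebra $M\le Q$ is not faithful, so there are distinct $a,b\in M$ with $L_a|_M=L_b|_M$. Then I want to show that $\{a,b\}$ is a subalgebra of $M$ isomorphic to $\mathcal{P}_2$, which would put $\mathcal{P}_2$ into $\textbf{S}(Q)$. Using idempotence, I have $a*a=a$ and $b*b=b$; then $b*a=L_b(a)=L_a(a)=a*a=a$ and $a*b=L_a(b)=L_b(b)=b*b=b$, so the multiplication table on $\{a,b\}$ is exactly the projection one. Closure under $\backslash$ follows from $x\backslash(x*y)=y$: each of $a\backslash a,a\backslash b,b\backslash a,b\backslash b$ lies in $\{a,b\}$. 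Hence $Sg(a,b)=\{a,b\}\cong\mathcal{P}_2$, contradicting (iii).

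For the final claim, if $Q$ is superconnected then by Corollary \ref{cor_superconnected} we have $\mathcal{P}_2\notin\textbf{H}\textbf{S}(Q)$, and since $\textbf{S}(Q)\subseteq\textbf{H}\textbf{S}(Q)$, in particular $\mathcal{P}_2\notin \textbf{S}(Q)$, so (iii) holds and $Q$ is superfaithful by the equivalence already established.

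I do not anticipate a serious obstacle: the only place where idempotence is essential is in forcing the $2$-element non-faithful witness to have projection structure on both sides (the identities $b*a=a$ and $a*b=b$), and this is a direct computation. The rest is bookkeeping about subalgebras and the definitions of $\lambda$, faithful, and superconnected.
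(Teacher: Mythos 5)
Your proof is correct and follows essentially the same route as the paper: the key step (iii)$\Rightarrow$(i) is the paper's observation that in an idempotent left quasigroup the classes of the Cayley kernel of any subalgebra are projection subalgebras, which is exactly your explicit two-element computation showing that a non-faithful witness $\{a,b\}$ is a copy of $\mathcal{P}_2$. The remaining implications and the deduction of the final claim from Corollary \ref{cor_superconnected} match the paper's argument.
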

\begin{proof}
(i) $\Rightarrow$ (iii) The subalgebra $\mathcal{P}_2$ is not faithful. 

(iii) $\Rightarrow$ (i) Let $M$ be a subalgebra of $Q$. The classes of $\lambda_M$ are projection subalgebras, therefore they are trivial.

(i) $\Leftrightarrow$ (ii) The equivalence is clear: indeed $\mathcal{P}_2\in \mathcal{S}(Q)$ if and only if $Sg(a,b)\cong \mathcal{P}_2$ for some $a,b\in Q$.
\end{proof}

Note that the class of superfaithful idempotent left quasigroup is closed under $\textbf{{S}}$ and $\textbf{{P}}$. 

\begin{example}
Superconnected and latin idempotent left quasigroups are superfaithful wether both the converse implications fails. Indeed the idempotent left quasigroup $\aff(\mathbb{Z},-1)$ in Example \ref{Z and Q}(ii) is superfaithful but not connected. 
\end{example}
%


%
A class of idempotent left quasigroups $\mathcal{K}$ is said to be closed under extensions if, whenever $Q/\alpha$ and $[a]_\alpha$ belong to $\mathcal{K}$ for every $a\in Q$ then also $Q$ belongs to $\mathcal{K}$. It is easy to see that if a class is closed under extensions then it is also closed under finite direct products.

%
%

\begin{lemma}\label{remark on super}
Let $Q$ be a left quasigroup and let $Q/\alpha$ be idempotent. If $Q/\alpha$ and $[a]_\alpha$ are (super)faithful (resp. connected)) for every $a\in Q$, then $Q$ is (super)faithful (resp. connected)).
\end{lemma}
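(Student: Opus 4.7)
The plan is to split the statement into the four assertions (faithful, superfaithful, connected, superconnected) and observe that the super versions reduce to the plain ones by restricting $\alpha$ to a subalgebra. The first step, common to all four cases, is to note that since $Q/\alpha$ is idempotent, every $\alpha$-block is a subalgebra of $Q$: if $x,y\in[a]_\alpha$ then $[x*y]_\alpha=[x]_\alpha*[y]_\alpha=[a]_\alpha*[a]_\alpha=[a]_\alpha$, and similarly for $\backslash$. Consequently the inclusion $\langle L_c:c\in[a]_\alpha\rangle\leq\lmlt(Q)_{[a]_\alpha}$ holds, and restriction to $[a]_\alpha$ defines a surjective homomorphism onto $\lmlt([a]_\alpha)$.

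For the connected case I would apply Lemma \ref{criterion for connectedness}. By hypothesis $Q/\alpha$ is connected, so it only remains to verify that $\lmlt(Q)_{[a]_\alpha}$ is transitive on each block. Since $[a]_\alpha$ is connected, its left multiplication group $\lmlt([a]_\alpha)$ is transitive on $[a]_\alpha$, and transitivity lifts through the surjection above to $\langle L_c:c\in[a]_\alpha\rangle\leq\lmlt(Q)_{[a]_\alpha}$.

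For the faithful case, suppose $L_a=L_b$ in $\lmlt(Q)$. Applying $\pi_\alpha$ gives $L_{[a]_\alpha}=L_{[b]_\alpha}$ in $\lmlt(Q/\alpha)$, so faithfulness of $Q/\alpha$ forces $[a]_\alpha=[b]_\alpha$. Restricting $L_a=L_b$ to this common block yields an equality of left multiplications inside the subalgebra $[a]_\alpha$, and faithfulness of the block delivers $a=b$.

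For the super versions, let $M\leq Q$ be an arbitrary subalgebra and set $\beta=\alpha\cap M^2\in Con(M)$. The natural map $[a]_\beta\mapsto [a]_\alpha$ embeds $M/\beta$ as a subalgebra of $Q/\alpha$, while $[a]_\beta=[a]_\alpha\cap M$ is a subalgebra of $[a]_\alpha$. Hence under the superfaithful (resp.\ superconnected) hypothesis, $M/\beta$ and every block $[a]_\beta$ are again faithful (resp.\ connected), and additionally $M/\beta$ is idempotent since $Q/\alpha$ is. Applying the already-proved faithful (resp.\ connected) case to $M$ with the congruence $\beta$ shows $M$ is faithful (resp.\ connected), as required. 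The only place where care is needed is verifying that the restriction $\beta$ inherits the hypotheses cleanly; this is immediate from the observation that blocks of $\alpha$ are subalgebras, so I do not expect any genuine obstacle.
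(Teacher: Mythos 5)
Your proof is correct and follows essentially the same route as the paper: blocks of $\alpha$ are subalgebras because $Q/\alpha$ is idempotent, the faithful case is handled by pushing $L_a=L_b$ down to $Q/\alpha$ and then restricting to the common block, and the connected case invokes Lemma \ref{criterion for connectedness} via the transitive subgroup $\langle L_c : c\in[a]_\alpha\rangle\leq\lmlt(Q)_{[a]_\alpha}$. The only (immaterial) difference is packaging: the paper runs the argument directly on an arbitrary subalgebra $M$ with $\beta=\alpha\cap(M\times M)$ and recovers the plain cases by setting $M=Q$, whereas you prove the plain cases first and then reduce the super cases to them by restricting to $(M,\beta)$.
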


\begin{proof}
The blocks of $\alpha$ are subalgebras of $Q$ since $Q/\alpha$ is idempotent. Let $M$ be a subalgebra of $Q$. We denote by $M/\alpha$ the image of $M$ under the canonical map $Q\longrightarrow Q/\alpha$.

Assume that $Q/\alpha$ and $[a]_\alpha$ are superfaithful for every $a\in Q$. If $L_a|_M=L_b|_M$ for some $a,b\in M$ then $L_{[a]}|_{M/\alpha}=L_{[b]}|_{M/\alpha}$ and so $[a]=[b]$ since the subalgebra $M/\alpha$ of $Q/\alpha$ is faithful. Therefore $L_a|_{M\cap [a]}=L_b|_{M\cap [a]}$ which implies $a=b$ since $[a]\cap M$ is faithful. 

Assume that $Q/\alpha$ and $[a]$ are superconnected for every $a\in Q$. The relation $\beta=\alpha\cap M\times M$ is a congruence of $M$. The group 
$$L=\langle \setof{L_b }{b\in [a]_\beta}\rangle\leq \lmlt(M)_{[a]_\beta}$$
 is transitive over $[a]_\beta$ since $[a]_\beta=[a]\cap M$ is a connected subalgebra of $[a]$. So $\lmlt(M)_{[a]_\beta}$ is transitive on $[a]_\beta$, $M/\beta$ is connected and therefore $M$ is connected by virtue of Lemma \ref{criterion for connectedness}.

For faithfulness and connectedness the same argument applied to the case $M=Q$ will do.
\end{proof}
%
%
%
%

\begin{corollary}\label{extensions of super}
The class of (super)faithful (resp. connected) idempotent left quasigroups is closed under extensions.
%
\end{corollary}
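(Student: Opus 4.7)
The plan is to deduce the corollary directly from Lemma \ref{remark on super}, which has already been formulated to cover all four cases simultaneously (faithful, superfaithful, connected, superconnected). Let $Q$ be a left quasigroup and $\alpha \in Con(Q)$ such that $Q/\alpha$ and every block $[a]_\alpha$ belong to one of these classes of idempotent left quasigroups; I need to show that $Q$ itself belongs to the same class.

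First I would verify that $Q$ is idempotent, so that placing $Q$ back in the target class makes sense. Since $Q/\alpha$ is idempotent, each block $[a]_\alpha$ is a subalgebra of $Q$. Given any $x \in Q$, we have $x \in [x]_\alpha$, which by hypothesis is an idempotent subalgebra, so $x \ast x = x$. Hence $Q$ is idempotent.

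Next, I would invoke Lemma \ref{remark on super}: its hypotheses are exactly the hypothesis of the corollary (an idempotent quotient $Q/\alpha$ whose blocks and quotient share the property in question), and its conclusion is exactly that $Q$ has the corresponding (super)faithfulness or (super)connectedness property. Since the lemma was proved for all four variants together, nothing further is needed.

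The main obstacle has already been absorbed into Lemma \ref{remark on super}: once one has the block-lifting statement for (super)faithful and (super)connected idempotent left quasigroups, closure under extensions is just a rephrasing. The only residual verification is the trivial one above, namely that idempotence passes from the quotient and the blocks to $Q$ itself.
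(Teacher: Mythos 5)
Your proposal is correct and is exactly the paper's intended argument: the corollary is stated without proof precisely because it is an immediate restatement of Lemma \ref{remark on super}. Your extra observation that idempotence of $Q$ follows from idempotence of the blocks (each element lying in its own block, which is a subalgebra because $Q/\alpha$ is idempotent) is a small but legitimate verification that the paper leaves implicit.
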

%
%
%
%

%
%
The class of idempotent latin left quasigroup is not closed under extensions. For instance the following superconnected idempotent left quasigroup has a congruence with a factor of size $3$ and blocks of size $3$ which are latin, but it is not latin itself:

	\medskip

\begin{center}$Q=$
	\begin{tabular}{ |c c c | c c c | c c c  | }
		\hline
		1 &3&2&7 & 8 & 9 & 4 & 5 & 6\\
		3 &2&1&7 & 8 & 9 & 4 & 5 & 6\\
		2 &1&3&7 & 8 & 9 & 4 & 5 & 6\\
      \hline	
		7 &8&9 &4 & 6 & 5 & 1 & 2 & 3\\
		
		7 &8&9 &6 & 5 & 1 & 1 & 2 & 3\\
		
		7 &8&9 &5 & 4 & 6 & 1 & 2 & 3\\
	      \hline
		4 &5&6 &1 & 2 & 3 & 7 & 9 & 8\\
		4 &5&6 &1 & 2 & 3 & 9 & 8 & 7\\
		4 &5&6 &1 & 2 & 3 & 8 & 7 & 9\\
	
		\hline
	\end{tabular}\,.
\end{center}
%
%

\section{Racks and quandles}\label{Sec:Racks}

A \emph{rack} is a left distributive left quasigroup, i.e. a left quasigroup satisfying the identity 
\begin{equation}\label{LD}
x\ast (y\ast z)\approx (x\ast y)\ast (x\ast z).\tag{LD}
\end{equation}
An idempotent rack is a {\it quandle}. Left-distributivity \eqref{LD} implies that for a quandle $Q$, $hL_a h^{-1}=L_{h(a)}$ for every $h\in \lmlt(Q)$ and $a\in Q$. In particular, the displacement group is simply given by 
$$\dis(Q)=\langle L_a L_{b}^{-1},\, a,b\in Q\rangle.$$

\begin{example}\label{coset quandle}
\text{}
\begin{itemize}

\item[(i)] Permutation left quasigroups are racks. 

\item[(ii)] Let $G$ be a group and $H\subseteq G$ be closed under conjugation. Then $Conj(H)=(H,*)$ where $x*y=xyx^{-1}$ is a quandle.

\item[(iii)] Let $G$ be a group, $f\in  \aut{G}$ and $H \leq Fix(f)=\setof{a\in G}{f(a)=a}$. Let $G/H$ be the set of left cosets of $H$ and the multiplication defined by
	\begin{displaymath}
	aH\ast bH=af(a^{-1}b)H.
	\end{displaymath}
	Then $\Q(G,H,f)=(G/H,\ast,\backslash) $ is a quandle, called a \emph{coset} quandle. A coset quandle $\Q(G,H,f)$ is called \emph{principal} over $G$ if $H=1$ and is such case it is denoted by $\Q(G,f)$

\item[(iv)] Idempotent affine left quasigroups are quandles.	
	
\end{itemize}
\end{example}
%
%
%
%
%

If the automorphism group of a quandle $Q$ is transitive, we say that $Q$ is {\it homogeneous}. The construction in Example \ref{coset quandle}(iii) characterizes homogeneous quandles \cite{J}. For instance, connected quandles are homogeneous and they can be represented as coset quandles over their displacement group.
\begin{proposition}\cite{J, hsv} 
Let $Q$ be a connected quandle $Q$. Then $$Q\cong \mathcal{Q}(\dis(Q),\dis(Q)_a,\widehat{L_a})$$
for every $a\in Q$.
\end{proposition}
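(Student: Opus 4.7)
The plan is to write down the obvious map $\bar\phi\colon \dis(Q)/\dis(Q)_a \to Q$ induced by evaluation at $a$ and check that it is a well-defined quandle isomorphism. The three things to verify are: the coset construction makes sense (i.e.\ $\dis(Q)_a \leq Fix(\widehat{L_a})$), the map $\bar\phi$ is a bijection (transitivity of $\dis(Q)$), and it preserves the operation. All of these rest on the fact that for a quandle, left-distributivity gives $hL_bh^{-1}=L_{h(b)}$ for every $h \in \lmlt(Q)$, which is recorded just before the statement.

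First I would observe that $\dis(Q)$ acts transitively on $Q$. Since $Q$ is connected, $\lmlt(Q)$ is transitive, and by the decomposition $\lmlt(Q) = \dis(Q)\langle L_a\rangle$ from Lemma \ref{disQ combinatorial} together with idempotence ($L_a(a)=a$), the $\lmlt(Q)$-orbit of $a$ coincides with the $\dis(Q)$-orbit of $a$. Consequently the evaluation map $\phi\colon \dis(Q) \to Q$, $g\mapsto g(a)$, is surjective, and its fibers are precisely the left cosets of $\dis(Q)_a$, yielding a bijection $\bar\phi\colon \dis(Q)/\dis(Q)_a \to Q$ sending $g\dis(Q)_a$ to $g(a)$.

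Next I would check that $\dis(Q)_a \leq Fix(\widehat{L_a})$, which is needed for $\Q(\dis(Q),\dis(Q)_a,\widehat{L_a})$ to be defined. Every $g\in \lmlt(Q)$ is a quandle automorphism of $Q$ by left-distributivity, so if $g(a)=a$ then for any $x\in Q$
\[
gL_a(x)=g(a\ast x)=g(a)\ast g(x)=a\ast g(x)=L_ag(x),
\]
hence $\widehat{L_a}(g)=L_agL_a^{-1}=g$. Finally, to see $\bar\phi$ is a homomorphism, write $u=g(a)$, $v=h(a)$ and use $\widehat{L_a}=L_aL_a^{-1}$ together with $L_a^{-1}(a)=a$:
\[
\bar\phi(g\dis(Q)_a \ast h\dis(Q)_a) = g\widehat{L_a}(g^{-1}h)(a) = gL_ag^{-1}h(a) = gL_ag^{-1}(v),
\]
and since $g$ is an automorphism with $g(a)=u$, the identity $gL_ag^{-1}=L_{g(a)}=L_u$ gives $gL_ag^{-1}(v)=u\ast v = \bar\phi(g\dis(Q)_a)\ast\bar\phi(h\dis(Q)_a)$, as required.

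The only potential pitfall is the verification that $\dis(Q)_a$ lies in the fixed subgroup of $\widehat{L_a}$; everything else is a bookkeeping exercise once one invokes that elements of $\lmlt(Q)$ are quandle automorphisms. Since the paper already records the key identity $hL_bh^{-1}=L_{h(b)}$ for quandles, this step reduces to a one-line computation, so in truth there is no serious obstacle and the proposition follows from a transparent coset-decomposition argument.
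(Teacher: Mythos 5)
Your argument is correct and is exactly the standard proof of this result (the paper itself only cites \cite{J, hsv} without proof): transitivity of $\dis(Q)$ via $\lmlt(Q)=\dis(Q)\langle L_a\rangle$ and $L_a(a)=a$, the containment $\dis(Q)_a\leq Fix(\widehat{L_a})$ from the fact that elements of $\lmlt(Q)$ are automorphisms, and the coset computation using $hL_bh^{-1}=L_{h(b)}$. The only blemish is the typo ``$\widehat{L_a}=L_aL_a^{-1}$'' (you mean $\widehat{L_a}(k)=L_akL_a^{-1}$), which does not affect the computation that follows.
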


\subsection{Superfaithful quandles}\label{Sec:superfaithful}



In \cite{involutive_quandles_russo} the class of {\it $L$-groupoids} have been defined as racks such that the equation $x*a=a$ is uniquely solvable in $x$ for every $a\in Q$. According to \cite[Proposition 1]{involutive_quandles_russo} $L$-groupoids are idempotent and so they are exactly quandles with no proper projection subquandles, i.e. $L$-groupoids coincide with superfaithful quandles.

\begin{lemma}\cite[Proposition 2.4]{Principal}\label{superfQ}
Let $Q$ be a quandle. The following are equivalent:
\begin{itemize}
\item[(i)] $Q$ is superfaithful.
\item[(ii)] $\mathcal{P}_2 \notin \textbf{S}(Q)$.
\item[(iii)] $Fix(L_a)=\{a\}$ for every $a\in Q$.
\end{itemize}
\end{lemma}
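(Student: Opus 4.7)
The equivalence (i) $\Leftrightarrow$ (ii) is immediate from Lemma \ref{all faithful}, since every quandle is idempotent. It therefore suffices to prove (iii) $\Rightarrow$ (i) and (ii) $\Rightarrow$ (iii).

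The implication (iii) $\Rightarrow$ (i) is direct: given a subalgebra $M$ of $Q$ with $L_a|_M = L_b|_M$ for some $a,b \in M$, evaluating at $a$ and using idempotence yields $a = L_a(a) = L_b(a)$, hence $a \in Fix(L_b) = \{b\}$, so $a = b$. Thus every subalgebra of $Q$ is faithful.

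For (ii) $\Rightarrow$ (iii) I argue by contrapositive. Suppose $b \in Fix(L_a) \setminus \{a\}$. The quandle identity $L_{a*b} = L_a L_b L_a^{-1}$, together with $a*b = b$, forces $L_a L_b = L_b L_a$. Setting $c := b*a$ and using this commutation twice, I compute
\[
L_a(c) = L_a L_b(a) = L_b L_a(a) = L_b(a) = c,
\]
\[
L_c(a) = L_b L_a L_b^{-1}(a) = L_b L_b^{-1} L_a(a) = a,
\]
so $c \in Fix(L_a)$ and $a \in Fix(L_c)$. If $c \neq a$, the set $\{a,c\}$ is closed under $*$ (and hence under $\backslash$, since both left translations act trivially on it), yielding a subalgebra isomorphic to $\mathcal{P}_2$. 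If instead $c = a$, i.e.\ $b*a = a$, then combined with $a*b = b$ and $a \neq b$ the pair $\{a,b\}$ itself is a copy of $\mathcal{P}_2$. Either way $\mathcal{P}_2 \in \textbf{S}(Q)$, contradicting (ii).

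The only subtle point is picking the right pair that forms a projection subquandle: the naive choice $\{a,b\}$ need not be closed, because $b*a$ may be a third element. The commutation $L_a L_b = L_b L_a$ forced by $a*b=b$ is precisely what allows us, in that case, to replace $b$ by the auxiliary element $c = b*a$ and still obtain a $\mathcal{P}_2$ subalgebra.
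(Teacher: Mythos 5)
Your proof is correct. Note that the paper itself gives no argument for this lemma---it is quoted from \cite[Proposition 2.4]{Principal}---so there is no in-text proof to compare against; judged on its own, your argument is complete. The reduction of (i)$\Leftrightarrow$(ii) to Lemma \ref{all faithful} is legitimate since quandles are idempotent, the implication (iii)$\Rightarrow$(i) via $L_a(a)=a$ is the standard one, and in (ii)$\Rightarrow$(iii) you correctly identify and resolve the only real subtlety: $a*b=b$ alone does not make $\{a,b\}$ a subquandle, because $b*a$ may escape the pair. Your use of the conjugation identity $L_{a*b}=L_aL_bL_a^{-1}$ to force $L_aL_b=L_bL_a$, and then the passage to $c=b*a$ (which satisfies $a*c=c$ and $c*a=a$), cleanly produces a two-element projection subquandle in either case $c\neq a$ or $c=a$; closure under $\backslash$ is automatic since both left translations restrict to the identity on the pair. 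This is essentially the argument one finds in the cited source, so nothing further is needed.
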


%

The coset quandle construction provides a way to construct finite homogeneous superfaithful quandles.
\begin{lemma}\label{P_2 in coset}
	%
Let $Q=\mathcal{Q}(G,H,f)$ be a quandle over a finite group $G$.	If $|H|$ and $|f|$ are coprime then $|x^f|=|xH^{L_H}|$ for every $x\in G$. 
\end{lemma}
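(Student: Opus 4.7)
The plan is to first identify the permutation that $L_H$ induces on $G/H$. Since any representative of the coset $H$ lies in $Fix(f)$, we have $f(h)=h$, and hence $f(h^{-1})=h^{-1}$. Using the defining formula of the coset quandle with $a=h\in H$ and $b=x$, a direct computation yields
$$L_H(xH)=hf(h^{-1}x)H=hf(h^{-1})f(x)H=hh^{-1}f(x)H=f(x)H.$$
Thus $L_H$ acts on $G/H$ as the single permutation $\sigma\colon xH\mapsto f(x)H$, so the orbit $xH^{L_H}$ coincides with the $\langle\sigma\rangle$-orbit of $xH$. In particular its size $n$ is the smallest positive integer with $f^{n}(x)\in xH$.

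Next I would write $f^{n}(x)=xh_0$ with $h_0\in H$ and exploit again that $f$ fixes $h_0$. Since $f(xh_0)=f(x)h_0$, a short induction gives
$$f^{n+k}(x)=f^{k}(x)h_0\qquad \text{and hence}\qquad f^{kn}(x)=xh_0^{k}$$
for every $k\ge 1$. Therefore $f^{kn}(x)=x$ if and only if $h_0^{k}=1$, so the full $\langle f\rangle$-orbit of $x$ closes up exactly when $k=|h_0|$. This gives the key identity $|x^{f}|=n\cdot|h_0|$.

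The coprimality hypothesis enters only at the very end. On the one hand $|h_0|$ divides $|H|$, as $h_0\in H$; on the other hand $|h_0|$ divides $|x^{f}|$, and the latter divides $|f|$ because $f^{|f|}=1$ fixes $x$. Hence $|h_0|$ divides $\gcd(|H|,|f|)=1$, forcing $h_0=1$ and consequently $|x^{f}|=n=|xH^{L_H}|$.

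I do not anticipate a serious obstacle here. The only step that requires some care is the bootstrap $f^{n+k}(x)=f^{k}(x)h_0$, which rests crucially on $h_0$ being $f$-fixed; without $H\le Fix(f)$ one would not get clean multiplicativity of orbit lengths. Once the action of $L_H$ on $G/H$ has been pinned down as $\sigma$, the rest is an elementary comparison of orbit sizes under a cyclic group action on $G$ and on $G/H$.
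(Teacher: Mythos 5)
Your proof is correct and follows essentially the same route as the paper's: both identify the orbit length $n=|xH^{L_H}|$ as the least $n$ with $f^n(x)\in xH$, write $f^n(x)=xh_0$ with $h_0\in H\leq Fix(f)$, deduce $f^{kn}(x)=xh_0^k$ and hence $|x^f|=n|h_0|$, and then kill $h_0$ via the coprimality of $|H|$ and $|f|$. The only difference is that you spell out the preliminary computation $L_H(xH)=f(x)H$, which the paper leaves implicit.
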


\begin{proof}
Clearly $n=|xH^{L_H}|$ divides $|x^f|$ and $f^n(x)=xa$ for some $a\in H$. Therefore $f^{sn}(x)=xa^s$ and so $|x^f|=n|a|$. Thus $|a|$ divides both $|H|$ and $|f|$ and so $a=1$, i.e. $|x^f|=n$.
\end{proof}

\begin{corollary}
Let $Q=\mathcal{Q}(G,Fix(f),f)$ be a quandle over a finite group $G$. If $|Fix(f)|$ and $|f|$ are coprime then $Q$ is superfaithful.
\end{corollary}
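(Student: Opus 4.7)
The plan is to invoke Lemma \ref{superfQ}(iii) and show that $\mathrm{Fix}(L_{gH})=\{gH\}$ for every coset $gH\in Q$. By the defining formula of the coset quandle, $L_{gH}(xH)=g f(g^{-1}x)H$, so the equation $L_{gH}(xH)=xH$ is equivalent to $f(g^{-1}x)H=g^{-1}x\,H$. Setting $y=g^{-1}x$, it therefore suffices to show that any $yH\in G/H$ with $f(y)H=yH$ must satisfy $y\in H$; equivalently, $\mathrm{Fix}(L_H)=\{H\}$ reduces the general case to the distinguished coset via a translation.

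Now the condition $f(y)H=yH$ says precisely that $yH$ is a fixed point of $L_H$, i.e.\ the orbit $yH^{L_H}$ has size one. This is where Lemma \ref{P_2 in coset} enters: since $|H|=|\mathrm{Fix}(f)|$ and $|f|$ are coprime, the lemma yields $|y^f|=|yH^{L_H}|=1$. Hence $f(y)=y$, so $y\in\mathrm{Fix}(f)=H$, and therefore $xH=gyH=gH$. This exhibits $\mathrm{Fix}(L_{gH})=\{gH\}$, which by Lemma \ref{superfQ} means $\mathcal{P}_2\notin \textbf{S}(Q)$, i.e.\ $Q$ is superfaithful.

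There is no real obstacle here: the result is essentially an immediate consequence of Lemma \ref{P_2 in coset}, bundled with the elementary observation that the fixed-point computation for an arbitrary $L_{gH}$ can be translated to the one for $L_H$ by replacing $x$ with $g^{-1}x$. The only point to verify carefully is the algebraic identity $L_{gH}(xH)=g f(g^{-1}x)H$, which is the definition of the coset quandle operation, and the fact that when $H=\mathrm{Fix}(f)$ membership in $H$ coincides exactly with being $f$-fixed — this is what allows the conclusion $|y^f|=1\Rightarrow y\in H$ to be genuinely a statement about cosets rather than about elements.
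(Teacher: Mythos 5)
Your proof is correct and follows essentially the same route as the paper: both reduce the statement to $\mathrm{Fix}(L_H)=\{H\}$ via Lemma \ref{P_2 in coset} applied to $H=\mathrm{Fix}(f)$, and then invoke Lemma \ref{superfQ}(iii). The only (cosmetic) difference is that you handle an arbitrary $L_{gH}$ by the explicit translation $y=g^{-1}x$, whereas the paper appeals to homogeneity of $Q$ to say all left multiplications have the same cycle structure; both justifications are valid.
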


\begin{proof}
Apply Lemma \ref{P_2 in coset} to the case $H=Fix(f)$. Indeed if $\{H,gH\}$ is a projection subquandle, then $H\ast gH=f(g)H=gH$. Thus $f(g)=g$, i.e. $g\in H$. Therefore $Fix(L_H)=\{H\}$ and since $Q$ is homogeneous, the left multiplications have all the same cycle structure, i.e. $Fix(L_{xH})=\{xH\}$ for every $x\in G$.
\end{proof}

The converse of Lemma \ref{P_2 in coset} is not true: there exist latin quandles of size $pq$, for $p,q$ primes, with left multiplications of order $2p$ and stabilizers of size $p$ \cite{LSS} (e.g. {\tt SmallQuandle}(15,5) from the \cite{RIG} database of GAP).

Let $G$ be a group, $\theta\in \aut{G}$ and $t\in\mathbb{N}$. We define
$$\theta_t:G^t\longrightarrow G^t,\quad (x_1,\ldots,x_t)\mapsto (\theta(x_t),x_1,x_2,\ldots, x_{t-1}).$$
It is easy to check that $\theta_t$ is an automorphism of $G^t$ and that $H_t=Fix(\theta_t)=\setof{(a,a,\ldots,a)}{a\in Fix(\theta)}\cong Fix(\theta)$. We denote by $(G,t,\theta)$ the coset quandle $\mathcal{Q}(G^t,H_t,\theta_t)$.

\begin{lemma}\label{proj sub for simple}
Let $Q=(G,t,\theta)$ be a quandle and $Fix(\theta_t)=H_t$. Then
$$Fix(L_{H_t})=\setof{(b ,ba,ba^2,\ldots, ba^{t-2},ba^{t-1})H_t}{ b\in G,\,a\in Fix(\theta),\, \theta(b)=ba^{-t}}$$
%
%
	%
\end{lemma}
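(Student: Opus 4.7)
The plan is to unwind the fixed-point equation for $L_{H_t}$ coordinate by coordinate in $G^t$, solve the resulting system of group equations, and show that its solutions are exactly those parametrized by a pair $(b,a)$ as in the statement.

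First I would observe that since $H_t$ is the coset of the identity in $G^t$, the formula for the coset-quandle multiplication (Example \ref{coset quandle}(iii)) simplifies to $L_{H_t}(yH_t) = H_t \ast yH_t = \theta_t(y)H_t$ for every $y\in G^t$. Hence $yH_t \in Fix(L_{H_t})$ if and only if $y^{-1}\theta_t(y)\in H_t$.

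The core step is a direct calculation: writing $y=(y_1,\ldots,y_t)$, the definition of $\theta_t$ gives
\begin{equation*}
y^{-1}\theta_t(y)=\bigl(y_1^{-1}\theta(y_t),\, y_2^{-1}y_1,\, y_3^{-1}y_2,\, \ldots,\, y_t^{-1}y_{t-1}\bigr).
\end{equation*}
Because $H_t$ consists of constant tuples with entries in $Fix(\theta)$, this lies in $H_t$ exactly when all coordinates coincide with a common element $c\in Fix(\theta)$. The relations $y_{i+1}^{-1}y_i=c$ for $i=1,\ldots,t-1$ are a straightforward recursion: setting $b:=y_1$ and $a:=c^{-1}\in Fix(\theta)$, they force $y_i = ba^{i-1}$. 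Substituting these expressions into the remaining relation $y_1^{-1}\theta(y_t)=c=a^{-1}$ and using that $\theta$ fixes $a$ yields $\theta(b)\,a^{t-1}=ba^{-1}$, equivalently $\theta(b)=ba^{-t}$. This shows the inclusion "$\subseteq$", and reversing the computation (or a single verification with $y_i=ba^{i-1}$ and the stated hypotheses on $b,a$) yields the reverse inclusion, proving equality of the two sets.

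The only real care needed is bookkeeping in the non-abelian group $G$: tracking the direction of the multiplications $y_i^{-1}y_{i-1}$, making the sign choice $a=c^{-1}$ (so that the parametrization reads $b,ba,ba^2,\ldots$ rather than $b,ba^{-1},ba^{-2},\ldots$), and using $a\in Fix(\theta)$ to pull $\theta$ through the powers of $a$ when verifying the condition $\theta(b)=ba^{-t}$. Beyond these bookkeeping checks the argument is a routine unwinding of the definitions and I do not expect any serious obstacle.
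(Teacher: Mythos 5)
Your proof is correct and takes essentially the same route as the paper: the paper likewise reduces $\theta_t(x)H_t=xH_t$ to the coordinate-wise system $b_i^{-1}b_{i+1}=a$, $\theta(b_t)^{-1}b_1=a$ with $a\in Fix(\theta)$ and solves the recursion to obtain the stated parametrization. The only cosmetic difference is the direction in which you form the quotients (hence your substitution $a=c^{-1}$), which changes nothing since $a$ ranges over all of $Fix(\theta)$.
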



\begin{proof}
 We have that $H_t\ast xH_t=\theta_t(x)H_t=xH_t$ for $x=(b_1,\ldots, b_t)$ if and only $x$ is a solution to the following system of equations
	\begin{equation}\label{system of eq}
	\left\{
	\begin{array}{l}
	b_1^{-1}b_2=a\\
	b_2^{-1}b_3=a\\
	\ldots \\
	b_{t-1}^{-1}b_{t}=a\\
	\theta(b_{t})^{-1}b_{1}=a\\  
	\end{array}\right.
	\end{equation}  
	where $a\in Fix(\theta)$. The solutions to \eqref{system of eq} are $S=\setof{(b ,ba,ba^2,\ldots, ba^{t-2},ba^{t-1})}{a\in Fix(\theta),\, \theta(b)=ba^{-t}}$. 
\end{proof}

We can use the construction $(G,t,\theta)$ to produce an infinite family of superfaithful quandles out of a single one.

\begin{proposition}\label{prop on simple}
Let $G$ be a finite group, $\theta\in \aut{G}$ and $t\in \mathbb{N}$. The following are equivalent:
\begin{itemize}
\item[(i)] $(G,1,\theta)$ is superfaithful and $t$ and $|Fix(\theta)|$ are coprime.
\item[(ii)] $(G,t,\theta)$ is superfaithful.
\end{itemize}
\end{proposition}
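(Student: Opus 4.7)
The plan is to exploit that $(G,1,\theta)=\Q(G,Fix(\theta),\theta)$ and $(G,t,\theta)=\Q(G^t,H_t,\theta_t)$ are homogeneous coset quandles, so by Lemma \ref{superfQ}(iii) together with homogeneity (as used in the proof of the Corollary following Lemma \ref{P_2 in coset}), each is superfaithful if and only if the fixed set of the left multiplication at the distinguished coset reduces to that single coset. The explicit description of $Fix(L_{H_t})$ provided by Lemma \ref{proj sub for simple} will then translate the equivalence into a group-theoretic statement about the $t$-th power map on $Fix(\theta)$.

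For (i) $\Rightarrow$ (ii), I would take a coset $(b,ba,\ldots,ba^{t-1})H_t\in Fix(L_{H_t})$ as described by Lemma \ref{proj sub for simple}, with $a\in Fix(\theta)$ and $\theta(b)=ba^{-t}$. Then $b^{-1}\theta(b)=a^{-t}\in Fix(\theta)$, so the superfaithfulness of $(G,1,\theta)$ forces $b\in Fix(\theta)$; this gives $a^{t}=1$. Since $|a|$ divides both $t$ and $|Fix(\theta)|$, coprimality yields $a=1$, and the tuple lies in $H_t$, so the coset is $H_t$.

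For (ii) $\Rightarrow$ (i), I would argue in two steps. First, coprimality follows by contrapositive: if some prime divides both $t$ and $|Fix(\theta)|$, pick a non-identity $a\in Fix(\theta)$ of that prime order, so $a^{t}=1$; then $(1,a,a^{2},\ldots,a^{t-1})H_t$ lies in $Fix(L_{H_t})$ by Lemma \ref{proj sub for simple} (with $b=1$) yet differs from $H_t$, contradicting (ii). Assuming coprimality, the map $x\mapsto x^{t}$ is a bijection of the finite abelian\textbf{-}free-of-constraint set $Fix(\theta)$ (viewed as a group). Given $b\in G$ with $b^{-1}\theta(b)\in Fix(\theta)$, write $\theta(b)=ba^{-1}$ with $a\in Fix(\theta)$ and pick $a'\in Fix(\theta)$ with $(a')^{t}=a$; then $\theta(b)=b(a')^{-t}$, so $(b,ba',\ldots,b(a')^{t-1})H_t\in Fix(L_{H_t})$. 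By (ii) this coset equals $H_t$, forcing $a'=1$, hence $a=1$ and $b\in Fix(\theta)$, which is exactly superfaithfulness of $(G,1,\theta)$.

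The main obstacle is the back-and-forth bookkeeping between the parameters $(b,a)$ of Lemma \ref{proj sub for simple} for $t$ and for $1$, which hinges on the ability to extract $t$-th roots inside $Fix(\theta)$; the finiteness of $G$ is essential for the power map on $Fix(\theta)$ to be bijective under coprimality, and without it the implication (i) $\Rightarrow$ (ii) could only be recovered under an extra divisibility hypothesis on $Fix(\theta)$.
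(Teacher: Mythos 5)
Your proof is correct and follows essentially the same route as the paper: both rest on Lemma \ref{proj sub for simple} (together with homogeneity and Lemma \ref{superfQ}) to reduce superfaithfulness to the condition $Fix(L_{H_t})=\{H_t\}$, use the explicit coset $(1,a,\ldots,a^{t-1})H_t$ to show coprimality is necessary, and exploit the bijectivity of the $t$-th power map on $Fix(\theta)$ (equivalently on each $\langle a\rangle$) to translate witnesses between the cases $t$ and $1$. The only cosmetic difference is that you run the two implications separately at the level of cosets, while the paper packages the translation as a single equivalence of "bad pairs" $(b,a)$; the content is the same.
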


\begin{proof}
According to Lemma \ref{proj sub for simple}, $H_t\neq xH_t\in Fix(L_{H_t})$ if and only if $x=(b ,ba,ba^2,\ldots, ba^{t-2},ba^{t-1})$ for some $1\neq a\in Fix(\theta)$ and $\theta(b)=ba^{-t}$.

Assume that $t$ and $|Fix(\theta)|$ are coprime. Under this assumption, the mapping
$$\langle a\rangle \longrightarrow \langle a\rangle, \quad x\mapsto x^t$$
is a bijection for every $a\in Fix(\theta)$ and $x^t=1$ if and only if $x=1$. So, if $\theta(b)=ba$ holds for some $b\in G$ and $1\neq a \in Fix(\theta)$ then $a=c^t$ for some $c\neq 1$. On the other hand if $\theta(b)=ba^t$ for some $a\neq 1$ then also $a^t\neq 1$. Therefore $(G,t,\theta)$ is superfaithful if and only if $(G,1,\theta)$ is superfaithful. Thus, the implication (i) $\Rightarrow$ (ii) holds.

To complete the proof of the implication (ii) $\Rightarrow$ (i), we need to show that if $(G,t,\theta)$ is superfaithful then $t$ and $Fix(\theta)$ are coprime.  Assume that $p$ is a prime dividing $|Fix(\theta)|$ and $t$. Then there exists $a\in Fix(\theta)$ of order $p$ and $a^{t}=a^{pt^\prime}=1$. So $\theta(1)=1=1\cdot a^{t}$ and so $(1,a,a^2,\ldots,a^{t-1})H_t\in Fix(L_{H_t})\neq \{H_t\}$.
\end{proof}

\begin{example}\label{ex on simple2} 
Let $G$ be a finite group and $Q=(G,t,\theta)$.
\begin{itemize}
\item[(i)] If $Core_G(Fix(\theta))=1$ and $t$ and $|Fix(\theta)|$ are not coprime, then $Q$ is faithful but not superfaithful.

\item[(ii)] Let $\theta=1$. Then $Q=(G,t,1)$ is superfaithful if and only if $|G|$ and $t$ are coprime. In particular, if $G$ is simple then $Q$ is a simple quandle (thus simple quandles are faithfut but not necessarily superfaithful).
\end{itemize}

\end{example}

Recall that for a quandle $Q$, the equivalence relation $\sigma_Q$ is defined by
\begin{equation}\label{sigma}
a\, \sigma_Q\,b \text{ if and only if } \dis(Q)_a=\dis(Q)_b.
\end{equation}
The blocks of $\sigma_Q$ are subquandles and they are also blocks with respect to the action of $\lmlt(Q)$ \cite[Section 2.3]{Principal}.

\begin{proposition}\label{principal dec}
Let $Q$ be a finite superfaithful quandle. Then $[a]_{\sigma_Q}$ is a principal latin quandle over $N_{\dis(Q)}(\dis(Q)_a)/\dis(Q)_a$ and $N_{\dis(Q)}(\dis(Q)_a)/\dis(Q)_a$ is solvable for every $a\in Q$.
\end{proposition}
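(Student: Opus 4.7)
The plan is to realize $B:=[a]_{\sigma_Q}$ as a principal quandle over $N/K$, where $N=N_{\dis(Q)}(K)$ and $K=\dis(Q)_a$, then to upgrade this description to latin via superfaithfulness, and finally to deduce solvability from a classical theorem on fixed-point-free automorphisms.

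First, I would exhibit the map
$$\varphi: N/K \longrightarrow B, \qquad gK \mapsto g(a).$$
It is well-defined and injective because the stabilizer of $a$ in $N$ is $K$, and its image sits inside $B$ because $g\in N$ forces $\dis(Q)_{g(a)}=gKg^{-1}=K=\dis(Q)_a$. Surjectivity rests on the fact that $B$ lies in the $\dis(Q)$-orbit of $a$, a consequence of the structural results on $\sigma_Q$-blocks in \cite{Principal}: given $b\in B$ and $g\in\dis(Q)$ with $g(a)=b$, the equality $\dis(Q)_b=gKg^{-1}=K$ forces $g\in N$. By idempotence $L_a(a)=a$, so $L_a K L_a^{-1}=\dis(Q)_{L_a(a)}=K$, meaning $L_a$ normalizes $K$; combined with $\dis(Q)\trianglelefteq \lmlt(Q)$ one then sees $L_a$ normalizes $N$ as well, and thus induces an automorphism $\overline{\widehat{L_a}}$ of $N/K$. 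For $g,h\in N$, left-distributivity gives $L_{g(a)}=gL_ag^{-1}$, hence
$$g(a)*h(a)=gL_ag^{-1}h(a)=g\,\widehat{L_a}(g^{-1}h)(a),$$
and $\varphi$ is a quandle isomorphism $B\cong\Q(N/K,\overline{\widehat{L_a}})$.

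Next, for latinness, I would show that every right multiplication $R_c|_B$ is injective. Suppose $L_x(c)=L_y(c)$ for $x,y,c\in B$. Then $L_y^{-1}L_x\in\dis(Q)_c$, and since $c\in B$ we have $\dis(Q)_c=K=\dis(Q)_x$, so $L_y^{-1}L_x$ also stabilizes $x$. Idempotence gives $x=L_x(x)=L_y(x)$, and Lemma \ref{superfQ} yields $x\in Fix(L_y)=\{y\}$, so $x=y$. Finiteness promotes injectivity to bijectivity, so $B$ is latin. In the principal representation, this is equivalent to $\overline{\widehat{L_a}}$ being a fixed-point-free automorphism of the finite group $N/K$.

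Finally, for the solvability of $N/K$, I would invoke the classical theorem that a finite group admitting a fixed-point-free automorphism is solvable. The most delicate step of the argument is the containment $B\subseteq\dis(Q)\cdot a$ used for surjectivity of $\varphi$: a direct combinatorial argument using only superfaithfulness does not seem to rule out $\sigma_Q$-related elements in distinct $\dis(Q)$-orbits, so I plan to rely on the description of $\sigma_Q$-blocks developed in \cite{Principal}.
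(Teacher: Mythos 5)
Your argument is correct, but it is genuinely more self-contained than the paper's, which is a three-step citation chain: the block $S=[a]_{\sigma_Q}$ is a finite superfaithful \emph{semiregular} quandle, hence latin and connected by \cite[Corollary 2.9]{Principal}; connectedness places $S$ inside the $\dis(Q)$-orbit of $a$, so \cite[Theorem 3.4]{Principal} gives the principal representation over $N_{\dis(Q)}(\dis(Q)_a)/\dis(Q)_a$; and solvability comes from Stein's theorem that the displacement group of a finite latin quandle is solvable \cite{Stein2}. You instead build the coset isomorphism $gK\mapsto g(a)$ by hand and prove latinness of the block directly from superfaithfulness ($L_y^{-1}L_x\in\dis(Q)_c=\dis(Q)_x$ forces $x\in Fix(L_y)=\{y\}$ via Lemma \ref{superfQ}), which replaces the semiregularity route entirely. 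Two remarks on your own assessment. First, the step you flag as delicate --- the containment $[a]_{\sigma_Q}\subseteq\dis(Q)(a)$ --- is in fact a free consequence of your latinness argument, which nowhere presupposes it: once $R_a|_B$ is onto, every $b\in B$ equals $L_x(a)=L_xL_a^{-1}(a)$ for some $x\in B$, and $L_xL_a^{-1}\in\dis(Q)$; so you may drop the appeal to the block description in \cite{Principal} and your proof becomes independent of \cite[Corollary 2.9 and Theorem 3.4]{Principal}. Second, the theorem you invoke at the end (a finite group with a fixed-point-free automorphism of \emph{arbitrary} order is solvable) is Rowley's theorem and rests on the classification of finite simple groups --- so it is correct but not elementary; it is of comparable depth to the Stein-type result the paper cites, and indeed the two are essentially equivalent once the block is known to be principal and latin. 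Either endgame is acceptable; yours has the advantage of making the reduction to group theory explicit.
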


\begin{proof}
The block $S=[a]_{\sigma_Q}$ is a finite superfaithful semiregular quandle. Then $S$ is latin and in particular it is connected \cite[Corollary 2.9]{Principal}. Hence $[a]_{\sigma_Q}$ is contained in the orbit of $a$ with respect to $\dis(Q)$. Hence, according to \cite[Theorem 3.4]{Principal}, $[a]_{\sigma}$ is principal over $\dis(S)\cong N_{\dis(Q)}(\dis(Q)_a)/\dis(Q)_a$ that is solvable (the diplacement group of a finite latin quandle is solvable \cite{Stein2}).             
\end{proof}

According to Proposition \ref{principal dec}, finite superfaithful quandles are the disjoint union of principal latin quandles. Note that such a partition can be trivial.

\subsection{Superconnected racks}\label{Sec:Superconnected}

Finite latin quandles are superconnected, but the converse implication fails, although examples seem to be rare. Examples of superconnected non-latin quandles are provided by the family {\it locally strictly simple quandles} studied in \cite{LSS}. The smallest such quandles are {\tt SmallQuandle}(28,i) with $i=3,4,5,6$ in the  \cite{RIG} library of GAP (such quandles have a congruence with latin blocks and latin factor, but they are not latin).

\begin{example}\label{perm rack2}
\text{}
\begin{itemize}

\item[(i)] A permutation rack $(Q,f)$ is connected if and only if $Q=\setof{f^j(a)}{j\in \mathbb{Z}}$ for every $a\in Q$. 
 Let $C$ be a cyclic group generated by $c$ and $|C|=|Q|$. The map
$$(Q,f)\longrightarrow \Aff(C,0,1,c),\quad f^j(a)\mapsto jc$$
is an isomorphism of racks.   
The rack $\Aff(C,0,1,c)$ is generated by any of its elements and then it is superconnected (note that every monogenerated rack arise in this way). 
%
%
\item[(ii)] Let $\mathcal{C}$ be a conjugacy class in a group $G$. Then $\mathcal{C}$ is superconnected if and only if every pair of elements $a,b\in \mathcal{C}$ are conjugate in the subgroup $\langle a,b\rangle$ (see Lemma \ref{2gen superconnected}).

\end{itemize}
\end{example}

 Let $Q$ be a rack, $A$ an abelian group, $\psi\in \aut{A}$ and a map $\theta:Q\times Q\longrightarrow A$. We define the left quasigroup $E=Q\times_{\psi,\theta}A=(Q\times A,*)$ where
\begin{equation}\label{central ext op}
(a,s)*(b,t)=(a*b,(1-\psi)(s)+\psi(t)+\theta_{a,b})
\end{equation}
for every $a,b\in Q$ and $s,t\in A$. Under suitable conditions on $\theta$ and $\psi$, $E$ is a rack \cite[Section 7]{CP} and we say that $E$ is a {\it central extension} of $Q$ by $A$. The projection onto $Q$ is a rack morphism and if $\psi=1$, then its kernel is contained in the congruence $\lambda_{E}$. In this case, following \cite{Eisermann} and \cite{covering_paper}, we say that $E$ is an {\it abelian cover} of $Q$.

Recall that for a rack $Q$ the equivalence relation $\m_Q$ which blocks are $[a]_{\m_Q}=Sg(a)$ is a congruence of $Q$ contained in $\lambda_Q$ \cite[Proposition 7.1]{covering_paper}.
\begin{proposition}
Let $Q$ be a rack. The following are equivalent:
\begin{itemize}
\item[(i)] $Q$ is superconnected.
\item[(ii)] $Q/\m_Q$ is superconnected.
\end{itemize}
If particular, if $Q$ is superconnected then $\lambda_Q=\m_Q$ and $Q$ is an abelian cover of $Q/\m_Q$. 
\end{proposition}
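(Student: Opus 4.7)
The plan is to prove the equivalence via the transitivity criterion of Lemma \ref{criterion for connectedness}, and then to deduce the ``in particular'' clause from Lemma \ref{all faithful} together with an explicit cocycle computation on the fibres of $Q \to Q/\m_Q$.

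The implication (i) $\Rightarrow$ (ii) is immediate since superconnectedness is closed under $\mathbf{H}$ (remarked after Corollary \ref{cor_superconnected}). For (ii) $\Rightarrow$ (i), I would fix an arbitrary subrack $M\leq Q$ and verify that $M$ is connected by applying Lemma \ref{criterion for connectedness} to the congruence $\m_M = \m_Q \cap (M \times M)$. Because $M$ is closed under $\ast$ and $\backslash$, the smallest subrack of $M$ containing $a$ coincides with $Sg(a)$ computed in $Q$ and is entirely contained in $M$; hence every $\m_M$-block equals the corresponding $\m_Q$-block and the natural map $M/\m_M \to Q/\m_Q$, $[a]_{\m_M} \mapsto [a]_{\m_Q}$, is an injective homomorphism. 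Thus $M/\m_M$ embeds in the superconnected quotient $Q/\m_Q$ and is therefore connected. For the stabiliser condition I would observe that $L_a \in \lmlt(M)$ stabilises the block $[a]_{\m_M} = \{L_a^k(a) : k\in\mathbb{Z}\}$ and acts transitively on it (as in Example \ref{perm rack2}(i)), so $\lmlt(M)_{[a]_{\m_M}}$ is transitive on this block. Lemma \ref{criterion for connectedness} then gives that $M$ is connected, whence $Q$ is superconnected.

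Next, $Q/\m_Q$ is idempotent: since $a \ast a \in Sg(a) = [a]_{\m_Q}$, one has $[a] \ast [a] = [a]$; so $Q/\m_Q$ is a superconnected quandle and by Lemma \ref{all faithful} is superfaithful, in particular faithful. The remark from Section \ref{Sec:left} that $Q/\alpha$ faithful implies $\lambda_Q \leq \alpha$ then forces $\lambda_Q \leq \m_Q$, and combined with the reverse inclusion recalled before the statement this yields $\lambda_Q = \m_Q$.

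Finally, to display $Q$ as an abelian cover of $Q/\m_Q$, I would pick a basepoint $e_x$ in each fibre $[x]_{\m_Q}$ and identify it with a fixed cyclic group $C$ via $L_{e_x}^s(e_x) \leftrightarrow s$; the fibres share a common size by connectedness of $Q$ and by Example \ref{perm rack2}(i) each is isomorphic to $\aff(C,0,1,c)$. Using $\lambda_Q = \m_Q$ (so that $L_a$ depends only on $[a]_{\m_Q}$) together with the relation $L_a \circ L_b^t = L_{a \ast b}^t \circ L_a$ obtained by iterating \eqref{LD}, a direct computation gives
\begin{equation*}
(x,s) \ast (y,t) = \bigl(x \ast y,\, t + \theta_{x,y}\bigr),
\end{equation*}
where $\theta_{x,y}$ is the coordinate of $e_x \ast e_y$ in the fibre over $[x \ast y]$; this is precisely the cocycle presentation \eqref{central ext op} with $\psi = 1$. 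The main obstacle is this final verification: turning the abstract equality $\lambda_Q = \m_Q$ into the explicit cocycle form requires careful bookkeeping of how the maps $L_a$ permute fibres, the key input being that $L_{(a,s)}$ is forced to be independent of $s$.
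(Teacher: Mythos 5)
Your proof is correct. For the equivalence (i)$\Leftrightarrow$(ii) you are essentially taking the paper's route: the paper observes that each block $[a]_{\m_Q}=Sg(a)$ is a monogenerated, hence superconnected, subrack (Example \ref{perm rack2}(i)) and invokes Lemma \ref{remark on super}, whose proof is exactly the combination of the embedding $M/\m_M\hookrightarrow Q/\m_Q$ and the transitivity criterion of Lemma \ref{criterion for connectedness} that you carry out by hand; your observation that $\m_M$-blocks coincide with $\m_Q$-blocks because $Sg(a)$ is the same computed in $M$ or in $Q$ is the needed (and correct) justification. The derivation of $\lambda_Q=\m_Q$ via superfaithfulness of the idempotent quotient $Q/\m_Q$ (Lemma \ref{all faithful}) and the remark that $Q/\alpha$ faithful forces $\lambda_Q\leq\alpha$ is also the paper's argument. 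Where you genuinely diverge is the abelian-cover clause: the paper disposes of it by noting that a superconnected rack is homogeneous and citing \cite[Corollary 7.1(5)]{covering_paper}, whereas you construct the cover explicitly, choosing basepoints $e_x$ in the fibres (all of which are orbits of a single left translation and have common cardinality by connectedness, hence are identifiable with one cyclic group $C$) and computing
\begin{equation*}
L_{e_x}\bigl(L_{e_y}^{t}(e_y)\bigr)=L_{e_x\ast e_y}^{t}(e_x\ast e_y)=L_{e_{x\ast y}}^{\,t+\theta_{x,y}}(e_{x\ast y}),
\end{equation*}
using $L_{e_x\ast e_y}=L_{e_{x\ast y}}$, which is exactly where $\lambda_Q=\m_Q$ enters. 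This is a legitimate and more self-contained alternative: it buys independence from the external reference at the cost of the bookkeeping you acknowledge, while the paper's citation is shorter but leaves the cocycle implicit.
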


\begin{proof}
The blocks of $\m_Q$ are subracks since $Q/\m_Q$ is idempotent. The block $[a]_{\m_Q}$ is the subrack generated by $a$ and so it superconnected according to Example \ref{perm rack2}(i). So, the equivalence between (i) and (ii) follows by Lemma \ref{remark on super}.
%

If $Q$ is superconnected, then $Q/\m_Q$ is superconnected and so $Q$ is faithful. Therefore, $\lambda_Q\leq\m_Q\leq\lambda_Q$ and so equality holds. Finally, $Q$ is superconnected and then homogeneous, so we can apply \cite[Corollary 7.1(5)]{covering_paper}, i.e. $Q$ is an abelian cover of $Q/\m_Q$. 
\end{proof}

Some of the contents of Section 2.4 of \cite{Principal} on principal latin quandles extend to principal superconnected quandles. 
%
%
%

\begin{proposition} \label{from Principal}
Let $Q=\Q(G,f)$ be a superconnected quandle. 
\begin{itemize}
\item[(i)] The subquandles of $Q$ are coset with respect to $f$-invariant subgroups of $G$ and they are principal.
\item[(ii)] $\dis_\alpha=\dis^\alpha=\dis(Q)_{[a]_\alpha}$ for every $\alpha\in Con(Q)$.
\item[(iii)]  $Con(Q)\cong \setof{N\unlhd G}{f(N)=N}$ and $Q/\alpha$ is principal for every $\alpha\in Con(Q)$.
\end{itemize}
\end{proposition}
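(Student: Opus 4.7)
The plan is to identify $\dis(Q)$ with a concrete subgroup of $G$ via left translations and then translate each assertion into a group-theoretic statement. Writing $\lambda_g$ for left translation by $g$ in $G$, one checks that $L_a = \lambda_a \circ f \circ \lambda_a^{-1}$ and, using $f\lambda_c f^{-1}=\lambda_{f(c)}$, that
\[
L_a L_b^{-1} = \lambda_{af(a^{-1}b)b^{-1}}.
\]
Hence $\dis(Q)$ embeds in $G$ as a subgroup $D$ acting by left translations. Since $Q$ is connected (being superconnected), $D$ must equal $G$ and the action is regular; in particular each $\lambda_g$ is a quandle automorphism of $\mathcal{Q}(G,f)$, and $Q$ is latin.

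For (i), I would translate by $\lambda_{a^{-1}}$ for some $a\in S$ to reduce to the case $e\in S$. Then $f(S)=L_e(S)\subseteq S$. Since $S$ is connected by superconnectedness, applying the same identification internally embeds $\dis(S)$ as a subgroup $N\leq G$ acting regularly on $S$; with $e\in S$ this forces $S=N\cdot e=N$, an $f$-invariant subgroup, and the restriction gives $S=\mathcal{Q}(N,f|_N)$.

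For (iii), given $\alpha\in Con(Q)$, idempotence makes each block a subquandle, so by (i) the block $N:=[e]_\alpha$ is an $f$-invariant subgroup. Since $\dis(Q)=G$ is transitive and preserves $\alpha$, every block has the form $gN$. The congruence condition, applied to $e\,\alpha\,n$ together with $g\,\alpha\,g$, gives $f(g)^{-1}(nf(n)^{-1})f(g)\in N$ for all $g\in G$ and $n\in N$; by (i) applied to the connected subquandle $S=\mathcal{Q}(N,f|_N)$, the set $\{nf(n)^{-1}:n\in N\}$ generates $N$, whence $N\unlhd G$. Conversely, every $f$-invariant normal $N$ yields the congruence with blocks $gN$ and quotient $\mathcal{Q}(G/N,\bar f)$. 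Part (ii) then follows at once: the inclusions $\dis_\alpha\leq\dis^\alpha\leq\dis(Q)_{[a]_\alpha}$ always hold, the rightmost identifies with $N$ via the above, and the generators $af(a^{-1}b)b^{-1}$ with $a\,\alpha\,b$ already generate $N$, giving equality throughout.

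The main obstacle is (i): verifying that the abstract subquandle $S$ coincides as a set with the concrete subgroup $N\leq G$ produced by the embedding $\dis(S)\hookrightarrow G$. This identification depends on both transitivity of $\dis(S)$ on $S$ (from connectedness) and freeness of the left-translation action of $G$ on itself. Once it is secured, the remaining assertions are routine translations between quandle-theoretic and group-theoretic language.
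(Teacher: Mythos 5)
Your argument is correct, but it takes a genuinely different, self-contained route from the paper's: the paper disposes of the proposition in one line by observing that superconnectedness makes every subquandle connected and then quoting \cite[Lemma 2.7]{Principal} for (i) and \cite[Corollary 2.11]{Principal} for (ii) and (iii). What you have done is essentially reprove those cited results from scratch via the identification $L_aL_b^{-1}=\lambda_{af(a^{-1}b)b^{-1}}$, which realizes $\dis(Q)$ as a group of left translations of $G$. The key steps — that a subquandle through $e$ is $f$-invariant because $L_e=f$, that connectedness of $S$ pins $S$ down as the full subgroup $N=\langle nf(n)^{-1}: n\in S\rangle$ acting regularly, that the congruence condition conjugates the generators $nf(n)^{-1}$ of the block $[e]_\alpha$ into $N$ and hence forces $N\unlhd G$, and that the generators of $\dis_\alpha$ are exactly the conjugates $b(nf(n)^{-1})b^{-1}$, already generating $N=\dis(Q)_{[a]_\alpha}$ — are all sound, and superconnectedness enters precisely where it must, namely to guarantee transitivity of $\dis(S)$ on each subquandle $S$. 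Your version buys transparency and independence from \cite{Principal}; the paper's buys brevity. One caveat: the aside in your opening paragraph that ``$Q$ is latin'' is not justified by what you prove there — connectedness gives $D=G$, and faithfulness gives injectivity of the right translations, but surjectivity of $x\mapsto xf(x)^{-1}$ does not follow in the infinite case (whether superconnected principal quandles are latin is in fact left open by the author). Since latinity is never used in the rest of your argument, that remark should simply be deleted.
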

\begin{proof}
All  subquandles of $Q$ are connected, then we can apply \cite[Lemma 2.7]{Principal} for (i), and \cite[Corollary 2.11]{Principal} for (ii) and (iii).
\end{proof}

\subsection{Commutator theory for superconnected quandles}\label{Sec:commutator}


According to the commutator theory developed in \cite{comm} we can define abelianess and centrality for congruences of arbitrary algebraic structures (e.g. for left quasigroups) using the commutator between congruences (we omit the general definition and we denote the commutator between two congruences $\alpha,\beta$ by $[\alpha,\beta    ]$). Consequently, nilpotence and solvability are defined by using a special chain of congruences defined in analogy with the derived series and the lower central series of groups. The derived series of a left quasigroup $Q$ is defined as
\begin{displaymath}
\gamma^0(Q)=1_Q,\qquad \gamma^{n+1}(Q)=[\gamma^n(Q),\gamma^n(Q)],
\end{displaymath}
and the lower central series as
\begin{displaymath}
\gamma_0(Q)=1_Q,\qquad \gamma_{n+1}(Q)=[\gamma_n(Q),1_Q],
\end{displaymath}
for $n\in\mathbb{N}$. A left quasigroup is solvable (resp. nilpotent) of length $n$ if $\gamma^n(Q)=0_Q$ (resp $\gamma_n(Q)=0_Q$). 

In \cite{CP, semimedial} we adapted the theory of Freese and McKenzie to racks. To this end, we make use of a Galois connection between the congruence lattice of a rack $Q$ and the lattice of {\it admissible subgroups} $Norm(Q)=\setof{N\trianglelefteq \lmlt(Q)}{N\leq \dis(Q)}$. The pair of mappings $\alpha\mapsto\dis_\alpha$ and $N\mapsto\c{N}=\setof{(a,b)\in Q\times Q}{L_a L_b^{-1}\in N}$ provide a monotone Galois connection between $Con(Q)$ and $Norm(Q)$. 
If the mappings $\dis$ and $\c{}$ are mutually inverses lattice isomorphism, we say that $Q$ has the CDSg property (see \cite[Section 3.4]{CP}). For racks abelianess and centrality of congruences are completely determined by the properties of the relative displacement groups \cite[Theorem 1.1]{CP}. In particular, a quandle is nilpotent (resp. solvable) if and only if its displacement group is nilpotent (resp. solvable) \cite[Theorem 1.2]{CP}.

Some of the results stated in \cite{CP} for finite latin quandles, actually apply to the class of superconnected quandles, showing that connectedness is a relevant property also from the commutator theory viewpoint.

\begin{proposition}
Principal superconnected quandles have the CDSg property.
\end{proposition}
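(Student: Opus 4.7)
The plan is to write $Q = \mathcal{Q}(G,f)$, make both lattices $Con(Q)$ and $Norm(Q)$ concrete in terms of $G$ and $f$, and then check the Galois connection term by term. First I would pin down $\dis(Q)$: applying Proposition \ref{from Principal}(ii) to $\alpha = 0_Q$ shows $\dis(Q)_a = 1$, so $\dis(Q)$ acts regularly on $Q$. A short computation showing that $L_a L_b^{-1}$ acts on $G$ as left translation by $af(a^{-1}b)b^{-1}$ then identifies $\dis(Q)$ with $G$ acting on itself by translations. Under this identification conjugation by $L_e$ (for $e = 1_G$) corresponds to $f$, so the lattice $Norm(Q)$ of normal subgroups of $\lmlt(Q)$ contained in $\dis(Q)$ becomes exactly the lattice $\Lambda$ of $f$-invariant normal subgroups of $G$.

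Proposition \ref{from Principal}(iii) already identifies $Con(Q) \cong \Lambda$ via $\alpha \leftrightarrow N_\alpha$, with the blocks of $\alpha$ being the cosets of $N_\alpha$. Under these identifications, Proposition \ref{from Principal}(ii) gives $\dis_\alpha = \dis(Q)_{[a]_\alpha} = aN_\alpha a^{-1} = N_\alpha$, so the map $\dis\colon Con(Q) \to Norm(Q)$ is precisely the composite of the two correspondences and is therefore a lattice isomorphism.

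The remaining task is to verify that $\c{}$ inverts $\dis$. For $N \in Norm(Q)$ corresponding to $N' \in \Lambda$, unwinding the definition gives $a\,\c{N}\,b$ iff $af(a^{-1}b)b^{-1} \in N'$; substituting $m = a^{-1}b$ and using normality of $N'$ this becomes $f(m)m^{-1} \in N'$. The blocks of $\c{N}$ are therefore cosets of the set $M = \{m \in G : f(m)m^{-1} \in N'\}$, i.e.\ the preimage in $G$ of $Fix(\bar f) \leq G/N'$. The inclusion $N' \subseteq M$ is automatic from $f(N') = N'$.

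The hardest step, and the one that genuinely uses the hypothesis, is the reverse inclusion $M \subseteq N'$. Since the class of superconnected quandles is closed under $\textbf{H}$, the quotient $Q/\alpha_{N'} \cong \mathcal{Q}(G/N', \bar f)$ is again superconnected and hence superfaithful. Applying Lemma \ref{superfQ} to the identity coset yields $Fix(L_{\bar e}) = \{\bar e\}$, which on the principal quandle $\mathcal{Q}(G/N', \bar f)$ reads $Fix(\bar f) = \{\bar e\}$. Hence $M = N'$, so $\c{N} = \alpha_{N'}$, and $\c{}$ inverts $\dis$, proving the CDSg property. The delicate part is really only this last step, since it requires extracting a fixed-point statement for every induced automorphism on every principal quotient simultaneously from the single hypothesis of superconnectedness; everything else is bookkeeping for principal quandles.
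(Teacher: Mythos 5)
Your argument is correct, but it takes a genuinely different route from the paper's. The paper's proof is essentially two lines: every factor of a superconnected quandle is faithful, Proposition \ref{from Principal}(ii) gives $\dis_\alpha=\dis^\alpha$ for every $\alpha\in Con(Q)$, and then \cite[Proposition 3.13]{CP} (a general criterion for the CDSg property in terms of these data) is invoked as a black box. You instead unwind everything explicitly for $Q=\Q(G,f)$: identifying $\dis(Q)$ with $G$ acting regularly by left translations (regularity needs both $\dis(Q)_a=1$, i.e.\ your application of Proposition \ref{from Principal}(ii) to $0_Q$, and transitivity from connectedness), identifying $Norm(Q)$ with the $f$-invariant normal subgroups via conjugation by $L_e=f$, matching this with $Con(Q)$ through Proposition \ref{from Principal}(iii), and then checking by hand that $\c{}$ inverts $\dis$. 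Your key step --- that $M/N'=Fix(\bar f)$ is trivial because $\Q(G/N',\bar f)$ is a superconnected, hence superfaithful, quandle and Lemma \ref{superfQ}(iii) applies at the identity coset --- is exactly where faithfulness of all factors enters the paper's argument too: compare the chain $\alpha\leq\c{\dis_\alpha}\leq\c{\dis^\alpha}=\alpha$ in the proof of Proposition \ref{commutator for superconnected}, where the equality $\c{\dis^\alpha}=\alpha$ is precisely faithfulness of $Q/\alpha$. What your version buys is self-containedness: one sees concretely why the Galois connection collapses to the correspondence between congruences and $f$-invariant normal subgroups for principal quandles, at the cost of redoing the bookkeeping that \cite[Proposition 3.13]{CP} packages; the surjectivity of $\dis$ onto $Norm(Q)$ coming from Proposition \ref{from Principal}(iii) is what lets you conclude $\dis_{\c{N}}=N$ for \emph{every} $N\in Norm(Q)$, and you do use it, so the argument is complete.
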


\begin{proof}
Every superconnected quandle is faithful. If $Q$ is a principal superconnected quandle and $\alpha\in Con(Q)$, then by virtue of Proposition \ref{from Principal}(ii) $\dis^\alpha=\dis_\alpha$. Thus, we can apply \cite[Proposition 3.13]{CP} and it follows that principal superconnected quandles have the CDSg property.
\end{proof}

\begin{proposition}\label{commutator for superconnected}
Let $Q$ be a superconnected quandle. Then: 
\begin{enumerate}
\item[(i)] $\alpha=\c{\dis_\alpha}$ and $[\alpha,\beta]=[\beta,\alpha]=\c{[\dis_{\alpha},\dis_\beta]}$ for every $\alpha,\beta\in Con(Q)$.	
\item[(ii)] The mapping $\dis$ is injective and the mapping $\c{}$ is surjective.
	\end{enumerate}
\end{proposition}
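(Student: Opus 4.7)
The plan is to establish the identity $\alpha = \c{\dis_\alpha}$ of part (i) directly, and then to derive the commutator formula and all of part (ii) as formal consequences. The underlying observation is that the class of superconnected quandles is closed under homomorphic images, so every quotient $Q/\alpha$ of a superconnected quandle $Q$ is again superconnected, hence superfaithful by Lemma \ref{all faithful}, and in particular faithful. This quotient-faithfulness is the single extra ingredient that will tighten the general Galois connection between $Con(Q)$ and $Norm(Q)$.

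For $\alpha = \c{\dis_\alpha}$, the inclusion $\alpha \leq \c{\dis_\alpha}$ is immediate from the definitions. For the reverse inclusion, suppose $(a,b) \in \c{\dis_\alpha}$, i.e.\ $L_a L_b^{-1} \in \dis_\alpha$. The generators $L_c L_d^{-1}$ (with $c\,\alpha\,d$) of $\dis_\alpha$ all lie in $\dis^\alpha$, since $\pi_\alpha(L_c L_d^{-1}) = L_{[c]_\alpha} L_{[d]_\alpha}^{-1} = 1$; and because $\dis^\alpha$ is a normal subgroup of $\lmlt(Q)$, it contains the whole normal closure, so $\dis_\alpha \leq \dis^\alpha$. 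Applying $\pi_\alpha$ to $L_a L_b^{-1}$ therefore yields $L_{[a]_\alpha} = L_{[b]_\alpha}$ in $\lmlt(Q/\alpha)$, and faithfulness of $Q/\alpha$ forces $[a]_\alpha = [b]_\alpha$, i.e.\ $a\,\alpha\,b$.

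For the commutator formula $[\alpha,\beta] = \c{[\dis_\alpha,\dis_\beta]}$, I would invoke the rack commutator machinery of \cite[Theorem 1.1]{CP}. The general theory always yields one inclusion, and, once $\alpha = \c{\dis_\alpha}$ is known to hold for every congruence, the Galois connection restricts to a bijection between $Con(Q)$ and its image under $\dis$, which is enough to upgrade the inclusion to an equality. The symmetry $[\alpha,\beta] = [\beta,\alpha]$ then follows from the symmetry of the group commutator $[\dis_\alpha,\dis_\beta] = [\dis_\beta,\dis_\alpha]$ together with the fact that $\c{}$ is order-preserving and applied to the same subgroup on both sides.

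Part (ii) is a formal consequence of part (i). If $\dis_\alpha = \dis_\beta$, then $\alpha = \c{\dis_\alpha} = \c{\dis_\beta} = \beta$, so $\dis$ is injective; and every $\alpha \in Con(Q)$ equals $\c{\dis_\alpha}$, placing $\alpha$ in the image of $\c{}$. The main obstacle is the commutator step: the delicate point is checking that the precise hypothesis of \cite[Theorem 1.1]{CP} previously applied to finite latin quandles is in fact already implied by the identity $\alpha = \c{\dis_\alpha}$ in the superconnected setting; everything else is a routine use of the Galois connection once quotient-faithfulness is in place.
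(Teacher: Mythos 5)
Your proof of the identity $\alpha=\c{\dis_\alpha}$ is correct and is essentially the paper's argument made explicit: the paper notes that every factor of a superconnected quandle is superconnected, hence superfaithful by Lemma \ref{all faithful} and in particular faithful, and then cites \cite[Proposition 3.7]{CP} for the chain $\alpha\leq\c{\dis_\alpha}\leq\c{\dis^\alpha}=\alpha$; your computation with $\pi_\alpha$ and $\dis_\alpha\leq\dis^\alpha$ is exactly what lies behind that citation. Part (ii) as a formal consequence of (i) is also fine.

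The gap is in the commutator formula. You assert that once $\alpha=\c{\dis_\alpha}$ holds for all congruences, ``the Galois connection restricts to a bijection \dots which is enough to upgrade the inclusion to an equality.'' That is not a valid general principle: the Freese--McKenzie commutator $[\alpha,\beta]$ is defined by a term condition, and its identification with $\c{[\dis_\alpha,\dis_\beta]}$ is not a formal consequence of the Galois maps being mutually inverse on $Con(Q)$ --- it requires the specific arguments of \cite[Propositions 3.8 and 5.5]{CP}, whose hypothesis is precisely that the relevant factors of $Q$ are faithful. You in fact establish that hypothesis in your opening paragraph (every quotient of a superconnected quandle is superconnected, hence faithful), but then do not use it for this step, routing the argument instead through an unjustified lattice-theoretic shortcut which you yourself flag as unverified. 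The fix is short: drop the Galois-connection upgrade and apply \cite[Propositions 3.8 and 5.5]{CP} directly, citing quotient-faithfulness as the needed hypothesis; the symmetry $[\alpha,\beta]=[\beta,\alpha]$ then does follow, as you say, from $[\dis_\alpha,\dis_\beta]=[\dis_\beta,\dis_\alpha]$ once the formula is in place.
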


\begin{proof}
	All factors of superconnected quandles are superconnected and then faithful. So, according to \cite[Proposition 3.7]{CP} we have that $\alpha\leq \c{\dis_\alpha}\leq \c{\dis^\alpha}=\alpha$ and so $\alpha=\c{\dis_\alpha}$ for every $\alpha\in Con(Q)$. For the other statements, we can apply directly \cite[Propositions 3.8 and 5.5]{CP} since all the factor of $Q$ are faithful.
\end{proof}


Nilpotent superconnected quandles are indeed latin. 
\begin{theorem}\label{nilpotence}
Nilpotent superconnected quandles are latin.
\end{theorem}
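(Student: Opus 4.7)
The plan is to induct on the nilpotence class $n$ of $Q$, exploiting the dictionary between quandle nilpotence and displacement-group nilpotence \cite[Theorem 1.2]{CP}, together with the fact that both nilpotence and superconnectedness are preserved by $\textbf{H}$ and $\textbf{S}$. Thanks to Lemma \ref{all faithful}, every subquandle and every quotient appearing in the induction will remain superfaithful and connected.

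For the base case $n \leq 1$ the congruence $1_Q$ is abelian. The structure theorem for abelian connected quandles (the Mal'cev-theoretic affine representation, compatible with the commutator picture of \cite{CP}) then gives $Q \cong \aff(A, f)$ for some abelian group $A$ and some $f \in \aut{A}$. Superfaithfulness translates to $\ker(1 - f) = 0$ (since $Fix(L_a) = a + \ker(1 - f)$), while connectedness translates to $(1 - f)(A) = A$. Hence $1 - f$ is a bijection of $A$ and $R_b(a) = (1 - f)(a) + f(b)$ is bijective, so $Q$ is latin.

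For the inductive step, suppose the statement for class strictly less than $n \geq 2$, and set $\zeta = \gamma_{n-1}(Q) \neq 0_Q$. From $[\zeta, 1_Q] = \gamma_n(Q) = 0_Q$ the congruence $\zeta$ is central (and in particular abelian), so $Q/\zeta$ is superconnected and nilpotent of class at most $n - 1$, hence latin by induction, and each block $[a]_\zeta$ is a superconnected subquandle with the abelian structure inherited from $\zeta$, hence latin by the base case applied to the block. To combine these, note that centrality of $\zeta$ forces $\dis_\zeta \leq Z(\dis(Q))$; together with connectedness of $Q/\zeta$, the covering theory of \cite{covering_paper} identifies $Q$ with a central extension of the form \eqref{central ext op} for a single $\psi \in \aut{A}$ (coming from $\widehat{L_a}$ on a fiber $A$). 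Since each block is isomorphic to $\aff(A, \psi)$, its latinity gives that $1 - \psi$ is bijective; inspection of $R_{(b, t)}(a, s) = (a * b,\, (1 - \psi)(s) + \psi(t) + \theta_{a, b})$ (solving for $a$ via latinity of $Q/\zeta$ and for $s$ via bijectivity of $1 - \psi$) then shows that right multiplications on $Q$ are bijective, completing the induction. The hardest step is this last identification of the extension with a uniform global $\psi$; this is the precise point at which centrality of $\zeta$ and connectedness of the base $Q/\zeta$ must be combined.
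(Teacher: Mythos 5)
Your proof is correct and follows essentially the same route as the paper: induction on nilpotence length, the abelian base case via the affine representation (the paper simply cites \cite[Corollary 2.6]{Principal}), and then realizing $Q$ as a central extension of $Q/\gamma_{n}(Q)$ via \cite[Proposition 7.8]{CP} with latin base and latin (abelian) blocks, so that $1-\psi$ and hence the right multiplications are bijective. The only cosmetic difference is that the paper justifies the central-extension representation by checking transitivity of $\langle L_bL_a^{-1} : b\in[a]\rangle$ on the blocks (automatic here since the blocks are connected subquandles), whereas you appeal to connectedness of the base; both are the hypotheses of the same covering-theoretic result.
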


\begin{proof}
If $Q$ is abelian and superconnected, then it is faithful and connected and so latin \cite[Corollary 2.6]{Principal}. Let $Q$ be nilpotent of length $n+1$, i.e. $\gamma_n(Q)$ is central. The group 
$$D=\langle L_b L_a^{-1},\, b\in [a]_{\gamma_n(Q)}\rangle\leq \dis_{\gamma_n(Q)}$$
 is transitive on the block of $[a]_{\gamma_n(Q)}$ and $Q$ is connected. Then we can apply \cite[Proposition 7.8]{CP} and we have that $Q$ is a central extension of $Q/\gamma_n(Q)$, i.e. the quandle operation of $Q$ is defined as in \eqref{central ext op} by
 $$R_{(a,s)}(b,t)=(b,t)*(a,s)=(R_a(b),((1-\psi)(t)+\psi(s)+\theta_{a,b}).$$
  By induction on the nilpotency length, $Q/\gamma_n(Q)$ is latin and the blocks of $\gamma_n(Q)$ are abelian and therefore latin, i.e. $1-\psi$ is bijective. Therefore the right multiplication $R_{(a,s)}$ has inverse $R_{(a,s)}^{-1}(b,t)=(R_a^{-1}(b),(1-\psi)^{-1}(t-\psi(s)-\theta_{R_a^{-1}(b),a})$ and so $Q$ is latin.
\end{proof}

The converse of Theorem \ref{nilpotence} does not hold. Indeed there exist infinite affine latin quandles which are not superconnected (see Example \ref{Z and Q}(ii)). 

%
%
%

%
%
%
%
%
%

The biggest central congruence of a quandle $Q$ is called the {\it center of $Q$} (the analog of the center of a group) and denoted by $\zeta_Q$. According to \cite[Proposition 5.8]{CP} (and \cite[Proposition 3.14]{semimedial}) we have 
$$a\,\zeta_Q\, b \, \text{ if and only if } \, L_a L_b^{-1}\in Z(\dis(Q)) \, \text{ and } \,a\, \sigma_Q\,b,$$
where $\sigma_Q$ is defined in \eqref{sigma}.

\begin{lemma}\label{proj sub}
	Let $Q$ be a finite connected quandle and $\alpha\leq \zeta_Q$. If $Q$ is superfaithful then $Q/\alpha$ is superfaithful.
\end{lemma}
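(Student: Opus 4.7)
The approach will be by contradiction, using the criterion of Lemma \ref{superfQ}. I will assume that $Q/\alpha$ fails to be superfaithful, so that there exist classes $[a]_\alpha\neq [b]_\alpha$ in $Q/\alpha$ with $[a]_\alpha*[b]_\alpha = [b]_\alpha$, equivalently $(a*b)\,\alpha\,b$, and then try to promote this collapse from $Q/\alpha$ to an actual collapse in $Q$, contradicting that $Q$ itself is superfaithful.

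The main tool will be the central extension description of $Q$ over $Q/\alpha$ supplied by the commutator theory of \cite{CP}. Since $\alpha\leq\zeta_Q$ is central and $Q$ is connected, the machinery invoked in the proof of Theorem \ref{nilpotence} (namely \cite[Proposition 7.8]{CP}) realises $Q$ as a central extension $Q/\alpha\times_{\psi,\theta}A$ by a finite abelian group $A$, for some fixed $\psi\in\aut{A}$ and cocycle $\theta$; the $\alpha$-classes become the fibres $\{\bar x\}\times A$, and the multiplication is given by \eqref{central ext op}. Superfaithfulness of $Q$ then pins down $\psi$: each fibre is a subquandle of $Q$ isomorphic to $\aff(A,\psi)$, hence itself superfaithful, and a direct calculation shows that the fixed-point set of $L_{(\bar x_0,s)}$ inside its fibre equals $\{\bar x_0\}\times(s+\ker(1-\psi))$; Lemma \ref{superfQ}(iii) then forces $\ker(1-\psi)=0$, and finiteness of $A$ makes $1-\psi$ a bijection.

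With $1-\psi$ bijective, the contradiction will become explicit. Writing $\bar a=[a]_\alpha$ and $\bar b=[b]_\alpha$, for any $t\in A$ I set $s := t-(1-\psi)^{-1}(\theta_{\bar a,\bar b})$; a direct substitution into \eqref{central ext op} yields $(\bar a, s)*(\bar b, t) = (\bar b, t)$. Hence $(\bar b, t)$ is a fixed point of $L_{(\bar a, s)}$ distinct from $(\bar a, s)$ (because $\bar a\neq\bar b$), contradicting Lemma \ref{superfQ}(iii) applied to $Q$.

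The hardest step will be justifying the central extension form itself: \cite[Proposition 7.8]{CP} requires $\dis_\alpha$ to be transitive on each $\alpha$-block, which is not literally part of the hypothesis $\alpha\leq\zeta_Q$. I expect this to be recovered from the fact that $\dis_\alpha\leq Z(\dis(Q))$ acts semi-regularly on the transitive $\dis(Q)$-set $Q$: each $\dis_\alpha$-orbit sits inside an $\alpha$-class, and a counting argument comparing orbit size to class size (via the coset action of $\dis(Q)/\dis(Q)_{[b]_\alpha}$) should force equality. Once this structural set-up is in place, the rest reduces to the linear computation above.
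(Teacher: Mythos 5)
Your overall architecture is sound and genuinely different from the paper's: you pass through the central extension $Q\cong Q/\alpha\times_{\psi,\theta}A$, extract bijectivity of $1-\psi$ from superfaithfulness of the fibres, and then solve explicitly for a lift of the offending fixed point. That final computation is correct, and the contradiction with Lemma \ref{superfQ}(iii) is valid. The paper instead argues directly at the level of the block stabilizer: from $[a]*[b]=[b]$ one gets $L_aL_b^{-1}\in\dis(Q)_{[b]_\alpha}$, the decomposition $\dis(Q)_{[b]_\alpha}=\dis_\alpha\times\dis(Q)_b$ (with $\dis_\alpha=\setof{L_cL_b^{-1}}{c\,\alpha\,b}$) yields $L_a=hL_c$ with $h\in\dis(Q)_c$, hence $a*c=c$, and superfaithfulness forces $a=c\,\alpha\,b$. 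No extension form is ever needed.

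The genuine gap is exactly the step you flag as the hardest one, and your proposed repair does not close it. Semiregularity of the central subgroup $\dis_\alpha$ gives that every $\dis_\alpha$-orbit has size $|\dis_\alpha|$ and sits inside an $\alpha$-class, and connectedness gives that all $\alpha$-classes have a common size $m$; but counting only yields $|Q|=(\#\text{orbits})\cdot|\dis_\alpha|=(\#\text{classes})\cdot m$ with $|\dis_\alpha|\le m$, and nothing forces the two partitions to coincide — both ratios can exceed $1$ simultaneously. Forcing $|\dis_\alpha|=m$ is \emph{equivalent} to the product decomposition $\dis(Q)_{[b]_\alpha}=\dis_\alpha\,\dis(Q)_b$, i.e. precisely the content of \cite[Corollary 3.2]{GB} that the paper's proof invokes; that result in turn genuinely uses the second half of the definition of $\zeta_Q$ (the condition $\alpha\le\sigma_Q$, i.e. $\dis(Q)_a=\dis(Q)_b$ for $a\,\alpha\,b$), which your sketch never touches. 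That the extra input is really needed is visible from the fact that mere centrality of $\dis_\alpha$ in a connected quandle is compatible with $\dis_\alpha=1$ while the blocks are nontrivial (as happens for the Cayley kernel of a non-faithful connected quandle). So to complete your route you must import \cite[Corollary 3.2]{GB} anyway, on top of \cite[Proposition 7.8]{CP} and the cocycle bookkeeping — at which point the paper's three-line argument is already available and strictly shorter.
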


\begin{proof}
Assume that $[a]*[b]=[b]$. Then $L_a L_b^{-1}\in \dis(Q)_{[b]}$. According to \cite[Corollary 3.2]{GB} the block stabilizer is the direct product of $\dis_\alpha=\setof{L_c L_b^{-1}}{c\, \alpha\,b}$ and the stabilizer of $b$ in $\dis(Q)$. Thus, there exists $c\,\alpha\,b$ and $h\in \dis(Q)_b=\dis(Q)_c$ such that $L_a L_b^{-1}=h L_c L_b^{-1}$. Then $L_a=h L_c\in \lmlt(Q)_c$ and accordingly $a*c=c$. Then, $a=c\,\alpha\, b$ and so $Fix(L_{[a]})=\{[a]\}$ and Lemma \ref{superfQ} applies.
\end{proof}

\begin{proposition}\label{nilpotent latin}
	Let $Q$ be a finite nilpotent quandle. The following are equivalent:
\begin{itemize}
\item[(i)] $Q$ is connected and superfaithful.
\item[(ii)] $Q$ is latin. 
\end{itemize}	
\end{proposition}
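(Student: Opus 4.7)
The plan is to obtain (ii) $\Rightarrow$ (i) directly from observations already in the paper: latin left quasigroups are superfaithful (as noted immediately after the definition of latin in Section \ref{Sec:left}), and any latin quandle $Q$ is connected since $b=(b/a)\ast a\in\lmlt(Q)(a)$ for every $a,b\in Q$. For (i) $\Rightarrow$ (ii) I would induct on the nilpotency length $n$ of $Q$.

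For the base case $n=1$, $Q$ is abelian and hence of the form $\Aff(A,\psi)$ over a finite abelian group $A$ for some $\psi\in\aut{A}$; faithfulness forces $1-\psi$ to be injective and connectedness forces it to be surjective (as already exploited in Example \ref{Z and Q}), so $1-\psi$ is bijective by finiteness and $Q$ is latin, essentially by \cite[Cor.~2.6]{Principal}.

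For the inductive step ($n\geq 2$), set $\alpha=\gamma_{n-1}(Q)$, which is central because $\gamma_n(Q)=[\alpha,1_Q]=0_Q$. Then $Q/\alpha$ is connected (as a homomorphic image of a connected quandle), superfaithful by Lemma \ref{proj sub} (applied with $\alpha\leq\zeta_Q$), and nilpotent of length $n-1$, hence latin by the inductive hypothesis. Following the proof of Theorem \ref{nilpotence}, I would invoke \cite[Prop.~7.8]{CP} to present $Q$ as a central extension of $Q/\alpha$ by a finite abelian group $A$ with structural automorphism $\psi\in\aut{A}$, so that the multiplication has the form \eqref{central ext op}. Each block of $\alpha$ is then a subquandle isomorphic to $\Aff(A,\psi)$; being a subquandle of the superfaithful $Q$ it is faithful, which forces $1-\psi$ to be injective on $A$ and hence bijective by finiteness. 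The explicit formula for $R_{(a,s)}^{-1}$ displayed at the end of the proof of Theorem \ref{nilpotence} then provides a two-sided inverse of every right translation of $Q$, so $Q$ is latin.

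The main obstacle is to justify the invocation of \cite[Prop.~7.8]{CP} in the absence of superconnectedness: in Theorem \ref{nilpotence} superconnectedness is used to make the group $D=\langle L_b L_a^{-1}:b\in[a]_\alpha\rangle$ act transitively on $[a]_\alpha$, and here only superfaithfulness is available. However, connectedness of $Q$ combined with Lemma \ref{criterion for connectedness} and the decomposition $\lmlt(Q)_{[a]_\alpha}=\dis(Q)_{[a]_\alpha}\langle L_a\rangle$ (inherited from Lemma \ref{disQ combinatorial} via the fact that $L_a$ fixes $a$) already forces $\dis(Q)_{[a]_\alpha}$ to be transitive on $[a]_\alpha$, which should suffice to feed into the central-extension machinery of \cite{CP}.
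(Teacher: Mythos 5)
Your overall skeleton agrees with the paper's: the direction (ii) $\Rightarrow$ (i) is disposed of by the general facts you quote, and (i) $\Rightarrow$ (ii) is an induction on nilpotency length in which the quotient by a central congruence is shown to remain superfaithful via Lemma \ref{proj sub} (the paper quotients by $\zeta_Q$ itself, you by $\gamma_{n-1}(Q)\leq\zeta_Q$; either works). Where you genuinely diverge is the lifting step: the paper concludes by citing \cite[Lemma 3.4]{GB} to pass latinity from $Q/\zeta_Q$ back to $Q$, whereas you reconstruct that step by hand through the central-extension presentation of \cite[Proposition 7.8]{CP}, as in the proof of Theorem \ref{nilpotence}. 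Your reconstruction is sound except at exactly the point you flag. Lemma \ref{criterion for connectedness} together with $\lmlt(Q)_{[a]_\alpha}=\dis(Q)_{[a]_\alpha}\langle L_a\rangle$ does yield transitivity of the set-wise block stabilizer $\dis(Q)_{[a]_\alpha}$ on $[a]_\alpha$; but that is not the hypothesis of \cite[Proposition 7.8]{CP}. What Theorem \ref{nilpotence} actually verifies before invoking it is transitivity on the block of the much smaller group $D=\langle L_bL_a^{-1}\,:\,b\,\alpha\,a\rangle\leq\dis_\alpha$, and for an arbitrary congruence block-stabilizer transitivity does not descend to $\dis_\alpha$ (for instance $\dis_{\lambda_Q}$ is trivial while the blocks of $\lambda_Q$ can be large in a connected non-faithful quandle). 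So ``should suffice'' hides a real step.

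The step can be closed with a tool the paper already uses inside Lemma \ref{proj sub}: by \cite[Corollary 3.2]{GB}, for a finite connected quandle and $\alpha\leq\zeta_Q$ one has $\dis(Q)_{[a]_\alpha}=\dis_\alpha\times\dis(Q)_a$ with $\dis_\alpha=\setof{L_cL_a^{-1}}{c\,\alpha\,a}$. Writing any $g\in\dis(Q)_{[a]_\alpha}$ as $g=dh$ with $d\in\dis_\alpha$ and $h\in\dis(Q)_a$ gives $g(a)=d(a)$, so the $\dis_\alpha$-orbit of $a$ is all of $[a]_\alpha$ and \cite[Proposition 7.8]{CP} applies; the rest of your argument (idempotency forcing $\theta_{a,a}=0$, faithfulness of the fibre $\aff(A,\psi)$ forcing $1-\psi$ injective hence bijective, and the explicit inverse of $R_{(a,s)}$) then goes through verbatim. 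With that patch your proof is correct; in effect you have inlined a proof of the lemma the paper cites, which buys self-containedness at the cost of redoing work that \cite{GB} already packages.
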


\begin{proof}
(i) $\Rightarrow$ (ii) Let us proceed by induction on the nilpotency length. If $Q$ is abelian then it follows by \cite[Corollary 2.6]{Principal}, since $Q$ is connected and faithful. Let $Q$ be nilpotent of length $n$. By Lemma \ref{proj sub}, $Q/\zeta_Q$ is superfaithful and then by induction $Q/\zeta_Q$ is latin. So we can apply \cite[Lemma 3.4]{GB} and conclude that $Q$ is latin.

(ii) $\Rightarrow$ (i) True in general.
\end{proof}

Note that the superconnected quandles of size 28 mentioned earlier in the previous section are solvable but not latin, so Theorem \ref{nilpotence} does not extend to the solvable case.  Nevertheless finite solvable superconnected quandles have the {\it Lagrange property}, i.e. the size of every subalgebra divides the size of the quandle (extending a known result for left distributive quasigroups \cite{GALK3}).

In the proof of the following Proposition we are using that the blocks of a congruence of a connected left quasigroup have all the same size \cite[Lemma 2.5]{CP}.

\begin{Proposition}
Finite solvable superconnected quandles have the Lagrange property.\end{Proposition}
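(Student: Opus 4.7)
The plan is to induct on the solvability length of $Q$. The base case is that of abelian $Q$: then $Q$ is superconnected and abelian, hence by Lemma \ref{all faithful} faithful and in particular connected. An abelian, connected, faithful quandle is affine, and in the finite case latin by \cite[Corollary 2.6]{Principal}. The Lagrange property for finite latin quandles is Galkin's theorem \cite{GALK3}, so the base case holds.

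For the inductive step, suppose $Q$ is solvable of length $n+1$ and set $\alpha=\gamma^n(Q)$; then $\alpha$ is an abelian congruence since $[\alpha,\alpha]=\gamma^{n+1}(Q)=0_Q$. The factor $Q/\alpha$ is superconnected (the class is closed under $\textbf{H}$) and solvable of length at most $n$, so by induction it enjoys the Lagrange property. Moreover, since $Q$ is idempotent each block $B=[a]_\alpha$ is a subquandle of $Q$, superconnected as a subquandle of a superconnected quandle, and abelian — the latter because the restriction of $\alpha$ to $B$ is the total relation $1_B$, which is abelian in the commutator of $B$ as a consequence of $\alpha$ being abelian (this is the quandle-theoretic analogue of the standard fact in commutator theory, using the apparatus of \cite{CP}). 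Hence $B$ is a finite abelian, connected, faithful quandle and therefore latin by the same reasoning as in the base case, so the Lagrange property holds inside $B$.

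Now let $M$ be an arbitrary subalgebra of $Q$ and put $\beta=\alpha\cap(M\times M)\in Con(M)$. The natural map $M/\beta\hookrightarrow Q/\alpha$ realises $M/\beta$ as a subquandle of $Q/\alpha$, so $|M/\beta|$ divides $|Q/\alpha|$ by the inductive hypothesis applied to $Q/\alpha$. For every $a\in M$, the block $[a]_\beta=M\cap[a]_\alpha$ is a subquandle of the latin quandle $[a]_\alpha$, whence $|[a]_\beta|$ divides $|[a]_\alpha|$. Because both $Q$ and $M$ are connected (superconnectedness of $Q$), the blocks of $\alpha$ in $Q$ and those of $\beta$ in $M$ are equinumerous, by \cite[Lemma 2.5]{CP}, so $|Q|=|Q/\alpha|\cdot|[a]_\alpha|$ and $|M|=|M/\beta|\cdot|[a]_\beta|$. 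Multiplying the two divisibilities gives $|M|$ divides $|Q|$, as required.

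The main obstacle is the identification of the blocks of $\alpha=\gamma^n(Q)$ as latin subquandles; this rests on showing that the restriction of an abelian congruence to one of its blocks is abelian on that block (in the commutator sense of \cite{CP}), which plays here the same role that central extensions play in the proof of Theorem \ref{nilpotence}. Once this is in place, the rest of the argument is a clean combinatorial multiplication of block counts, made possible by the equal-size-of-blocks phenomenon for congruences on connected left quasigroups.
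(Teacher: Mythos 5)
Your proof is correct and follows essentially the same route as the paper: induction on solvability length, passing to the abelian congruence $\gamma^{n}(Q)$, using that blocks of a congruence on a connected left quasigroup are equinumerous, and multiplying the two divisibility relations $|M/\beta|\mid|Q/\alpha|$ and $|M\cap[a]_\alpha|\mid|[a]_\alpha|$. The only (harmless) variation is that you handle the abelian pieces by observing they are latin and invoking the Lagrange property of finite latin quandles, whereas the paper uses the correspondence between subquandles of an affine quandle and submodules; you are also right, and more explicit than the paper, that one must know the blocks of the abelian congruence are themselves abelian subquandles.
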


\begin{proof}
If $Q$ is abelian, the statement is true because subquandles correspond to submodules with respect to the structure given by the affine representation \cite[Proposition 2.18]{Principal}. Let $Q$ be solvable of length $n+1$ i.e. $\gamma^{n}(Q)$ is an abelian cogruence and let $M$ be a subquandle. Then $|M|=|M/\gamma^{n}(Q)||[a]\bigcap M|$. Since $[a]$ is affine, $|M\bigcap [a]|$ divides $|[a]|$ and since $Q/\gamma^{n}(Q)$ is solvable of length $n$, by induction we have that $|M/\gamma^{n}(Q)|$ divides $|Q/\gamma^{n}(Q)|$. Therefore, $|M|$ divides $|Q|=|Q/\gamma^{n}(Q)||[a]|$. 
\end{proof}

\section{Involutory quandles}\label{Sec:Involutory}

\subsection{Two generated involutory quandles}

Recall that a quandle $Q$ satisfying the identity $x*(x*y)\approx y$ is called \emph{involutory}. 

The local properties of $2$-generated subquandles determine the global properties such as superconnectedness and latinity (see Lemmas \ref{2gen superconnected} and \ref{2gen latin}). A description of the free involutory quandle on $2$ generators is given in \cite[Corollary 10.4]{J}, namely such quandle is isomorphic to $\Aff(\mathbb{Z},-1)$.

We investigate the properties of involutory quandles according to the properties of the canonical generators of the displacement group. A similar approach was take in \cite{Nobu} using the concept of {\it cycle}. The main original contribution is to partially extend the main result of \cite{Nobu} to the infinite case.

Let $Q$ be an involutory quandle and $a,b\in Q$. Following \cite{Nobu} we define the {\it cycle generated by $a$ with base $b$} as
$$C(a,b)=\setof{a^k}{k\in \mathbb{Z}},\, \text{ where }\, a^k=\begin{cases} (L_a L_b)^i(b),\, \text{ if }k=2i,\\(L_a L_b)^i(a),\, \text{ if }k=2i+1. \end{cases}.$$
According to \cite[Corollary 5.4]{semimedial}, $Sg(a,b)=a^{L_a L_b}\cup b^{L_a L_b}$ and so we have that $C(a,b)$ is the subquandle generated by $a,b$. If $|L_a L_b|$ is finite we can define $ord_{a,b}=\min_{k>0 }\{a^k=b\}$.

\begin{proposition}\label{free involutory}
Let $Q$ be an involutory quandle generated by $a,b\in Q$. Then $\dis(Q)$ is the cyclic group generated by $L_a L_b$.
\end{proposition}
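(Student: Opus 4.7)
The plan is to show the inclusion $\dis(Q)\subseteq \langle L_a L_b\rangle$ explicitly by reducing an arbitrary generator $L_x L_y^{-1}$ to a power of $w:=L_a L_b$; the reverse inclusion $\langle w\rangle\subseteq \dis(Q)$ is immediate from the definition of $\dis(Q)$.

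The key setup is that the involutory identity forces $L_x^2=\mathrm{id}$ for every $x\in Q$, so each $L_x$ is an involution and $L_x L_y^{-1}=L_x L_y$. In particular, writing $w=L_a L_b$, we get $w^{-1}=L_b L_a$. The first observation to record is the conjugation relation
\[
L_a w L_a = L_a L_a L_b L_a = L_b L_a = w^{-1},
\]
and symmetrically $L_b w L_b=w^{-1}$. Equivalently, $L_c w^k = w^{-k} L_c$ for each $c\in\{a,b\}$ and every $k\in\mathbb{Z}$. This is the \emph{one} algebraic fact that drives everything.

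Next I would invoke the description $Sg(a,b)=a^{\langle w\rangle}\cup b^{\langle w\rangle}$ from \cite[Corollary 5.4]{semimedial}, so that each element of $Q$ has the form $w^i(c)$ with $c\in\{a,b\}$ and $i\in\mathbb{Z}$. Using the quandle conjugation rule $h L_c h^{-1}=L_{h(c)}$, for $x=w^i(c)$ and $y=w^j(d)$ I compute
\[
L_x L_y = w^i L_c w^{-i}\,w^j L_d w^{-j} = w^i\,L_c w^{j-i}\,L_d\,w^{-j}.
\]
Pushing $L_c$ through $w^{j-i}$ via the relation from the previous paragraph gives
\[
L_x L_y = w^i\cdot w^{i-j}\cdot L_c L_d\cdot w^{-j}=w^{2i-j}L_c L_d\, w^{-j}.
\]
Since $L_c L_d\in\{1,L_a L_b,L_b L_a\}=\{1,w,w^{-1}\}\subseteq\langle w\rangle$ and $\langle w\rangle$ is abelian, in every case $L_x L_y$ is a power of $w$ (explicitly $w^{2(i-j)}$, $w^{2(i-j)+1}$ or $w^{2(i-j)-1}$).

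Since $\dis(Q)$ is generated by the elements $L_x L_y$ for $x,y\in Q$, this yields $\dis(Q)\subseteq\langle w\rangle$, completing the proof. I do not foresee a serious obstacle: the argument is really a single reduction controlled by $L_c w L_c = w^{-1}$, and the only outside ingredient is the structure of $Sg(a,b)$ which is quoted from earlier work; the rest is formal manipulation inside $\lmlt(Q)$.
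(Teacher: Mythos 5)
Your proof is correct, but it takes a genuinely different route from the paper's. The paper argues top--down: it invokes Joyce's description of the free $2$-generated involutory quandle as $\Aff(\mathbb{Z},-1)$ (\cite[Corollary 10.4]{J}), computes the displacement group of that free object to be $\langle L_1L_0\rangle=2\mathbb{Z}$, and transports the conclusion along the induced surjection of displacement groups, which sends $L_1L_0$ to $L_aL_b$. You instead work bottom--up inside $\lmlt(Q)$: the single relation $L_cwL_c=w^{-1}$ for $c\in\{a,b\}$ (a consequence of $L_c^2=1$), together with $hL_ch^{-1}=L_{h(c)}$ and the orbit description $Sg(a,b)=a^{\langle w\rangle}\cup b^{\langle w\rangle}$, lets you rewrite every generator $L_xL_y^{-1}=L_xL_y$ of $\dis(Q)$ explicitly as $w^{2(i-j)+\epsilon}$ with $\epsilon\in\{-1,0,1\}$. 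Both arguments are sound; the paper's is shorter on the page but outsources the real content to the classification of the free object, while yours is more elementary and as a bonus records the exact exponent of $w$ realizing a given $L_xL_y$, which essentially subsumes Lemma \ref{gen of dis}. One small caution: your only external input is the orbit description quoted from \cite[Corollary 5.4]{semimedial}. The paper states the same fact just before the proposition, so you are formally on safe ground; but since Corollary \ref{finiteness} later re-derives that description \emph{from} the present proposition, to rule out any circularity you should note that it follows directly from the relation $L_cw^kL_c=w^{-k}$ you already use: the set $a^{\langle w\rangle}\cup b^{\langle w\rangle}$ contains $a,b$ and is closed under $L_x$ for every $x$ in it, hence equals $Sg(a,b)$. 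With that one-line check your argument is entirely self-contained.
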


\begin{proof}
 According to \cite[Corollary 10.4]{J}, the free $2$-generated involutory quandle $F$ is isomorphic to $\Aff(\mathbb{Z},-1)$ that is generated by $0$ and $1$. Since we have that 
$$L_a L_b(c)=2(a-b)+c$$ 
for every $a,b,c\in F$, the displacement group of $F$ is $\langle L_1 L_0\rangle=2\mathbb{Z}\cong \mathbb Z$. The canonical surjective quandle homomorphism $F\longrightarrow Q$, induces a surjective group homomorphism $\mathbb{Z}\longrightarrow \dis(Q)$ and so $\dis(Q)$ is cyclic and it is generated by $L_a L_b$. 
\end{proof}

\begin{lemma}\label{gen of dis}
Let $Q$ be an involutory quandle, $a,b\in Q$ and $n\in \mathbb{N}$. Then:
\begin{itemize}
\item[(i)] $(L_a L_b)^{2n+1}=L_a L_{(L_b L_a)^n(b)}$.
\item[(ii)] $(L_a L_b)^{2n}=L_a L_{(L_b L_a)^n(a)}$.
\end{itemize}
\end{lemma}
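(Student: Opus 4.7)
The plan is to derive both identities by a short computation inside the group $\lmlt(Q)$, leaning on two facts available from the setup: since $Q$ is a quandle, the conjugation formula $hL_xh^{-1}=L_{h(x)}$ holds for every $h\in\lmlt(Q)$ and $x\in Q$ (this is spelled out right after the definition of a quandle in Section \ref{Sec:Racks}); and since $Q$ is involutory, $L_c^2=1$ and hence $L_c^{-1}=L_c$ for every $c\in Q$, which in particular gives $(L_bL_a)^{-1}=L_aL_b$.

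First I would apply the conjugation formula with $h=(L_bL_a)^n$ to rewrite the right-hand side of (i) as
$$L_a L_{(L_bL_a)^n(b)} \;=\; L_a\cdot (L_bL_a)^n\, L_b\, (L_bL_a)^{-n} \;=\; L_a (L_bL_a)^n\, L_b\, (L_aL_b)^n,$$
and analogously for (ii), with $b$ replaced by $a$ in the exponent argument.

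The next step is the auxiliary word identity $L_a (L_bL_a)^n = (L_aL_b)^n L_a$, which follows by a one-line induction on $n$: pushing a single $L_a$ to the right through the alternating word merely regroups the letters. Substituting this into the expression above, part (i) collapses to
$$(L_aL_b)^n\, L_a L_b\, (L_aL_b)^n \;=\; (L_aL_b)^{2n+1},$$
while part (ii) collapses to
$$(L_aL_b)^n\, L_a L_a\, (L_aL_b)^n \;=\; (L_aL_b)^{2n},$$
using $L_a^2=1$ in the latter step.

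I do not expect any real obstacle: both parts reduce to a formal manipulation in the group generated by $L_a$ and $L_b$, with the quandle conjugation formula doing the work of moving from a subscript $(L_bL_a)^n(b)$ or $(L_bL_a)^n(a)$ to a conjugate in $\lmlt(Q)$, and the involutory identity $L_a^2=1$ doing the final collapsing. The only thing worth isolating as a named step is the braid-like relation $L_a(L_bL_a)^n=(L_aL_b)^nL_a$, since it is what turns the conjugated expression into an honest power of $L_aL_b$.
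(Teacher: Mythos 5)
Your proof is correct and uses exactly the same ingredients as the paper's: the conjugation formula $hL_xh^{-1}=L_{h(x)}$, the involutory identity $L_x^2=1$ (hence $(L_bL_a)^{-n}=(L_aL_b)^n$), and the regrouping $L_a(L_bL_a)^n=(L_aL_b)^nL_a$. The only difference is that you run the computation from the right-hand side to the left, whereas the paper starts from $(L_aL_b)^{2n+1}$ (resp.\ inserts $L_a^2=1$ into $(L_aL_b)^{2n}$) and collapses to the conjugated form; this is the same argument read in the opposite direction.
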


\begin{proof}
Let $a,b\in Q$. For (i) we have
$$(L_a L_b)^{2n+1}=L_a (L_b L_a)^n L_b (L_a L_b)^n=L_a (L_b L_a)^n L_b (L_b L_a)^{-n}=L_a L_{(L_b L_a)^n(b)}$$
For (ii): 
\begin{align*}
(L_a L_b)^{2n}&=(L_a L_b)^{n}L_a^2(L_a L_b)^n =L_a (L_b L_a)^n L_a (L_a L_b)^n\\
&=L_a (L_b L_a)^n L_a (L_b L_a)^{-n}=L_a L_{(L_b L_a)^n(a)}\qedhere
\end{align*}
%
%
\end{proof}

\begin{corollary}\label{finiteness}
Let $Q$ be an involutory quandle and $a,b\in Q$. The following are equivalent:

\begin{itemize}
\item[(i)] $|L_a L_b|$ is finite.
\item[(ii)] $Sg(a,b)$ is finite 
\item[(iii)] $ord_{a,b}$ is finite.
\end{itemize}

\end{corollary}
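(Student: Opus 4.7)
The plan is to prove the cyclic chain of implications (i) $\Rightarrow$ (ii) $\Rightarrow$ (iii) $\Rightarrow$ (i). The two main tools are the formula $Sg(a,b) = a^{\langle L_a L_b\rangle}\cup b^{\langle L_a L_b\rangle}$ (a consequence of \cite[Corollary 5.4]{semimedial}, already recalled just before Proposition \ref{free involutory}) and the formula of Lemma \ref{gen of dis}(i) expressing odd powers of $L_a L_b$ in the form $L_a L_{(L_b L_a)^n(b)}$.

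The implication (i) $\Rightarrow$ (ii) is immediate: if $L_a L_b$ has finite order, both orbits $a^{\langle L_a L_b\rangle}$ and $b^{\langle L_a L_b\rangle}$ have size at most $|L_a L_b|$, so their union $Sg(a,b)$ is finite. For (ii) $\Rightarrow$ (iii), I would argue that if $Sg(a,b)$ is finite then the orbit of $b$ under $\langle L_a L_b\rangle$ is finite, so there exists $n>0$ with $(L_a L_b)^n(b)=b$; by the very definition of $a^k$ this reads $a^{2n}=b$, hence $ord_{a,b}\le 2n$ is finite.

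For (iii) $\Rightarrow$ (i), which is the step where Lemma \ref{gen of dis} is needed, I would set $m=ord_{a,b}$ and split on the parity of $m$. If $m=2n$ then by definition $(L_a L_b)^n(b)=b$, and since $L_b L_a=(L_a L_b)^{-1}$ the inverse also fixes $b$, i.e.\ $(L_b L_a)^n(b)=b$; plugging this into Lemma \ref{gen of dis}(i) gives $(L_a L_b)^{2n+1}=L_a L_{(L_b L_a)^n(b)}=L_a L_b$, so $(L_a L_b)^{2n}=1$. If instead $m=2n+1$ then $(L_a L_b)^n(a)=b$, and applying $(L_b L_a)^n$ on the left gives $(L_b L_a)^n(b)=a$; Lemma \ref{gen of dis}(i) then yields $(L_a L_b)^{2n+1}=L_a L_a=1$. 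In either case $(L_a L_b)^m=1$ in $\lmlt(Q)$, so $|L_a L_b|$ is finite.

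The main obstacle is the case analysis in (iii) $\Rightarrow$ (i): one needs to recognize that Lemma \ref{gen of dis}(i) is exactly the bridge that turns a fixed-point statement (coming from the hypothesis $a^m=b$) into a relation in $\lmlt(Q)$, and the two parities collapse the right-hand side to $L_a L_b$ or $L_a^2=1$ respectively. A small bookkeeping point, worth stating explicitly, is that the resulting identities are valid in $\lmlt(Q)$ (not only on $Sg(a,b)$), so the conclusion $|L_a L_b|<\infty$ is the one required.
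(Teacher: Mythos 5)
Your proof is correct and follows essentially the same route as the paper: the decomposition $Sg(a,b)=a^{\langle L_aL_b\rangle}\cup b^{\langle L_aL_b\rangle}$ for (i)$\Rightarrow$(ii)$\Rightarrow$(iii), and Lemma \ref{gen of dis} with a parity split on $ord_{a,b}$ for (iii)$\Rightarrow$(i). If anything, your treatment of the even case (deriving $(L_aL_b)^{2n+1}=L_aL_b$ from part (i) of that lemma and cancelling) is cleaner than the computation printed in the paper, which contains some typographical slips.
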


\begin{proof}
Let $S=Sg(a,b)$. According to Lemma \ref{free involutory}, $\dis(S)$ is generated by $L_a L_b$ and so, by \cite[Corollary 5.4]{semimedial}, $S=a^{L_a L_b}\cup b^{L_a L_b}$. 

(i) $\Rightarrow$ (ii) Clearly if $|L_a L_b|$ is finite then $S$ is finite. 

(ii) $\Rightarrow$ (iii) If $S$ is finite then $ord_{a,b}\leq 2|a^{L_a L_b}|\leq |S|$ is finite.

(ii) $\Leftrightarrow$ (iii) It follows by Lemma \ref{gen of dis}. Indeed, if $s=ord_{a,b}$ is even then $(L_a L_b)^{2s}=L_a L_{(L_b L_a)^s(a)}=L_a^2=1$. If $s$ is odd then $(L_a L_b)^{2s+1}=L_a L_{(L_b L_a)^2(b)}=L_a^2=1$.
\end{proof}

\begin{proposition}\label{2 gen involutory}
Let $Q$ be an involutory quandle and $a,b\in Q$. The following are equivalent:
\begin{itemize}
\item[(i)] $Sg(a,b)$ is connected.
\item[(ii)] $Sg(a,b)$ is a finite latin quandle of odd order.
\item[(iii)] $ord_{a,b}$ is finite and odd.
\end{itemize}
\end{proposition}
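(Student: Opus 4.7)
My plan is to route everything through the cyclic displacement group of $Sg(a,b)$. By Proposition \ref{free involutory}, $\dis(Sg(a,b)) = \langle \alpha \rangle$ with $\alpha = L_a L_b$, and since $Sg(a,b)$ is idempotent the orbits of $\lmlt$ and of $\dis$ coincide. The involutory law forces $L_a \alpha L_a^{-1} = L_a L_a L_b L_a = L_b L_a = \alpha^{-1}$, so conjugation by $L_a$ inverts $\alpha$. The implication (ii) $\Rightarrow$ (i) is immediate because latin left quasigroups are connected.

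For (i) $\Rightarrow$ (iii), connectedness forces $\langle \alpha \rangle$ to act transitively on $Sg(a,b)$, so $b = \alpha^k(a)$ for some $k \geq 0$. The quandle identity gives $L_b = L_{\alpha^k(a)} = \alpha^k L_a \alpha^{-k}$, and combined with $L_a \alpha^k L_a = \alpha^{-k}$ this yields
$$\alpha \;=\; L_a L_b \;=\; L_a \alpha^k L_a \alpha^{-k} \;=\; \alpha^{-k} L_a^2 \alpha^{-k} \;=\; \alpha^{-2k},$$
hence $\alpha^{2k+1} = 1$. So $|\alpha|$ is finite and odd, and the single orbit $Sg(a,b)$ has odd size $m$ dividing $|\alpha|$. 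Corollary \ref{finiteness} then yields $ord_{a,b} < \infty$; the remaining point is parity. The stabilizers of $a$ and of $b$ in $\langle \alpha \rangle$ both equal $\langle \alpha^m \rangle$, so the smallest $j > 0$ of the form $2i+1$ with $a^j = b$ is $2(k \bmod m)+1 \leq 2m-1$, while the smallest of the form $2i$ is $2m$. The odd candidate wins, so $ord_{a,b}$ is odd.

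For (iii) $\Rightarrow$ (ii), if $ord_{a,b} = 2k+1$ is finite and odd then $\alpha^k(a) = b$ and the same calculation produces $\alpha^{2k+1} = 1$. Since $Sg(a,b) = a^{\langle \alpha \rangle} \cup b^{\langle \alpha \rangle}$ (by the structure of $2$-generated quandles recalled just before Proposition \ref{free involutory}) and $b$ lies in the orbit of $a$, this orbit exhausts $Sg(a,b)$, which therefore has odd size $m$ dividing $|\alpha|$. Define $\phi : \mathbb{Z}/m \to Sg(a,b)$ by $\phi(i) = \alpha^i(a)$; using $L_a \alpha^i(a) = \alpha^{-i}(a)$ one computes
$$\phi(i) * \phi(j) \;=\; \alpha^i L_a \alpha^{j-i}(a) \;=\; \alpha^{2i-j}(a) \;=\; \phi(2i-j),$$
so $\phi$ identifies $Sg(a,b)$ with the dihedral quandle $\aff(\mathbb{Z}/m,-1)$. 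Since $m$ is odd, $2$ is invertible modulo $m$ and $R_c(x) = 2x - c$ is a bijection, so $Sg(a,b)$ is latin.

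The only delicate point is the parity bookkeeping in (i) $\Rightarrow$ (iii): one must show that the minimum of $\{j > 0 : a^j = b\}$ is itself odd, not merely that the set meets the odd integers. Once the identity $\alpha^{2k+1} = 1$ is in hand, everything else is a short calculation, and that identity is the essential payoff of combining $b = \alpha^k(a)$ with the involutory conjugation relation $L_a \alpha L_a^{-1} = \alpha^{-1}$.
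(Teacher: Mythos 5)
Your proof is correct. Every step checks out: the conjugation relation $L_a(L_aL_b)L_a^{-1}=(L_aL_b)^{-1}$ is a valid consequence of $L_a^2=1$; the derivation $\alpha=L_aL_b=\alpha^{-2k}$ from $b=\alpha^k(a)$ is right (and works even if one only knows $k\in\mathbb{Z}$ a priori, since $|2k+1|$ is odd and nonzero in any case); the orbit--stabilizer parity bookkeeping correctly identifies the minimal odd candidate $2(k\bmod m)+1\le 2m-1$ as beating the minimal even candidate $2m$; and the explicit isomorphism $\phi(i)=\alpha^i(a)$ onto $\aff(\mathbb{Z}/m,-1)$ with $\phi(i)*\phi(j)=\phi(2i-j)$ is computed correctly, with latinity following from invertibility of $2$ modulo the odd integer $m$.

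Your route differs from the paper's in a way worth noting. Both arguments rest on Proposition \ref{free involutory} (cyclic displacement group) and ultimately on the dihedral quandle $\aff(C,-1)$, but the paper proves (i)$\Rightarrow$(ii)$\Rightarrow$(iii)$\Rightarrow$(i) by invoking the representation of a connected $2$-generated involutory quandle as $\Aff(C,-1)$ over a cyclic group, discarding the infinite case because $\Aff(\mathbb{Z},-1)$ is disconnected, and reading off oddness from bijectivity of $R_0:x\mapsto 2x$. You instead run the cycle (ii)$\Rightarrow$(i)$\Rightarrow$(iii)$\Rightarrow$(ii) and extract finiteness and oddness of $|L_aL_b|$ directly from the single relation $\alpha=\alpha^{-2k}$, only afterwards building the dihedral model by hand. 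What your version buys is self-containment: you never need to cite the affine representation of connected quandles, and you make explicit the parity analysis of $ord_{a,b}$ (distinguishing the minimal odd index $2(k\bmod m)+1$ from the minimal even index $2m$), which the paper's (ii)$\Rightarrow$(iii) step passes over rather tersely. The paper's version is shorter because it leans on the existing structure theory of affine quandles. Both are valid; yours is the more careful of the two on the point you yourself flag as delicate.
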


\begin{proof}

According to Lemma \ref{free involutory}, $S=Sg(a,b)$ has cyclic displacement group generated by $L_a L_b$ and every orbit of $S$ is isomorphic to $\aff(C,-1)$ where $C$ is a cyclic group.

(i) $\Rightarrow$ (ii) If $Q$ is connected, then $Q\cong \Aff(C,-1)$. If $C$ is infinite then $Q$ is not connected. Hence $Q$ is a finite connected affine quandle, and then $Q$ is latin (see \cite{Principal, hsv}). In particular, $R_0:x\mapsto 2x$ is bijective and so $|C|$ is odd.
%
%
%
%

(ii) $\Rightarrow$ (iii) If $Q$ is a finite latin quandle, then $Q\cong \aff(\mathbb{Z}_{2n+1},-1)$ for some $n$. The condition $(L_1 L_0)^n(0)=2n=0$ is satisfied just for $n=0$.

(iii) $\Rightarrow$ (i) If $ord_{a,b}$ is odd, then there exists $n=2k+1$ such that $b=(L_a L_b)^k(a)$ and so $Q$ is connected.
\end{proof}
%
%
%

%

\subsection{Superconnected and latin involutory quandles}

Let us first note that in one direction \cite[Proposition 6]{Nobu} works also for infinite superfaithful quandle (indeed the proof indeed just requires that the order of the canonical generators of the displacement group have finite order and that the two-generated subquandles are faithful). We provide an alternative proof using Lemma \ref{2 gen involutory}.

\begin{lemma}
\label{from nobu}
Let $Q$ be an involutory quandle and $a,b\in Q$ such that $|L_a L_b|$ is finite. If $Q$ is superfaithful then $|L_a L_b|=ord_{a,b}$ is odd.
\end{lemma}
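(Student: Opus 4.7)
I will reduce to $S=Sg(a,b)$. By Corollary~\ref{finiteness}, $S$ is finite and $ord_{a,b}$ is finite; and since $S$ is a subalgebra of the superfaithful quandle $Q$, $S$ itself is superfaithful, so by Lemma~\ref{superfQ} we have $Fix(L_x)=\{x\}$ for every $x\in S$. The plan is to prove $s:=ord_{a,b}$ is odd. Once this is done, Lemma~\ref{2 gen involutory} identifies $S$ with $\aff(\mathbb{Z}_s,-1)$ (as in the proof of that lemma, where $S$ is shown to be a finite latin quandle of odd order, and the $2$-generated involutory hypothesis pins it down up to isomorphism). In that representation $L_a L_b$ is translation by a nonzero element of $\mathbb{Z}_s$ with $\gcd(2,s)=1$, which has order $s$; hence $|L_aL_b|=s=ord_{a,b}$.

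So suppose for contradiction that $s$ is even. By Proposition~\ref{free involutory}, $\dis(S)=\langle L_aL_b\rangle$ is cyclic, and since $s$ is even $a$ and $b$ sit in distinct $\dis(S)$-orbits $O_a=a^{\dis(S)}$ and $O_b=b^{\dis(S)}$ of sizes $m_a$ and $m_b=s/2$; each orbit is invariant under $\lmlt(S)=\langle L_a,L_b\rangle$. The conjugation identity $L_a(L_aL_b)^jL_a=(L_aL_b)^{-j}$ (and the symmetric one for $L_b$) together with the definitions $a^{2k}=(L_aL_b)^k(b)$, $a^{2k+1}=(L_aL_b)^k(a)$ yield the closed forms
\[
L_a(a^{2j+1})=a^{1-2j},\qquad L_b(a^{2j})=a^{-2j},
\]
exhibiting $L_a$ and $L_b$ as dihedral reflections on $O_a$ and $O_b$ respectively.

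Now count. Because $L_a$ is an involution on $S$ with exactly one fixed point, $|S|=m_a+m_b$ must be odd, hence \emph{exactly one} of $m_a,m_b$ is even. If $m_a$ is even, then from $L_a(a^{2j+1})=a^{1-2j}$ the index $j=m_a/2$ gives the fixed point $a^{m_a+1}=(L_aL_b)^{m_a/2}(a)\in O_a$, distinct from $a=a^1$ because $m_a\nmid m_a/2$ for $m_a>0$; this contradicts $Fix(L_a)=\{a\}$. If instead $m_b$ is even, then by the analogous calculation $a^{m_b}$ is a fixed point of $L_b$ in $O_b$ distinct from $b$, contradicting $Fix(L_b)=\{b\}$. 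Either case gives a contradiction, so $s$ is odd, and the reduction in the first paragraph finishes the proof.

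The main obstacle is the explicit identification of the fixed-point sets of $L_a$ and $L_b$ on the two $\dis(S)$-orbits; once one recognizes $\lmlt(S)$ as a (finite) dihedral group with $L_a,L_b$ playing the role of reflections and translates this into the reflection-on-a-cycle fact (one fixed point if the cycle has odd length, zero or two if even), the parity bookkeeping $|Fix(L_x)|\equiv|S|\pmod 2$ closes both sub-cases symmetrically.
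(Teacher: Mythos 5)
Your proof is correct and follows essentially the same route as the paper's: assuming $S=Sg(a,b)$ disconnected, both arguments exploit the parity of the two $\langle L_aL_b\rangle$-orbits together with the fact that the involutions $L_a,L_b$ have a unique fixed point on the superfaithful subquandle $S$ (Lemma \ref{superfQ}) to reach a contradiction, and then invoke Proposition \ref{2 gen involutory}. The only substantive difference is that the paper quotes \cite[Proposition 5]{Nobu} for the equality $|L_aL_b|=ord_{a,b}$, whereas you obtain it self-containedly by the explicit computation in the affine model $\aff(\mathbb{Z}_s,-1)$.
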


\begin{proof}
According to \cite[Proposition 5]{Nobu} $|L_a L_b|=ord_{a,b}$ since $Q$ is faithful.

The group $C=\langle L_a L_b\rangle$ is finite and so it is $S=Sg(a,b)$ Assume that $S$ is not connected, i.e. $S=a^C\cup b^C=O_a\cup O_b$. If $|O_a|$ is even then $O_a\cong \aff(\mathbb{Z}_{2m},-1)$ has projection subquandle. Then $|O_a|$ is odd and $L_b$ acts on $O_a$. Since $L_b$ has order $2$ then $L_b|_{O_a}$ has fixed points. According to Lemma \ref{superfQ}, $\mathcal{P}_2\in \mathcal{S}(Q)$, contradiction. Hence $S$ is connected and so Proposition \ref{2 gen involutory} applies. 
\end{proof}

%

The following theorem characterizes superconnected involutory quandles.

\begin{theorem}\label{involutory superfaith}
Let $Q$ be an involutory quandle. The following are equivalent:
\begin{itemize}
\item[(i)] $Q$ is superconnected.
\item[(ii)] $Q$ is latin and $|L_a L_b|$ is finite for every $a,b\in Q$.
\item[(iii)] $ord_{a,b}=|L_a L_b|$ is finite and odd for every $a,b\in Q$.
\end{itemize}
\end{theorem}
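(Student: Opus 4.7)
The plan is to prove the cyclic chain $(i)\Rightarrow(iii)\Rightarrow(ii)\Rightarrow(i)$. All three conditions are naturally characterizations via $2$-generated subquandles, so the workhorses will be Lemma \ref{2gen superconnected}, Lemma \ref{2gen latin} and Proposition \ref{2 gen involutory}, with Corollary \ref{finiteness}, Lemma \ref{all faithful} and Lemma \ref{from nobu} providing the bookkeeping that links $|L_a L_b|$, $ord_{a,b}$ and the cardinality of $Sg(a,b)$.

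For $(i)\Rightarrow(iii)$, I would fix $a,b\in Q$ and invoke Lemma \ref{2gen superconnected} to learn that $Sg(a,b)$ is connected. Proposition \ref{2 gen involutory} then upgrades this to the statement that $Sg(a,b)$ is a finite latin quandle with $ord_{a,b}$ finite and odd, and Corollary \ref{finiteness} converts finiteness of $Sg(a,b)$ into finiteness of $|L_a L_b|$. To obtain the equality $|L_a L_b|=ord_{a,b}$ it suffices to note that $Q$ is superfaithful by Lemma \ref{all faithful}, so that Lemma \ref{from nobu} applies under the hypothesis that $|L_a L_b|$ is finite (already established).

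For $(iii)\Rightarrow(ii)$, the hypothesis feeds directly into Proposition \ref{2 gen involutory} at each pair $a,b$, making $Sg(a,b)$ finite and latin; Lemma \ref{2gen latin} then promotes latinity from the $2$-generated subalgebras to all of $Q$, while finiteness of each $|L_a L_b|$ is built into the hypothesis. This step is essentially a direct invocation of the two previous lemmas.

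For $(ii)\Rightarrow(i)$, I would fix $a,b$, apply Corollary \ref{finiteness} (using that $|L_a L_b|$ is finite) to obtain that $Sg(a,b)$ is finite, and then observe that $Sg(a,b)$ inherits latinity from $Q$: every right multiplication $R_x$ with $x\in Sg(a,b)$ is the restriction of a bijection of $Q$ to a self-map of the finite set $Sg(a,b)$, hence is bijective there. Proposition \ref{2 gen involutory} then yields that $Sg(a,b)$ is connected, and Lemma \ref{2gen superconnected} upgrades this to superconnectedness of $Q$. The only mildly nontrivial bookkeeping is the inheritance of latinity to finite subalgebras just described; beyond this, I do not anticipate any real obstacle, since the heavy lifting has already been done in the cited preceding results.
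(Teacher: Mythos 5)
Your proof is correct and uses essentially the same toolkit as the paper (Proposition \ref{2 gen involutory}, Lemmas \ref{2gen superconnected}, \ref{2gen latin}, \ref{from nobu} and Corollary \ref{finiteness}), merely running the cycle in the order $(i)\Rightarrow(iii)\Rightarrow(ii)\Rightarrow(i)$ instead of the paper's $(i)\Rightarrow(ii)\Rightarrow(iii)\Rightarrow(i)$. The only point worth flagging is that in $(ii)\Rightarrow(i)$ you invoke Proposition \ref{2 gen involutory}(ii), whose statement asks for odd order in addition to finiteness and latinity; this is harmless, since a latin left quasigroup is already connected (so $Sg(a,b)$ is connected outright), or equivalently because oddness of $|Sg(a,b)|$ follows from latinity of the finite affine representation.
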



\begin{proof}

(i) $\Rightarrow$ (ii) According to Proposition \ref{2 gen involutory}, if the subquandle $Sg(a,b)$ is connected then it is finite and latin. Then we can conclude that $Q$ is latin by  Lemma \ref{2gen latin} and that $|L_a L_b|$ is finite by Corollary \ref{finiteness}.


(ii) $\Rightarrow$ (iii) Follows by Lemma \ref{from nobu} since $Q$ is superfaithful.

(iii) $\Rightarrow$ (i) By virtue of Proposition \ref{2 gen involutory}, every pair of elements of $Q$ generates a finite connected quandle. Thus we can conclude by Lemma \ref{2gen superconnected}.
%
\end{proof}
%
%
%
%
%
\begin{example}
Let $A$ be a torsion abelian group. If $A$ is $2$-divisible (i.e. $2A=A$) and $A$ has no $2$-torsion (i.e. $A$ has no elements of order $2$) then $Q=\aff(A,-1)$ is a superconnected involutory quandle. Indeed $Q$ is latin and the order of the genetators of $\dis(Q)\cong A$ is finite. For instance take $A$ to be the Pr\"ufer group $\mathbb{Z}_{p^\infty}$ for $p>2$.
\end{example}

 In the (locally) finite case we also recover the main result of \cite{involutive_quandles_russo} by using Proposition \ref{from nobu} and Theorem \ref{involutory superfaith}. Theorem also extends the main result of \cite{Nobu} to infinite involutory quandles such that the order of the canonical generators of the displacement group is finite.

\begin{corollary}\label{nobu e russi}
Let $Q$ be an involutory quandle such that $|L_a L_b|$ is finite for every $a,b\in Q$. The following are equivalent:
\begin{itemize}
\item[(i)] $Q$ is superfaithful.
\item[(ii)] $Q$ is latin.
\item[(iii)] $ord_{a,b}=|L_a L_b|$ is odd for every $a,b\in Q$.
\end{itemize}
\end{corollary}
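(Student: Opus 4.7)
The plan is to prove the three-way equivalence by the cyclic chain $(\text{i})\Rightarrow(\text{iii})\Rightarrow(\text{ii})\Rightarrow(\text{i})$, treating the standing hypothesis that every $|L_aL_b|$ is finite as a uniform replacement for the local-finiteness assumptions appearing in the feeder results cited above.

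For $(\text{i})\Rightarrow(\text{iii})$, I would just apply Lemma \ref{from nobu} pointwise: fix arbitrary $a,b\in Q$, observe that the hypothesis gives $|L_aL_b|$ finite, and that superfaithfulness of $Q$ gives the superfaithfulness ingredient required by that lemma (since $\mathcal{P}_2\notin \textbf{S}(Q)$). The conclusion $\mathrm{ord}_{a,b}=|L_aL_b|$ odd then follows immediately.

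For $(\text{iii})\Rightarrow(\text{ii})$, this is essentially a rewording of the equivalence $(\text{iii})\Leftrightarrow(\text{ii})$ inside Theorem \ref{involutory superfaith}: condition (iii) of the present corollary is exactly condition (iii) of Theorem \ref{involutory superfaith}, so $Q$ is superconnected and in particular latin. Equivalently, one can argue directly that Proposition \ref{2 gen involutory} makes each $Sg(a,b)$ a finite latin quandle of odd order, and then invoke Lemma \ref{2gen latin} to promote this local latinity to latinity of $Q$. I would prefer the direct invocation of Theorem \ref{involutory superfaith} because it makes the recycling of earlier work transparent.

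For $(\text{ii})\Rightarrow(\text{i})$, I would cite the general fact recorded early in Section \ref{Sec:left} that latin left quasigroups are superfaithful; no involutory or finiteness hypothesis is needed here. Since all three steps are pure applications of previously established results, I do not anticipate any serious obstacle; the only thing one has to be careful about is noting at the outset that the global hypothesis ``$|L_aL_b|<\infty$ for all $a,b$'' is precisely what lets Lemma \ref{from nobu} and clause (iii) of Theorem \ref{involutory superfaith} apply uniformly across $Q$, so that one may conclude superconnectedness (and not merely superfaithfulness together with local connectivity).
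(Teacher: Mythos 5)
Your proof is correct and follows essentially the same route the paper intends: the text preceding the corollary states that it follows from Lemma \ref{from nobu} (giving (i) $\Rightarrow$ (iii)) together with Theorem \ref{involutory superfaith} (whose condition (iii) coincides with the corollary's (iii) under the standing finiteness hypothesis, yielding superconnectedness and hence latinity), closed up by the general fact from Section \ref{Sec:left} that latin left quasigroups are superfaithful.
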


Note that the classical result \cite[Theorem 1.2]{Gore} is exactly Corollary \ref{nobu e russi} for quandles given by conjugacy classes of involutions in finite groups.


The quandle $Q=\aff(\mathbb{Q},-1)$ is an infinite involutory latin quandle such that $|L_a L_b|$ is infinite for every $a,b\in Q$, so Corollary \ref{nobu e russi}(i) can not be pushed any further. Finite simple superfaithful involutory quandles are isomorphic to $Q\cong  \aff(\mathbb{Z}_p,-1)$ where $p$ is a prime (the unique simple latin involutory quandles). Simple involutory non-latin quandles exist, e.g. the smallest example is {\tt SmallQuandle}(10,1) from the \cite{RIG} database of GAP.

\begin{corollary}\label{solvability}
Let $Q$ be an involutory quandle such that $|L_a L_b|$ is odd for every $a,b\in Q$. Then 
\begin{itemize}
\item[(i)] $Q/\lambda_Q$ is latin.
\item[(ii)] If $Q/\lambda_Q$ is finite then $Q$ is solvable. 
\end{itemize}
\end{corollary}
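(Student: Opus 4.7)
For part (i), the plan is to apply Corollary \ref{nobu e russi} to $\bar Q:=Q/\lambda_Q$. Two things need attention: verifying that $\bar Q$ is faithful, and verifying condition (iii) of that corollary (which gives latinity). For faithfulness, observe that $[a]\,\lambda_{\bar Q}\,[b]$ translates, via left distributivity, into the centrality of $L_a L_b^{-1}$ in $\lmlt(Q)$: indeed $L_{a*c}=L_{b*c}$ for all $c$ says $L_aL_cL_a^{-1}=L_bL_cL_b^{-1}$, i.e. $L_a L_b^{-1}$ commutes with every generator. In an involutory quandle, conjugating $L_aL_b$ by $L_a$ turns it into $L_bL_a$, so centrality forces $L_a$ and $L_b$ to commute; then $(L_aL_b)^2=1$ and the odd-order hypothesis collapses this to $L_a=L_b$. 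For (iii), $|L_{[a]}L_{[b]}|$ divides $|L_a L_b|$ and is therefore odd and finite, while the equality $ord_{[a],[b]}=|L_{[a]}L_{[b]}|$ on a faithful involutory quandle (\cite[Proposition 5]{Nobu}, invoked inside Lemma \ref{from nobu}) delivers condition (iii). Hence Corollary \ref{nobu e russi} gives that $\bar Q$ is latin.

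For part (ii), since $\bar Q$ is a finite latin involutory quandle, $\dis(\bar Q)$ is solvable (by the theorem of Stein used in the proof of Proposition \ref{principal dec}). The key step is to show that the kernel $\lmlt^{\lambda_Q}$ of $\pi_{\lambda_Q}:\lmlt(Q)\to\lmlt(\bar Q)$ lies in the center of $\lmlt(Q)$. Indeed, any $g\in\lmlt^{\lambda_Q}$ fixes each $\lambda_Q$-block setwise, so $g(a)\,\lambda_Q\,a$ and hence $L_{g(a)}=L_a$ for every $a$; combined with the quandle conjugation identity $gL_ag^{-1}=L_{g(a)}$, this shows that $g$ commutes with every generator $L_a$, so $g\in Z(\lmlt(Q))$. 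Consequently $\lmlt^{\lambda_Q}$ is abelian and $\lmlt(Q)$ is a central extension of the finite solvable group $\lmlt(\bar Q)$ (a degree-at-most-$2$ extension of the solvable $\dis(\bar Q)$) by the abelian group $\lmlt^{\lambda_Q}$; hence $\lmlt(Q)$ is solvable, so is its subgroup $\dis(Q)$, and by \cite[Theorem 1.2]{CP}, $Q$ itself is solvable.

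The two main obstacles are the two structural identifications above: in part (i), recognising that the Cayley kernel of $\bar Q$ captures precisely those $L_a L_b^{-1}$ that centralise $\lmlt(Q)$, so that the involutory structure together with the odd-order hypothesis forces triviality; and in part (ii), recognising that $\lmlt^{\lambda_Q}$ always lands in the centre of $\lmlt(Q)$---a general fact for any quandle---which converts the $\lambda_Q$-extension of $\bar Q$ into a central group extension and thereby preserves solvability. Everything else (quotient of involutory being involutory, orders dividing across surjections, $\dis$ of index at most $2$ in $\lmlt$) is routine.
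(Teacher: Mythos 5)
Your proof is correct, but part (i) takes a genuinely different route from the paper's. The paper argues directly: writing $|L_aL_b|=2n+1$ and using Lemma \ref{gen of dis}(i), it gets $L_aL_{(L_bL_a)^n(b)}=(L_aL_b)^{2n+1}=1$, hence $[a]_{\lambda_Q}=(L_{[b]}L_{[a]})^n([b])$ in $Q/\lambda_Q$; this exhibits every $2$-generated subquandle of the quotient as connected, so $Q/\lambda_Q$ is superconnected and therefore latin by Theorem \ref{involutory superfaith}. You instead first prove that $Q/\lambda_Q$ is \emph{faithful} (your centrality argument --- $L_bL_a\in Z(\lmlt(Q))$ forces $(L_bL_a)^2=1$, which the odd-order hypothesis collapses to $L_a=L_b$ --- is correct and is a nice observation in its own right), and then feed condition (iii) of Corollary \ref{nobu e russi} via the equality $ord_{[a],[b]}=|L_{[a]}L_{[b]}|$ from \cite[Proposition 5]{Nobu}. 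This works, and your usage of Nobu's proposition under mere faithfulness is consistent with how the paper itself invokes it in Lemma \ref{from nobu}; still, be aware that your argument leans on that external result, whereas the paper's one-line computation with Lemma \ref{gen of dis} sidesteps both the faithfulness of the quotient and Nobu's order formula entirely. For part (ii) your argument is essentially the paper's: both rest on Stein's solvability of $\dis(Q/\lambda_Q)$ plus the centrality of the kernel of $\pi_{\lambda_Q}$; the paper cites \cite[Corollary 2.3]{covering_paper} for the centrality of $\dis^{\lambda_Q}$ in $\dis(Q)$, while you prove the (slightly stronger) fact $\lmlt^{\lambda_Q}\leq Z(\lmlt(Q))$ directly from $gL_ag^{-1}=L_{g(a)}=L_a$, which makes your version self-contained. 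The remaining steps (index of $\dis$ in $\lmlt$, solvability passing through a central extension, and the equivalence between solvability of $Q$ and of $\dis(Q)$) are handled correctly.
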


\begin{proof}
(i) Assume that $|L_a L_b|=2n+1$. According to Lemma \ref{gen of dis} we have $L_a L_{(L_b L_a)^n(b)}=(L_a L_b)^{2n+1}=1$, i.e. 
$$[a]_{\lambda_Q}=[(L_b L_a)^n(b)]_{\lambda_Q}=(L_{[b]_{\lambda_Q}} L_{[a]_{\lambda_Q}})^n([b]_{\lambda_Q}).$$
Thus $Q/\lambda_Q$ is superconnected and then latin by Theorem \ref{involutory superfaith}.

(ii) If $Q/\lambda_Q$ is finite, the group $\dis(Q/\lambda_Q)$ is solvable according to the main result of \cite{Stein2} 
and $\dis^{\lambda_Q}$ is central in $\dis(Q)$ \cite[Corollary 2.3]{covering_paper}. Therefore $\dis(Q)$ is also solvable and we can conclude that $Q$ is solvable by using \cite[Lemma 6.1]{CP}.
\end{proof}

\begin{theorem}\label{nilpotent}
Let $Q$ be a finite nilpotent involutory quandle. Then $Q$ is latin if and only if $Q$ is connected and faithful.
\end{theorem}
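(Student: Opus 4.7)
The forward direction, that a latin quandle is connected and faithful, is standard: in any latin quandle the equation $x\ast a=b$ is uniquely solvable in $x$, so $L_x(a)=b$ and $\lmlt(Q)$ is transitive; moreover latin quandles are always superfaithful, hence in particular faithful. The work lies in the converse, and my plan mirrors the proof of Theorem \ref{nilpotence}: I would argue by induction on the nilpotency length $n$ of $Q$ that a finite, nilpotent, involutory, connected, faithful quandle is latin.

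The base case $n=1$ is handled directly. Then $Q$ is abelian and so of the form $\aff(A,f)$ for some finite abelian group $A$ and $f\in\aut{A}$. Involutoriness forces $f^2=1$; faithfulness ($\ker(1-f)=0$) and connectedness ($(1-f)A=A$) both translate, in the finite setting, to $1-f$ being bijective, which makes the right multiplication $R_a(y)=(1-f)y+fa$ invertible, so $Q$ is latin.

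For the inductive step $n\geq 2$, nilpotency makes $\zeta_Q\neq 0_Q$, so $P=Q/\zeta_Q$ is a finite, involutory, connected quandle of nilpotency length strictly less than $n$. Since $Q$ is connected and $\zeta_Q$ is an abelian central congruence, \cite[Proposition 7.8]{CP} realizes $Q$ as a central extension $Q\cong P\times_{\psi,\theta}A$ of the form \eqref{central ext op}. A direct computation from $x\ast(x\ast y)\approx y$ shows that $\psi^2=1$, while faithfulness of $Q$ gives $\ker(1-\psi)=0$; on the finite $A$ this forces $1-\psi$ to be bijective, and combined with $\psi^2=1$ this pins down $\psi=-1$ and $|A|$ odd. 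Granting that $P$ is faithful, the inductive hypothesis makes $P$ latin; the blocks of $\zeta_Q$ are then $\aff(A,-1)$ with $|A|$ odd, so latin and in particular superfaithful, and Lemma \ref{remark on super} combined with Proposition \ref{nilpotent latin} gives $Q$ latin (equivalently one finishes by writing out the inverse of $R_{(a,s)}$ exactly as in the closing lines of Theorem \ref{nilpotence}).

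The main obstacle is verifying that $P=Q/\zeta_Q$ is faithful. Lemma \ref{proj sub} propagates only superfaithfulness downwards, and in a generic central extension faithfulness of the total need not imply faithfulness of the base. To close this gap I would invoke the characterization $a\,\zeta_Q\,b\iff L_aL_b^{-1}\in Z(\dis(Q))\text{ and }\dis(Q)_a=\dis(Q)_b$ from \cite[Proposition 5.8]{CP}: if $L_{[a]}=L_{[b]}$ in $P$ with $[a]\neq [b]$, then $L_aL_b^{-1}\in\dis^{\zeta_Q}$, which lies in $Z(\dis(Q))$ by centrality of $\zeta_Q$ (via \cite{covering_paper}); combining this with the involutory-plus-faithful identity $|L_aL_b|=ord_{a,b}$ of \cite[Proposition 5]{Nobu} (already used in the proof of Lemma \ref{from nobu}) and connectedness of $Q$ should force the stabilizer condition $\dis(Q)_a=\dis(Q)_b$, contradicting $[a]\neq [b]$.
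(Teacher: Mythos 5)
Your forward direction and base case are fine, and the inductive frame (pass to $P=Q/\zeta_Q$, use the central extension structure, then recover $Q$ via Lemma \ref{remark on super} and Proposition \ref{nilpotent latin}) would work \emph{if} you could show $P$ is faithful. But that is exactly where the proof stops being a proof: the final paragraph only says the stabilizer condition ``should force'' $\dis(Q)_a=\dis(Q)_b$, and the tools you point at do not obviously deliver it. Two concrete problems. First, from $L_{[a]}=L_{[b]}$ you only get $L_aL_b^{-1}\in\dis^{\zeta_Q}$ (the kernel of $\dis(Q)\to\dis(P)$); centrality of $\zeta_Q$ gives $\dis_{\zeta_Q}\le Z(\dis(Q))$ via the description of $\zeta_Q$, but $\dis^{\zeta_Q}$ may a priori be strictly larger than $\dis_{\zeta_Q}$ (their coincidence is the kind of statement the paper proves only for superconnected quandles, Proposition \ref{from Principal}(ii), or under CDSg), so ``$L_aL_b^{-1}$ is central'' needs an argument. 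Second, even granting centrality, invoking $|L_aL_b|=ord_{a,b}$ from \cite{Nobu} is the wrong instrument for $\dis(Q)_a=\dis(Q)_b$. (That particular step is in fact easy and you should just do it: for $h\in\dis(Q)_a$, left distributivity gives $hL_aL_b^{-1}h^{-1}=L_{h(a)}L_{h(b)}^{-1}=L_aL_b^{-1}$, hence $L_{h(b)}=L_b$ and $h(b)=b$ by faithfulness of $Q$.) As submitted, the argument has a genuine hole at its load-bearing step.

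For contrast, the paper's proof avoids the quotient-by-the-center induction entirely. By Corollary \ref{solvability}(i), since $Q$ is faithful it suffices to show $|L_aL_b|$ is odd for all $a,b$; this follows because a finite connected nilpotent quandle decomposes as a direct product of connected quandles of prime power order built over the Sylow subgroups of the nilpotent group $\dis(Q)$ (\cite[Theorem 1.4]{CP}), and there is no connected involutory quandle of order a power of $2$ (\cite[Theorem 8.1]{CP}), so the $2$-Sylow of $\dis(Q)$ is trivial. That route uses the involutory hypothesis exactly once, in a form already packaged by Corollary \ref{solvability}, and sidesteps the faithfulness-of-the-quotient difficulty altogether. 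If you prefer your inductive scheme, repair it by proving $P$ faithful along the lines sketched above (justifying $\dis^{\zeta_Q}\le Z(\dis(Q))$, or arguing directly with $\dis_{\zeta_Q}$); otherwise adopt the odd-order reduction.
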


\begin{proof}
The forward implication is clear. Let $Q$ be faithful and connected. 
According to Corollary \ref{solvability}(i) we just need to prove that $|L_a L_b|$ is odd for every $a,b\in Q$. According to \cite[Theorem 1.4]{CP}, $Q$ decomposes as the direct product of connected quandles of prime power order. Such quandles are constructed over the $p$-Sylow subgroups of the nilpotent group $\dis(Q)$. According to \cite[Theorem 8.1]{CP} there are no connected involutory quandle of size a power of $2$. Therefore the $2$-Sylow of $\dis(Q)$ is trivial and so the order of $L_a L_b$ is odd. 
\end{proof}

Let us include the following group theoretical application in the same direction of the main result of \cite{OnCommutation}.
\begin{corollary}\label{group1}
Let $G$ be a finite group generated by a conjugacy class of involutions $\mathcal{C}$. 
\begin{itemize}
\item[(i)] If $\mathcal{C}$ contains no commuting elements then $G$ is solvable.
\item[(ii)] If $|a b Z(G)|$ is odd for every $a,b\in \mathcal{C}$ then $G$ is solvable.
\end{itemize}
\end{corollary}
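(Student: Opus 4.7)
The plan is to translate both parts into quandle-theoretic statements about the involutory quandle $Q=\mathrm{Conj}(\mathcal{C})$ of Example \ref{coset quandle}(ii), and to deduce them from the preceding results. The preparatory step is the dictionary between $G$ and $Q$: the conjugation action of $G$ on $\mathcal{C}$ yields a group homomorphism $\rho\colon G\to\lmlt(Q)$ which is surjective (since $\langle\mathcal{C}\rangle=G$ and each $L_a$ lies in the image) with kernel $C_G(\mathcal{C})=Z(G)$. Since every $a\in\mathcal{C}$ is an involution we have $L_a^{-1}=L_a$, so $Q$ is involutory and $L_aL_b^{-1}=L_aL_b=\rho(ab)$; consequently $|L_aL_b|$ coincides with the order of $abZ(G)$ in $G/Z(G)\cong\lmlt(Q)$. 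Lemma \ref{disQ combinatorial} together with $L_a^2=1$ also shows that $\dis(Q)$ has index at most $2$ in $\lmlt(Q)$.

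For part (ii), the hypothesis says that $|L_aL_b|$ is odd for every $a,b\in Q$. Since $G$ is finite so is $Q$, and thus Corollary \ref{solvability}(ii) yields that $Q$ is solvable. Combining the equivalence between solvability of $Q$ and of $\dis(Q)$ (\cite[Theorem 1.2]{CP}, recalled in Section \ref{Sec:commutator}) with the index-two inclusion above, we obtain that $G/Z(G)\cong\lmlt(Q)$ is solvable; as $Z(G)$ is abelian it follows that $G$ itself is solvable.

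For part (i), the condition that no two elements of $\mathcal{C}$ commute translates to $Fix(L_a)=\{a\}$ for every $a\in\mathcal{C}$, which by Lemma \ref{superfQ} is exactly superfaithfulness of $Q$. Because $G$ is finite, $|L_aL_b|$ is finite for every pair, so Corollary \ref{nobu e russi} applies and forces $Q$ to be latin; Theorem \ref{involutory superfaith} then guarantees that every $|L_aL_b|$ is odd, which is precisely the hypothesis of part (ii). Thus (i) reduces to (ii). The only genuine obstacle is the translation step — pinning down $|L_aL_b|=|abZ(G)|$ and $\lmlt(Q)\cong G/Z(G)$ — after which each half is a one-line citation of a result already proved in this section.
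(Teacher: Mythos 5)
Your proposal is correct and takes essentially the same route as the paper: translate to the involutory quandle $Q=Conj(\mathcal{C})$ with $\lmlt(Q)\cong G/Z(G)$ and $|L_aL_b|=|abZ(G)|$, deduce (ii) from Corollary \ref{solvability} and (i) from superfaithfulness of $Q$ (Lemma \ref{superfQ}). The only cosmetic difference is in part (i): the paper concludes directly from ``superfaithful implies latin implies solvable'' (citing \cite[Lemma 6.1]{CP} for the passage from solvability of $Q$ to that of $G$), whereas you detour through Theorem \ref{involutory superfaith} to obtain odd orders and reduce (i) to (ii), and you unpack the solvability transfer explicitly via $\dis(Q)$ having index at most $2$ in $\lmlt(Q)$; both variants are valid.
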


%
%
%

\begin{proof}
The quandle $Q=Conj(\mathcal{C})$ is involutory and $\lmlt(Q)\cong G/Z(G)$. If $Q$ is solvable, e.g. if $Q$ is latin, then $G$ is also solvable \cite[Lemma 6.1]{CP}.

(i) If $\mathcal{C}$ has no commuting elements, then $Q$ is superfaithful and so latin. 

(ii) 
If $|L_a L_a|=|abZ(G)|$ is odd for every $a,b\in Q$, $Q$ is solvable by Corollary \ref{solvability}. 
\end{proof}

\subsection{Locally reductive involutory quandles}

In \cite{Orbits} we investigate several classes of quandles, including quandles with no connected subquandles (in some sense the dual class with respect to superconnected quandles). In the finite case, such class is defined by a set of identities. 

Let us define
 $$u_0(a,b)=a,\quad u_{n+1}(a,b)=u_n(a,b)*b.$$
 A quandle is said {\it $n$-locally reductive} if $u_n(a,b)=b$ for every $a,b\in Q$ (see \cite[Section 3.2]{Orbits}). According to the theory developed in \cite{Orbits}, for a finite quandle $Q$ the following properties are equivalent:
\begin{itemize}
\item[(i)] $Q$ is locally reductive.
\item[(ii)] $Q/\lambda_Q$ is locally reductive.
\item[(iii)] $Q$ has no (proper) connected subquandles. 
\item[(iv)] $\lmlt(Q)$ is nilpotent. 
\end{itemize} 
 In this section we offer a characterization of involutory quandles satisfying one of this conditions in terms of the properties of the canonical generators of the diplacement group.

\begin{lemma}\label{gen of dis2}
Let $Q$ be an involutory quandle, $a,b\in Q$ and $n\in \mathbb{N}$. Then $(L_a L_b)^{2^n}=L_{u_n(a,b)}L_b$.
\end{lemma}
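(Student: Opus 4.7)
The plan is to prove the identity $(L_a L_b)^{2^n} = L_{u_n(a,b)} L_b$ by induction on $n$. The base case $n=0$ is trivial, since $u_0(a,b) = a$ makes both sides equal to $L_a L_b$.

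For the inductive step, assuming $(L_a L_b)^{2^n} = L_{x} L_b$ with $x := u_n(a,b)$, I would square both sides to get
\[
(L_a L_b)^{2^{n+1}} = (L_x L_b)^2 = L_x L_b L_x L_b,
\]
and then massage the right-hand side into the form $L_{x\ast b} L_b = L_{u_{n+1}(a,b)} L_b$. The key ingredients are two identities available in any involutory quandle:
\begin{itemize}
\item left distributivity, which for quandles gives $L_x L_b L_x^{-1} = L_{x\ast b}$, equivalently $L_x L_b = L_{x\ast b} L_x$;
\item the involutory law $x\ast(x\ast y) \approx y$, which says $L_x^2 = 1$ for every $x$.
\end{itemize}
Applying the first identity to move $L_x$ past $L_b$ in $L_x L_b L_x L_b$ and then collapsing $L_x L_x = 1$ yields $L_{x\ast b} L_b$, as desired. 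Since by definition $x\ast b = u_n(a,b)\ast b = u_{n+1}(a,b)$, the induction closes.

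There is no real obstacle: the only subtlety is choosing the right order in which to apply conjugation by $L_x$ versus collapsing $L_x^2$. A slightly different order gives $L_x L_{b\ast x}$, which by a symmetric rewriting again equals $L_{x\ast b} L_b$, so the computation is quite forgiving. I would present the proof as a short induction with a two-line display for the inductive computation, citing only the involutory law and the conjugation formula $h L_a h^{-1} = L_{h(a)}$ noted in Section~\ref{Sec:Racks}.
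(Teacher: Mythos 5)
Your proof is correct and is essentially the paper's argument: the same induction on $n$, with the inductive step resting on the conjugation formula $L_{x*b}=L_xL_bL_x^{-1}$ combined with $L_x^2=1$ to rewrite $(L_{u_n(a,b)}L_b)^2$ as $L_{u_{n+1}(a,b)}L_b$. The paper merely runs the two-line computation in the reverse direction, starting from $L_{u_{n+1}(a,b)}L_b$ and expanding; no substantive difference.
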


\begin{proof}
For $n=0$, the statement is trivial. By induction
\begin{align*}
L_{u_{n+1}(a,b)}L_b &=
L_{u_{n}(a,b)*b}L_b=L_{u_n(a,b)}L_b L_{u_n(a,b)}L_b \\
&=(L_a L_b)^{2^n}(L_a L_b)^{2^n}=(L_a L_b)^{2^{n+1}}\qedhere
\end{align*}


\end{proof}

For involutory quandles, the property of being locally reductive is also determined by the properties of the canonical generators of the displacement group.

\begin{proposition}\label{reductive involutory}
Let $Q$ be an involutory quandle. Then $Q/\lambda_Q$ is $n$-locally reductive if and only if $(L_a L_b)^{2^n}=1$ every $a,b\in Q$.
\end{proposition}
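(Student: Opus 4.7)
The plan is to reduce everything to Lemma \ref{gen of dis2} together with the observation that, since $Q$ is involutory, $L_b^2=1$ for every $b\in Q$, so $L_b=L_b^{-1}$. Combined these give the clean equivalence
\[
(L_a L_b)^{2^n}=1 \quad\Longleftrightarrow\quad L_{u_n(a,b)}=L_b\quad\Longleftrightarrow\quad u_n(a,b)\,\lambda_Q\,b,
\]
which is exactly what we need on both sides of the statement.

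First I would note that in a quandle $\lambda_Q$ is a congruence: if $L_a=L_{a'}$ and $L_c=L_{c'}$, then $L_{a*c}=L_aL_cL_a^{-1}=L_{a'}L_{c'}L_{a'}^{-1}=L_{a'*c'}$, so $\lambda_Q$ is compatible with $*$ (and with $\backslash$ since $L_a$ is bijective). In particular the term $u_n$ descends, giving $[u_n(a,b)]_{\lambda_Q}=u_n([a]_{\lambda_Q},[b]_{\lambda_Q})$ in $Q/\lambda_Q$. Hence the condition ``$Q/\lambda_Q$ is $n$-locally reductive'' translates directly to ``$u_n(a,b)\,\lambda_Q\,b$ for every $a,b\in Q$,'' i.e.\ $L_{u_n(a,b)}=L_b$.

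For the forward direction assume $Q/\lambda_Q$ is $n$-locally reductive, so $L_{u_n(a,b)}=L_b$. By Lemma \ref{gen of dis2},
\[
(L_aL_b)^{2^n}=L_{u_n(a,b)}L_b=L_bL_b=L_b^{2}=1,
\]
the last equality using that $Q$ is involutory. For the converse assume $(L_aL_b)^{2^n}=1$ for every $a,b$. Lemma \ref{gen of dis2} yields $L_{u_n(a,b)}L_b=1$, hence $L_{u_n(a,b)}=L_b^{-1}=L_b$, so $u_n(a,b)\,\lambda_Q\,b$; passing to the quotient gives $u_n([a]_{\lambda_Q},[b]_{\lambda_Q})=[b]_{\lambda_Q}$, which is $n$-local reductivity of $Q/\lambda_Q$.

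There is really no main obstacle: the whole argument is an application of Lemma \ref{gen of dis2} combined with $L_b^2=1$, and the only thing that needs a short verification is the fact that $\lambda_Q$ is a congruence in the quandle setting (which follows from left distributivity in one line). No calculation with the explicit form of $u_n$ is needed.
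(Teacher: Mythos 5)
Your proof is correct and follows essentially the same route as the paper: identify $n$-local reductivity of $Q/\lambda_Q$ with the condition $L_{u_n(a,b)}=L_b$, then apply Lemma \ref{gen of dis2} together with $L_b^2=1$. The only addition is your explicit verification that $\lambda_Q$ is a congruence of a quandle, which the paper takes for granted in the rack setting; this is a harmless and correct supplement.
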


\begin{proof}
The quandle $Q/\lambda_Q$ is $n$-locally reductive if and only if 
$$L_{u_b(a,b)}=[u_n(a,b)]_{\lambda_Q}=u_n([a]_{\lambda_Q},[b]_{\lambda_Q})=[b]_{\lambda_Q}=L_b$$ for every $a,b\in Q$. By Lemma \ref{gen of dis2} we have that 
$$L_{u_b(a,b)}L_b=(L_{a} L_{b})^{2^n}$$ 
Therefore $L_{u_b(a,b)}=L_b$ if and only if $(L_{a} L_{b})^{2^n}=1$. \end{proof}

\begin{corollary}\label{reductive involutory 2}
An involutory quandle $Q$ is locally reductive if and only if there exists $n\in\mathbb{N}$ such that $(L_a L_b)^{2^n}=1$ for every $a,b\in Q$.
\end{corollary}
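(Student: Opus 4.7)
The plan is to derive the corollary directly from Proposition \ref{reductive involutory}, bridging the gap between $Q/\lambda_Q$ being locally reductive and $Q$ itself being so by means of a single extra iteration of the term $u_\cdot(a,b)$.

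For the forward direction, I would assume $Q$ is $n$-locally reductive for some $n$, i.e. $u_n(a,b) = b$ for every $a,b \in Q$. Since $u_n$ is a term operation and the canonical projection $Q \to Q/\lambda_Q$ is a quandle homomorphism, the quotient $Q/\lambda_Q$ is also $n$-locally reductive, and Proposition \ref{reductive involutory} immediately yields $(L_a L_b)^{2^n} = 1$ for every $a,b \in Q$.

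For the converse, I would suppose there exists $n \in \mathbb{N}$ with $(L_a L_b)^{2^n} = 1$ for every $a,b \in Q$. Proposition \ref{reductive involutory} then tells me $Q/\lambda_Q$ is $n$-locally reductive, which by the definition of $\lambda_Q$ amounts to the equality of left translations $L_{u_n(a,b)} = L_b$ in $Q$. The key observation is that one additional iteration of $u_\cdot$ collapses $\lambda_Q$-equivalence to strict equality, using the idempotent law $L_b(b) = b$:
\[
u_{n+1}(a,b) = u_n(a,b) \ast b = L_{u_n(a,b)}(b) = L_b(b) = b,
\]
so $Q$ is $(n+1)$-locally reductive and hence locally reductive.

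The only genuinely nontrivial step is that final calculation, which promotes $u_n(a,b)\,\lambda_Q\,b$ to the strict equality $u_{n+1}(a,b)=b$ by idempotence. This is the sole place where the distinction between $Q$ and $Q/\lambda_Q$ intervenes, so there is no real obstacle beyond assembling Proposition \ref{reductive involutory} with this one-step bridge.
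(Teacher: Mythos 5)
Your proof is correct and is essentially the intended derivation: the corollary is stated in the paper without proof as an immediate consequence of Proposition \ref{reductive involutory}, and your idempotence step $u_{n+1}(a,b)=L_{u_n(a,b)}(b)=L_b(b)=b$ is exactly the right bridge from $n$-local reductivity of $Q/\lambda_Q$ to $(n+1)$-local reductivity of $Q$. As a small bonus, your explicit argument establishes the equivalence of local reductivity for $Q$ and $Q/\lambda_Q$ without any finiteness hypothesis, whereas the paper only quotes that equivalence from \cite{Orbits} for finite quandles.
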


\begin{corollary}\label{group2}
Let $G$ be a finite group generated by a conjugacy class of involutions $\mathcal{C}$. Then $G$ is nilpotent if and only if there exists $n\in \mathbb{N}$ such that $|a b Z(G)|=2^n$ for every $a,b\in \mathcal{C}$.
\end{corollary}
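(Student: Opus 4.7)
The plan is to re-cast the statement about $G$ as one about the involutory quandle $Q = Conj(\mathcal{C})$, mirroring the deduction of Corollary \ref{group1}. Every element of $\mathcal{C}$ is an involution, so $Q$ is an involutory quandle, finite because $\mathcal{C}$ is. Since $\mathcal{C}$ generates $G$, the conjugation map $g \mapsto L_g$ has kernel equal to the centralizer of $\mathcal{C}$, which coincides with $Z(G)$, yielding an isomorphism $\lmlt(Q) \cong G/Z(G)$ under which the coset $abZ(G)$ corresponds to $L_aL_b$. In particular $|abZ(G)| = |L_aL_b|$ for every $a,b \in \mathcal{C}$.

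Next I would stitch together two equivalences. First, the equivalence from \cite{Orbits} recalled just before Lemma \ref{gen of dis2}: for a finite quandle $Q$, $Q$ is locally reductive if and only if $\lmlt(Q)$ is nilpotent. Second, Corollary \ref{reductive involutory 2}: for an involutory quandle $Q$, local reductivity is equivalent to the existence of $n \in \mathbb{N}$ with $(L_aL_b)^{2^n} = 1$ for all $a,b \in Q$. Translating through $\lmlt(Q) \cong G/Z(G)$, this last condition says $|abZ(G)|$ divides $2^n$ for every $a,b \in \mathcal{C}$, i.e.\ each $|abZ(G)|$ is a power of $2$ (finiteness of $\mathcal{C}$ makes any uniform bound automatic once each order is individually a power of $2$). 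Hence $G/Z(G)$ is nilpotent if and only if the condition of the corollary holds.

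To finish, I would invoke the classical fact that a finite group $G$ is nilpotent if and only if $G/Z(G)$ is nilpotent: one direction is immediate since nilpotency passes to quotients, and the converse follows by lifting the upper central series, so that nilpotency class $c$ of $G/Z(G)$ forces nilpotency class at most $c+1$ for $G$. Combining this with the equivalence from the previous paragraph yields exactly Corollary \ref{group2}. The argument is a chain of translations and I foresee no real obstacle; the only step requiring some care is the identification $\lmlt(Q) \cong G/Z(G)$, which genuinely uses the hypothesis that $\mathcal{C}$ generates $G$, since otherwise the centralizer of $\mathcal{C}$ could strictly exceed $Z(G)$ and the dictionary $|L_aL_b| = |abZ(G)|$ would break down.
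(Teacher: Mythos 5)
Your proposal is correct and follows essentially the same route as the paper: pass to $Q=Conj(\mathcal{C})$ with $\lmlt(Q)\cong G/Z(G)$, use the equivalence between local reductivity and nilpotence of $\lmlt(Q)$ together with Corollary \ref{reductive involutory 2}, and translate via $|L_aL_b|=|abZ(G)|$. You merely spell out a few steps the paper leaves implicit (the kernel of $g\mapsto L_g$ being $Z(G)$, and lifting nilpotence from $G/Z(G)$ to $G$).
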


\begin{proof}
The quandle $Q=Conj(\mathcal{C})$ is involutory and $\lmlt(Q)=G/Z(G)$. Therefore $G$ is nilpotent if and only if $Q$ is locally reductive. Hence, we can conclude by Corollary \ref{reductive involutory 2}, using that $|L_a L_b|=|abZ(G)|$ for every $a,b\in Q$.
%
\end{proof}

\bibliographystyle{plain}
\bibliography{references}

\end{document}